\documentclass[11pt,reqno]{amsart}

\usepackage{amsmath, amsthm, amscd, amsfonts, amssymb, graphicx, color}
\usepackage[bookmarksnumbered, colorlinks, plainpages]{hyperref}
\usepackage{mathrsfs}
\usepackage{geometry}
\usepackage{tikz}
\usepackage{tikz-cd}
\usepackage{pgfplots}
\usepackage[english]{babel}
\usepackage[T1]{fontenc}
\usepackage[utf8]{inputenc}

\newtheorem{theorem}{Theorem}[section]
\newtheorem{lemma}[theorem]{Lemma}
\newtheorem{proposition}[theorem]{Proposition}
\newtheorem{corollary}[theorem]{Corollary}
\newtheorem{remark}[theorem]{Remark}
\theoremstyle{definition}
\newtheorem{definition}[theorem]{Definition}
\newtheorem{example}[theorem]{Example}
\newtheorem{problem}[theorem]{Question}

\newcommand{\Cstar}{\mathrm{C}^*}

\newcommand{\T}{\mathrm{T}}

\newcommand{\id}{\operatorname{id}}

\title[Functoriality and Weyl Groupoids of Ample C*-Diagonal Pairs]{Functoriality and Weyl Groupoids of Ample C*-Diagonal Pairs}
\author{Ali Jabbari}
\address{Department of Applied Mathematics and Informatics, Kyrgyz-Turkish Manas University, Bishkek, Kyrgyzstan}
\email{ali.jabbari@manas.edu.kg\ \&\ jabbari\_al@yahoo.com}

\begin{document}

\subjclass[2020]{Primary: 46L05, 22A22; Secondary: 46L55, 46L35}
\keywords{C*-diagonal; Cartan subalgebra; ample groupoid; Weyl groupoid;
\'etale groupoid; morphism; unique extension property;
faithful conditional expectation; normalizer; invariant open sets;
groupoid reduction; twist}
\begin{abstract}
We initiate a functorial study of ample C$^*$-diagonal pairs and their Weyl groupoids, focusing on how certain well-behaved $*$-homomorphisms induce geometric maps between the associated groupoids. Given a morphism between diagonal pairs satisfying compatibility conditions with the diagonal and the canonical conditional expectations, we construct an induced partial morphism between the associated Weyl groupoids and analyze its properties. This provides a way to transfer certain structural information between Cartan-type inclusions. As applications, we study the behaviour of expectation-compatible ideals, faithful conditional expectations, and dynamical comparison under diagonal-preserving morphisms. We further investigate tensor products of ample C$^*$-diagonal pairs and prove that the Weyl groupoid of a tensor product is naturally identified with the product of the corresponding Weyl groupoids. Under suitable hypotheses, we obtain a subadditivity result for diagonal dimension via dynamic asymptotic dimension. We also prove that the Weyl functor is faithful on a natural subcategory of \emph{untwisted} pairs, providing a concrete invariant that distinguishes non-isomorphic diagonal pairs. The theory is illustrated through examples arising from AF algebras, graph C$^*$-algebras, crossed products, and recent constructions of exotic diagonals in UHF and Cuntz algebras.
\end{abstract}
\maketitle

\section{Introduction}

Cartan subalgebras of \(\mathrm{C}^*\)-algebras, introduced by Kumjian and refined by Renault, play a role in the structure and classification of \(\mathrm{C}^*\)-algebras. A unital inclusion \((D \subset A)\) is called a \emph{\(\mathrm{C}^*\)-diagonal pair} if \(D\) is a maximal abelian, regular subalgebra of \(A\) possessing the unique extension property (UEP). Renault's celebrated reconstruction theorem shows that every such pair arises from a twisted \'etale groupoid \((\mathcal{G}, \Sigma)\) with topologically principal \(\mathcal{G}\), and that the pair is \emph{ample} precisely when the unit space is totally disconnected.

In recent years, Kopsacheilis and Winter developed a systematic theory of ample \(\mathrm{C}^*\)-diagonal pairs, establishing deep results on dynamical comparison, tracial almost divisibility, and the equivalence between diagonal comparison and the conjunction of dynamical comparison and strict comparison~\cite{KW}. However, the functorial behavior of such pairs under \(*\)-homomorphisms — particularly those that respect the Cartan structure — has remained largely unexplored. Understanding this behavior is essential for applications to classification, rigidity, and the construction of exotic diagonals in tensor products and inductive limits.

Very recently, Sibbel and Winter \cite{SibbelWinter2024} constructed an explicit C*-diagonal with Cantor spectrum in the Cuntz algebra \(\mathcal{O}_2\), answering a long-standing question. Their construction proceeds via crossed products by minimal homeomorphisms on Cantor spaces and infinite tensor products, ultimately relying on Kirchberg's \(\mathcal{O}_2\)-absorption theorem. The present work provides a complementary perspective: the functorial framework developed here (specifically, Theorem~\ref{thm:partial-groupoid} and Theorem~\ref{thm:geom-ideals}) offers a systematic language for understanding how such diagonals behave under homomorphisms and quotients.

Additionally, Kopsacheilis and Winter \cite{KopsacheilisWinter2025} have constructed, for each \(n \in \{0,1,2,\ldots,\infty\}\), a C*-diagonal in the CAR algebra \(M_{2^\infty}\) with Cantor spectrum and diagonal dimension exactly \(n\), showing in particular that non-AF diagonals with amenable Weyl groupoids exist. These examples provide natural test cases for the functorial techniques developed in Section~\ref{sec:partial}.

This paper addresses three interrelated themes:
\begin{enumerate}
    \item \textbf{Quotient permanence.} We characterize when a surjective \(*\)-homomorphism \(\Phi : A \to B\) between ample \(\mathrm{C}^*\)-diagonal pairs yields a new such pair \(( \Phi(D) \subset B )\). In particular, we characterize the ideals \(I \triangleleft A\) for which the quotient \((D/(I\cap D) \subset A/I)\) is again an ample C*-diagonal pair in terms of the geometry of the Weyl groupoid (Theorem~\ref{thm:geom-ideals}).

    \item \textbf{Geometric realization of quotient diagonals.} Using the Exel--Pitts ideal structure theorem, we prove that for an \emph{untwisted} ample \(\mathrm{C}^*\)-diagonal pair \(A \cong C^*_r(\mathcal{G})\), there is a bijection between closed ideals generated by their intersection with the diagonal and open invariant subsets of \(\mathcal{G}^{(0)}\). The quotient pair corresponds to the restriction of \(\mathcal{G}\) to the complement, yielding an untwisted diagonal pair precisely when the reduction is topologically principal.

    \item \textbf{Partial functoriality of Weyl groupoids.} Let \(\Phi : (A,D) \to (B,C)\) be a unital \(*\)-homomorphism between ample \(\mathrm{C}^*\)-diagonal pairs such that \(\Phi|_D : D \to C\) is an isomorphism. We construct an open subgroupoid \(\mathcal{H}_\Phi \subseteq \mathcal{G}_B\) and a continuous, open, injective groupoid homomorphism \(\rho_\Phi : \mathcal{H}_\Phi \longrightarrow \mathcal{G}_A\) that extends the induced homeomorphism on unit spaces. This result provides a concrete functorial link between the groupoid models, which is not automatic from Renault's theory.
\end{enumerate}

The central theme of the paper is that ample \(\mathrm{C}^*\)-diagonal pairs exhibit structured behavior under quotient constructions and diagonal-preserving morphisms, and that this behavior admits a natural description in terms of partial groupoid morphisms. The associated Weyl groupoids interact geometrically through reductions and induced subgroupoid morphisms, unifying the three themes above and highlighting the interplay between algebraic and groupoid dynamics. We develop the categorical framework for such pairs in Proposition~\ref{prop:category} and further establish functorial properties in Theorem~\ref{thm:weyl-functor}, Proposition~\ref{prop:monoidal}, and Theorem~\ref{thm:reconstruction}.

\begin{remark}\label{rem:scope}
The present paper is foundational in nature. While we establish a functor \(\mathcal{W}: \mathcal{C}_{\mathrm{amp}}^{\mathrm{iso}} \to \mathsf{Grpd}_{\mathrm{part}}\) and prove several permanence properties, we do \emph{not} claim that this functor is an equivalence or that it yields a complete classification. Rather, we develop the necessary categorical and geometric language to study diagonal-preserving morphisms systematically. A concrete application demonstrating the usefulness of this invariant for \emph{untwisted} pairs is given in Section~\ref{sec:rigidity}, where we show that the Weyl functor distinguishes the exotic diagonals of Kopsacheilis and Winter. Further applications remain a direction for future work.
\end{remark}

After recalling necessary preliminaries (Section~\ref{sec:prelim}), we define morphisms of ample \(\mathrm{C}^*\)-diagonal pairs and establish basic transfer properties (Section~\ref{sec:transfer}) and study quotients that are again groupoid \(\mathrm{C}^*\)-algebras. Section~\ref{sec:completion} gives a geometric characterization of quotient diagonals via invariant open sets, under the assumption of untwistedness. Section~\ref{sec:partial} contains the main functoriality result: the construction of \(\mathcal{H}_\Phi\) and \(\rho_\Phi\), valid in the twisted or untwisted setting. Section~\ref{sec:tensor-products} provides examples and applications, Section~\ref{sec:rigidity} proves faithfulness of the Weyl functor for untwisted principal pairs, and Section~\ref{sec:open} lists open problems.

Throughout, all \(\mathrm{C}^*\)-algebras are separable and unital. Groupoids are second-countable, locally compact, Hausdorff, \'etale, with compact unit space. A \(\mathrm{C}^*\)-diagonal pair \((D \subset A)\) is called \emph{ample} if \(\operatorname{Spec}(D)\) is totally disconnected.

\section{Preliminaries}\label{sec:prelim}

\subsection{Groupoid conventions}
For a second‑countable locally compact Hausdorff étale groupoid \(\mathcal G\):
\begin{itemize}
    \item[(i)] \emph{topologically principal} (also called \emph{essentially principal}) means that the set of units with trivial isotropy is dense in \(\mathcal G^{(0)}\).
    \item[(ii)] \emph{effective} means that the interior of the isotropy bundle \(\mathcal G' = \{g \in \mathcal G : s(g)=r(g)\}\) equals \(\mathcal G^{(0)}\).
\end{itemize}

It is well known that topological principality implies effectiveness
\cite[Proposition~3.6]{Renault}. The converse may fail for general
étale groupoids, but for second countable Hausdorff étale groupoids the
two notions coincide for those arising from C*-diagonal pairs (which satisfy the UEP). In particular, the Weyl groupoids arising from
the ample C$^*$-diagonal pairs considered in this paper are
topologically principal, and hence effective.

\begin{remark}\label{rem:principal-effective}
For a general étale groupoid, effectiveness is strictly weaker than topological principality. However, for the Weyl groupoids arising from C*-diagonal pairs, the unique extension property (UEP) forces topological principality (see \cite{Renault}). Consequently, these groupoids are also effective by \cite[Proposition~3.6]{Renault}. This equivalence justifies our occasional interchangeable use of the two terms in the sequel. For general étale groupoids not arising from C*-diagonal pairs, one must distinguish these notions carefully. In particular, all results from \cite{BL} that require effectiveness apply directly to our setting.
\end{remark}

\begin{remark}\label{rem:twist-vs-principality}
A crucial subtlety: \emph{topological principality} of \(\mathcal{G}\) does \textbf{not} imply that the Kumjian–Renault twist is trivial. A principal groupoid (all isotropy trivial) may still carry a nontrivial twist — a 2-cocycle with values in \(\mathbb{T}\) representing a nontrivial class in \(H^2(\mathcal{G}, \mathbb{T})\). The resulting twisted groupoid C*-algebra \(C_r^*(\mathcal{G}, \Sigma)\) is then a Cartan pair that is \emph{not} isomorphic to \(C_r^*(\mathcal{G})\).

In this paper, we say a C*-diagonal pair is \textbf{untwisted} if its associated twist is trivial (cohomologous to 1), i.e., \(A \cong C_r^*(\mathcal{G})\) with \(\mathcal{G}\) ample and topologically principal. \emph{Principality alone does not imply untwistedness}. Our results in Sections~\ref{sec:completion} and~\ref{sec:rigidity} assume untwistedness explicitly; the partial groupoid morphism construction in Section~\ref{sec:partial} and the Weyl functor in Section~\ref{sec:rigidity} are valid in the twisted setting as well, but faithfulness of the functor holds only for untwisted principal pairs (see Remark~\ref{rem:faithfulness-twist}).
\end{remark}

\subsection{Reconstruction tools and their logical dependencies}
\begin{itemize}
    \item \textbf{Kumjian~\cite[Theorem~3.1]{Kumjian}}: Every $C^*$-diagonal pair $(D\subset A)$ gives rise to a twist $\Sigma$ over a principal groupoid $R$ such that $A\cong C_r^*(R,\Sigma)$ and $D\cong C(R^{(0)})$. Conversely, every such twist yields a $C^*$-diagonal pair.
    \item \textbf{Renault~\cite[Theorem~4.1]{Renault}}: If $\mathcal G$ is second‑countable, locally compact Hausdorff étale and topologically principal, then $(C_0(\mathcal G^{(0)})\subseteq C_r^*(\mathcal G))$ is a Cartan pair. Every Cartan pair arises from a twisted groupoid in this way. When the Cartan pair additionally has the UEP, it is a $C^*$-diagonal pair.
    \item \textbf{Bönicke–Li \cite[Theorem 3.10 and Corollary 3.12]{BL}}: For a Hausdorff étale effective groupoid, every closed ideal of $C_r^*(\mathcal G)$ is generated by its intersection with $C_0(\mathcal G^{(0)})$. This holds for the reduced C*-algebra of a \emph{twisted} groupoid as well, because effectiveness is preserved under twisting.
    \item A regular abelian subalgebra with UEP is automatically maximal abelian (see \cite[Proposition~4]{Kumjian}); this follows from the definition of a diagonal and the fact that a faithful conditional expectation with UEP forces the relative commutant to be contained in the subalgebra.
\end{itemize}

All groupoids we encounter from unital \(\Cstar\)-diagonal pairs have compact unit space, so \(C_0\) becomes \(C\).

\begin{definition}
A \emph{\(C^*\)-diagonal pair} is a unital inclusion \((D \subseteq A)\) together with a faithful conditional expectation $E_A:A\to D$ such that
\begin{itemize}
    \item[(i)] \(D\) is a maximal abelian subalgebra of \(A\),
    \item[(ii)] \(D\) is regular, i.e.\ \(C^*(\mathcal N_A(D))=A\), where
    \[
    \mathcal N_A(D) := \{n\in A : nDn^*\cup n^*Dn\subseteq D\},
    \]
    \item[(iii)] \(D\) has the unique extension property (UEP): every pure state on \(D\) extends uniquely to a state on \(A\).
\end{itemize}

The pair is \emph{ample} if \(\operatorname{Spec}(D)\) is totally disconnected.
If \(A\) is separable, Kumjian's reconstruction \cite[Theorem~3.1]{Kumjian} (or Renault's reconstruction \cite{Renault1980} together with the UEP) yields a twist over an ample principal groupoid. The Weyl groupoid is topologically principal, hence effective.
\end{definition}

\begin{definition}\label{def:morphism}
Let $(A,D,E_A)$ and $(B,C,E_B)$ be ample $C^*$-diagonal pairs.
A \emph{morphism}
\[
\Phi:(A,D,E_A)\to(B,C,E_B)
\]
is a unital $*$-homomorphism $\Phi:A\to B$ satisfying:
\begin{enumerate}
    \item[(D)] $\Phi(D)\subseteq C$;
    \item[(E)] $\Phi\circ E_A = E_B\circ\Phi$;
    \item[(N)] $\Phi(\mathcal N_A(D)) \subseteq \mathcal N_B(C)$, and the restriction $\Phi|_{\mathcal N_A(D)}$ is injective.
\end{enumerate}
\end{definition}

\begin{remark}
When $\Phi$ is surjective and $\Phi|_D$ is an isomorphism, condition (N) does not simplify to $\Phi(\mathcal N_A(D)) = \mathcal N_B(C)$ in any nontrivial way. Indeed, if $\ker\Phi\cap D=\{0\}$, then by the ideal structure theorem (Lemma~\ref{lem:ideal-structure}), $\ker\Phi=\{0\}$, so $\Phi$ is an isomorphism. Thus the only surjective morphisms with $\Phi|_D$ an isomorphism are isomorphisms. This is a consequence of the fact that ideals of $C_r^*(\mathcal G)$ are generated by their intersection with the diagonal.
\end{remark}

\begin{remark}\label{rem:condition-N-independence}
Condition (N) is not automatic from (D) and (E). To see this, let $A = C([0,1]) \otimes M_2(\mathbb{C})$ with diagonal $D = C([0,1]) \otimes (\mathbb{C} \oplus \mathbb{C})$, and let $B = C([0,1])$ with diagonal $C = B$. Define $\Phi: A \to B$ by
\[
\Phi(f \otimes \begin{pmatrix} a & b \\ c & d \end{pmatrix}) = f \cdot a.
\]
Then (D) holds because $\Phi(D) = C([0,1]) = C$. The conditional expectations are compatible: for $a \in A$,
\[
\Phi(E_A(a)) = \Phi(f \otimes \begin{pmatrix} a & 0 \\ 0 & d \end{pmatrix}) = f \cdot a = E_B(\Phi(a)).
\]
However, the normalizer $n = 1 \otimes \begin{pmatrix} 0 & 1 \\ 1 & 0 \end{pmatrix}$ satisfies $\Phi(n) = 0$, violating injectivity on normalizers. Thus (N) fails.
\end{remark}

\begin{proposition}[Category of ample diagonal pairs]\label{prop:category}
Ample \(C^*\)-diagonal pairs together with the morphisms defined in Definition~\ref{def:morphism} form a category \(\mathcal{C}_{\mathrm{amp}}\).
\end{proposition}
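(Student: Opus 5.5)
We verify the category axioms directly: objects are the triples $(A,D,E_A)$, morphisms are the maps of Definition~\ref{def:morphism}, and composition is ordinary composition of $*$-homomorphisms. Since composition of functions is associative, associativity in $\mathcal{C}_{\mathrm{amp}}$ is automatic once composition is well defined. So the only substance is to check two things: identities are morphisms, and the composite of two morphisms satisfies (D), (E), and (N).

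\textbf{Identities.} For an object $(A,D,E_A)$, the map $\id_A$ is a unital $*$-homomorphism. It satisfies $\id_A(D)=D$ and $\id_A\circ E_A=E_A\circ\id_A$. It maps $\mathcal N_A(D)$ onto itself, and its restriction there is injective. Hence $\id_A$ is a morphism. Because composition is composition of functions, $\id_B\circ\Phi=\Phi=\Phi\circ\id_A$.

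\textbf{Composition.} Let $\Phi:(A,D,E_A)\to(B,C,E_B)$ and $\Psi:(B,C,E_B)\to(B',C',E_{B'})$ be morphisms. Then $\Psi\circ\Phi$ is a unital $*$-homomorphism, and we check each condition in turn.
\begin{enumerate}
\item[(D)] We have $\Psi\Phi(D)\subseteq\Psi(C)\subseteq C'$.
\item[(E)] We compute
\[
\Psi\circ\Phi\circ E_A=\Psi\circ E_B\circ\Phi=E_{B'}\circ\Psi\circ\Phi .
\]
\item[(N)] By (N) for $\Phi$, we have $\Phi(\mathcal N_A(D))\subseteq\mathcal N_B(C)$. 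Applying (N) for $\Psi$ gives $\Psi\Phi(\mathcal N_A(D))\subseteq\Psi(\mathcal N_B(C))\subseteq\mathcal N_{B'}(C')$. For injectivity, take $n,m\in\mathcal N_A(D)$ with $\Psi\Phi(n)=\Psi\Phi(m)$. Both $\Phi(n)$ and $\Phi(m)$ lie in $\mathcal N_B(C)$, where $\Psi$ is injective, so $\Phi(n)=\Phi(m)$. Injectivity of $\Phi$ on $\mathcal N_A(D)$ then gives $n=m$.
\end{enumerate}

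The one point needing care is the injectivity clause in (N). Injectivity of $\Psi$ is only assumed on $\mathcal N_B(C)$, not on all of $B$. So the argument must use the inclusion $\Phi(\mathcal N_A(D))\subseteq\mathcal N_B(C)$ before invoking $\Psi$'s injectivity, and it must appeal to injectivity only on normalizer sets, never on all of $A$ or $B$, as done above.

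Finally, ampleness is a property of the objects alone, so it is untouched by these verifications. Hence $\mathcal{C}_{\mathrm{amp}}$ is a category.
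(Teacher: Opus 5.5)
Your proof is correct and follows the same route as the paper: check that (D), (E) and (N) are stable under composition, that identities are morphisms, and that associativity is inherited from composition of maps. It is in fact slightly more complete than the paper's argument, which checks only that normalizers are sent to normalizers and omits the injectivity clause of (N); your step of first using \(\Phi(\mathcal N_A(D))\subseteq\mathcal N_B(C)\) and only then invoking injectivity of \(\Psi\) on \(\mathcal N_B(C)\) is exactly what that clause requires.
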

\begin{proof}
Let
\[
\Phi:(A,D_A)\to (B,D_B)
\quad\text{and}\quad
\Psi:(B,D_B)\to (C,D_C)
\]
be morphisms. Since both
$\Phi$ and $\Psi$ are unital $*$-homomorphisms, so is the composition
$\Psi\circ\Phi$.

Moreover, conditions {\rm(D)}, {\rm(E)}, and {\rm(N)} are preserved
under composition. Indeed,
\[
(\Psi\circ\Phi)(D_A)
\subseteq
\Psi(D_B)
\subseteq
D_C,
\]
so {\rm(D)} holds. Compatibility with the conditional expectations
follows from
\[
E_C\circ (\Psi\circ\Phi)
=
(E_C\circ\Psi)\circ\Phi
=
(\Psi\circ E_B)\circ\Phi
=
\Psi\circ(E_B\circ\Phi)
=
\Psi\circ(\Phi\circ E_A)
=
(\Psi\circ\Phi)\circ E_A,
\]
establishing {\rm(E)}. Finally, if $n\in N_A(D_A)$, then
\[
\Phi(n)\in N_B(D_B)
\quad\text{and hence}\quad
\Psi(\Phi(n))\in N_C(D_C),
\]
so {\rm(N)} also holds.

The identity map on any ample $C^*$-diagonal pair trivially satisfies
{\rm(D)}, {\rm(E)}, and {\rm(N)}. Associativity of composition follows
from associativity of composition of $*$-homomorphisms. Therefore the
collection of ample $C^*$-diagonal pairs together with these morphisms
forms a category.
\end{proof}

\section{Transfer of the faithful conditional expectation and dynamical comparison}\label{sec:transfer}

The purpose of this section is to connect the functorial behaviour of C*-diagonal pairs with dynamical properties of their Weyl groupoids. We show that, under suitable hypotheses, faithful conditional expectations can be transferred along split diagonal-preserving morphisms. We also discuss the potential for transferring dynamical comparison via geometric reductions, while being careful about the scope of available permanence results.

\begin{proposition}\label{prop:hered-push}
Let \(\Phi:(A,D,E_A)\to (B,C,E_B)\) be a morphism of \(C^*\)-diagonal pairs. Assume that:
\begin{itemize}
    \item[(i)] \(\Phi\) is surjective,
    \item[(ii)] there exists a \(*\)-homomorphism \(\Psi:B\to A\) such that \(\Phi\circ \Psi = \id_B\),
    \item[(iii)] \(E_A\circ \Psi = \Psi\circ E_B\).
\end{itemize}

If \(E_A\) is faithful, then \(E_B\) is faithful.
\end{proposition}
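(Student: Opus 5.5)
The plan is a direct positivity argument that moves the faithfulness question from $B$ to $A$ along the splitting $\Psi$. Fix a positive element $b\in B_+$ with $E_B(b)=0$; the goal is to show $b=0$. Because $\Psi$ is a $*$-homomorphism, $a:=\Psi(b)$ is positive in $A$. By hypothesis (iii),
\[
E_A(a)=E_A(\Psi(b))=\Psi(E_B(b))=\Psi(0)=0 .
\]
Faithfulness of $E_A$ then forces $a=0$, i.e.\ $\Psi(b)=0$. Applying $\Phi$ and using (ii) gives $b=\Phi(\Psi(b))=\Phi(0)=0$, so $E_B$ is faithful.

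The key steps, in order, are therefore:
\begin{enumerate}
\item positivity of $\Psi(b)$, which holds because $\Psi$ is a $*$-homomorphism;
\item the intertwining relation (iii), which sends $E_B(b)=0$ to $E_A(\Psi(b))=0$;
\item faithfulness of $E_A$, which gives $\Psi(b)=0$;
\item the splitting $\Phi\circ\Psi=\id_B$, which shows $\Psi$ is injective and hence $b=0$.
\end{enumerate}
Hypothesis (i) is automatic from (ii), and neither (E) nor (D) for $\Phi$ is actually needed. I would note this in a remark rather than use it.

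There is no genuine obstacle; the only point needing care is the definition of faithfulness. The argument should be phrased for positive elements, or equivalently via $E_B(x^*x)=0\Rightarrow x=0$, applied to $b=x^*x$. This formulation is needed so that step~3 can be invoked, since faithfulness of $E_A$ is a statement about positive elements only.
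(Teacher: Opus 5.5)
Your proposal is correct and matches the paper's proof step for step: pass to $\Psi(b)\ge 0$, use (iii) to get $E_A(\Psi(b))=\Psi(E_B(b))=0$, conclude $\Psi(b)=0$ by faithfulness of $E_A$, and apply $\Phi$ to get $b=0$. Your side remarks are also accurate: (i) follows from (ii), and conditions (D) and (E) on $\Phi$ are never used.
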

\begin{proof}
If \(E_B(b)=0\) for a positive \(b\in B\), then \(\Psi(b)\ge0\) and \(E_A(\Psi(b)) = \Psi(E_B(b)) = 0\). Since \(E_A\) is faithful, \(\Psi(b)=0\), and applying \(\Phi\) yields \(b=0\).
\end{proof}

\begin{definition}\cite[Definition~1.5]{KW}\label{def:dyn-comp}
For a \(\Cstar\)-diagonal pair \((D\subset A)\) with \(\T(A)\neq\emptyset\), for projections \(p,q\in D\) write \(p\preceq_N q\) if there exists \(n\in\mathcal N_A(D)\) with \(n^*n=p,\; nn^*\le q\). The pair has \emph{dynamical comparison} if \(\tau(p)<\tau(q)\) for all \(\tau\in\T(A)\) implies \(p\preceq_N q\).
\end{definition}

\begin{remark}\label{rem:dyncomp-transfer}
A natural setting for potentially transferring dynamical comparison is via the geometric correspondence in Theorem~\ref{thm:geom-ideals}. If \((D\subset A)\) has dynamical comparison and \(U\subseteq\mathcal G^{(0)}\) is an open invariant subset such that the reduction \(\mathcal G|_{\mathcal G^{(0)}\setminus U}\) is topologically principal, then the quotient pair \((D/(I\cap D)\subseteq A/I)\) with \(I=C_r^*(\mathcal G|_U)\) has Weyl groupoid \(\mathcal G|_{\mathcal G^{(0)}\setminus U}\). This suggests a mechanism for transferring dynamical comparison to the quotient, since reductions provide a natural geometric setting in which one may investigate permanence of comparison properties. However, precise permanence properties depend on the specific hypotheses in Kopsacheilis--Winter \cite{KW} and are not needed for the main results of the present paper. The question of whether dynamical comparison is preserved under arbitrary quotient morphisms remains an open problem (see Question~\ref{prob:transfer-diagonal-comparison}).
\end{remark}

\begin{lemma}\label{lem:ideal-structure}
Let \(\mathcal G\) be an ample Hausdorff étale groupoid that is effective (in particular, if \(\mathcal G\) is topologically principal). Then every closed two-sided ideal \(I\triangleleft C_r^*(\mathcal G)\) is generated by its intersection with \(C_0(\mathcal G^{(0)})\). Moreover, there is a bijection between open invariant subsets \(U \subseteq \mathcal G^{(0)}\) and closed ideals \(I \triangleleft C_r^*(\mathcal G)\) given by \(U \mapsto C_r^*(\mathcal G|_U)\), with \(I \cap C_0(\mathcal G^{(0)}) = C_0(U)\).
\end{lemma}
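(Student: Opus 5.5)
The plan is to take the first assertion directly from the cited literature and then build the bijection by hand out of the ideal-intersection map $I\mapsto I\cap C_0(\mathcal G^{(0)})$. Since $\mathcal G$ is Hausdorff, étale and effective (topological principality implies effectiveness by \cite[Proposition~3.6]{Renault}), \cite[Theorem~3.10 and Corollary~3.12]{BL} gives that every closed ideal $I\triangleleft C_r^*(\mathcal G)$ is generated by $I\cap C_0(\mathcal G^{(0)})$. I will treat this as the main input and not reprove it. It is the genuinely hard step, resting on the fact that $C_0(\mathcal G^{(0)})$ detects ideals (the intersection property) in the effective case. I also record that the bijection can only be expected because of this generation statement; it is the whole reason injectivity of $I\mapsto I\cap C_0(\mathcal G^{(0)})$ holds.

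Next I will show that, for an open invariant $U$, the set $I_U:=C_r^*(\mathcal G|_U)$ is a closed ideal with $I_U\cap C_0(\mathcal G^{(0)})=C_0(U)$. I will realise $I_U$ as the closure of $C_c(\mathcal G|_U)\subseteq C_c(\mathcal G)$. Invariance of $U$ makes $\mathcal G|_U=r^{-1}(U)=s^{-1}(U)$ open, so that $C_c(\mathcal G)*C_c(\mathcal G|_U)$ and $C_c(\mathcal G|_U)*C_c(\mathcal G)$ are again supported in $\mathcal G|_U$; this gives the ideal property. For the intersection, I will use the canonical expectation $E$, which is restriction to $\mathcal G^{(0)}$. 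It maps $C_c(\mathcal G|_U)$ into $C_c(U)$, hence by continuity maps $I_U$ into $C_0(U)$. If $d\in I_U\cap C_0(\mathcal G^{(0)})$, then $d=E(d)\in C_0(U)$, and the reverse inclusion is clear. It follows that $I_U$ equals the ideal generated by $C_0(U)$. Indeed, this generated ideal lies in $I_U$; conversely, every $f\in C_c(\mathcal G|_U)$ is a finite sum of functions supported on compact open bisections inside $\mathcal G|_U$, and each such function equals $f\cdot 1_{s(\mathrm{supp} f)}$ with $1_{s(\mathrm{supp} f)}\in C_c(U)$, using ampleness.

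Then I will prove that $U\mapsto I_U$ is a bijection. Injectivity is immediate from $I_U\cap C_0(\mathcal G^{(0)})=C_0(U)$, since $C_0(U)$ determines $U$. For surjectivity, take a closed ideal $I$. The set $J:=I\cap C_0(\mathcal G^{(0)})$ is a closed ideal of $C_0(\mathcal G^{(0)})$, so $J=C_0(U)$ for a unique open $U$. To see that $U$ is invariant, take $g$ with $s(g)\in U$. Choose a compact open bisection $S\ni g$, and choose $f\in C_0(U)$ with $f(s(g))\neq 0$. Then $1_S f 1_S^*\in I\cap C_0(\mathcal G^{(0)})$ is the function $x\mapsto f(\alpha_S^{-1}(x))$ on $r(S)$, which is nonzero at $r(g)$. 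Hence $r(g)\in U$. Finally, by the first assertion $I$ is generated by $J=C_0(U)$, and by the previous paragraph this ideal is $I_U$.

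The step I expect to be the main obstacle is the first one, the generation statement, and I will rely entirely on \cite{BL} for it. I will add a remark that the identification of the generated ideal with $C_r^*(\mathcal G|_U)$ is the point where one must be careful. If any exactness issue arises there, I would fall back on defining $I_U$ as the ideal generated by $C_0(U)$, which makes the bijection hold verbatim.
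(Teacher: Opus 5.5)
Your overall route is the paper's: cite Bönicke--Li for generation, identify ideals of $C_0(\mathcal G^{(0)})$ with open sets, and identify $\langle C_0(U)\rangle$ with $C_r^*(\mathcal G|_U)$. Several of your steps are correct and more carefully argued than in the paper:
\begin{itemize}
\item $I_U$ is an ideal;
\item $E(I_U)\subseteq C_0(U)$, which gives $I_U\cap C_0(\mathcal G^{(0)})=C_0(U)$;
\item $I_U$ is generated by $C_0(U)$ (use $1_{s(S)}$ for the compact open bisection $S$, not $1_{s(\operatorname{supp} f)}$, which need not be continuous);
\item injectivity of $U\mapsto I_U$;
\item invariance of $U$ via $1_S f 1_S^*$.
\end{itemize}

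The genuine gap is your first step, and it cannot be filled, because the generation claim is false for effective groupoids in general. Effectiveness gives only the intersection property: every nonzero ideal meets $C_0(\mathcal G^{(0)})$ nontrivially. That does not make an ideal generated by its intersection with the diagonal. Here is a counterexample. Let $X=\mathbb Z\cup\{\infty\}$ be the one-point compactification, with $\mathbb Z$ acting by translation and fixing $\infty$. Then $\mathcal G=X\rtimes\mathbb Z$ is ample, Hausdorff, étale and topologically principal, since the dense set $\mathbb Z\subseteq X$ has trivial isotropy. Its only open invariant sets are $\varnothing,\mathbb Z,X$. Since $\mathcal G$ is amenable, there is an extension
\[
0\longrightarrow C_0(\mathbb Z)\rtimes\mathbb Z\cong\mathcal K(\ell^2(\mathbb Z))\longrightarrow C(X)\rtimes\mathbb Z\xrightarrow{\ \pi\ } C^*(\mathbb Z)\cong C(\mathbb T)\longrightarrow 0 .
\]
Take any open $V$ with $\varnothing\neq V\subsetneq\mathbb T$. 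Because $\pi(f)=f(\infty)1$ for $f\in C(X)$, the ideal $\pi^{-1}(C_0(V))$ meets $C(X)$ exactly in $C_0(\mathbb Z)$. Yet it strictly contains $\mathcal K(\ell^2(\mathbb Z))$, which is the ideal generated by $C_0(\mathbb Z)$. So both the generation claim and surjectivity of $U\mapsto C_r^*(\mathcal G|_U)$ fail. The paper's proof rests on the same citation under the same hypothesis, so the defect lies in the lemma as stated, not only in your write-up.

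The generation statement needs two further hypotheses: $\mathcal G$ must be inner exact (e.g.\ amenable), and $\mathcal G|_Y$ must be effective for every closed invariant $Y\subseteq\mathcal G^{(0)}$ (Renault's ``essentially principal''). Under these hypotheses your surjectivity step closes directly:
\begin{itemize}
\item Let $I\cap C_0(\mathcal G^{(0)})=C_0(U)$ and $Y=\mathcal G^{(0)}\setminus U$. You already have $I_U\subseteq I$.
\item Inner exactness gives a quotient map $q\colon C_r^*(\mathcal G)\to C_r^*(\mathcal G|_Y)$ with kernel $I_U$.
\item If $a\in I$ and $q(a)=q(d)$ with $d\in C_0(\mathcal G^{(0)})$, then $a-d\in I_U\subseteq I$. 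Hence $d\in C_0(U)$ and $q(d)=0$, so $q(I)\cap C_0(Y)=0$.
\item Effectiveness of $\mathcal G|_Y$ then forces $q(I)=0$, so $I=I_U$.
\end{itemize}

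Your proposed fallback targets the wrong place. The identification of $\langle C_0(U)\rangle$ with $C_r^*(\mathcal G|_U)$ is unproblematic; you proved it yourself. Exactness enters at the quotient $C_r^*(\mathcal G)/I_U\cong C_r^*(\mathcal G|_Y)$. Redefining $I_U$ does nothing for surjectivity, which is precisely the generation statement.
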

\begin{proof}
By \cite[Theorem~3.10 and Corollary~3.12]{BL}, since $\mathcal G$ is ample, Hausdorff, étale, and effective, every closed ideal
$I\triangleleft C_r^*(\mathcal G)$ is generated by its intersection with the diagonal algebra $C_0(\mathcal G^{(0)})$.
Now ideals of the commutative algebra $C_0(\mathcal G^{(0)})$ are precisely of the form $C_0(U)$, where $U\subseteq \mathcal G^{(0)}$ is open. Moreover, the ideal generated by $C_0(U)$ coincides with $C_r^*(\mathcal G|_U)$ (see \cite[Proposition~3.9]{BL}), and $U$ must be invariant. Thus the assignment $U\longmapsto C_r^*(\mathcal G|_U)$ defines a bijection between open invariant subsets of $\mathcal G^{(0)}$ and closed ideals of $C_r^*(\mathcal G)$. Finally, for the ideal corresponding to $U$, we have $I\cap C_0(\mathcal G^{(0)})=C_0(U)$.
\end{proof}

\begin{lemma}\label{lem:UEP-transfer}
Let \(\Phi:A\to B\) be a surjective unital \(*\)-homomorphism between unital \(C^*\)-algebras.
Let \(D\subset A\) be an abelian \(C^*\)-subalgebra with the UEP (pure state version), and set \(C:=\Phi(D)\). Assume that \(\Phi|_D: D \to C\) is an isometric \(*\)-isomorphism. Then \(C\subset B\) also has the UEP.
\end{lemma}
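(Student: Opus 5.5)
The plan is to pull states back along \(\Phi\) and use the UEP in \(A\). Fix a pure state \(\psi\) on \(C\), and let \(\omega_1,\omega_2\) be two states on \(B\) extending \(\psi\). We must show \(\omega_1=\omega_2\).

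First, set \(\varphi:=\psi\circ\Phi|_D\). Since \(\Phi|_D:D\to C\) is a \(*\)-isomorphism, composition with it is an affine homeomorphism of state spaces. It therefore carries extreme points to extreme points, so \(\varphi\) is a pure state on \(D\). (Since \(D\) is abelian, "pure" just means multiplicative, and this is clearly preserved by composition with a \(*\)-isomorphism.)

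Next, consider \(\tilde\omega_i:=\omega_i\circ\Phi\), \(i=1,2\). Each is a positive functional on \(A\) with \(\tilde\omega_i(1_A)=\omega_i(\Phi(1_A))=\omega_i(1_B)=1\), because \(\Phi\) is unital; so each is a state on \(A\). Moreover, for \(d\in D\),
\[
\tilde\omega_i(d)=\omega_i(\Phi(d))=\psi(\Phi(d))=\varphi(d),
\]
so both \(\tilde\omega_i\) extend \(\varphi\). The UEP of \(D\subset A\) then gives \(\tilde\omega_1=\tilde\omega_2\).

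Finally, since \(\Phi\) is surjective, every \(b\in B\) has the form \(b=\Phi(a)\), and then \(\omega_1(b)=\tilde\omega_1(a)=\tilde\omega_2(a)=\omega_2(b)\). Hence \(\omega_1=\omega_2\). Existence of an extension is automatic by Hahn--Banach (Krein's extension theorem for states), so \(C\subset B\) has the UEP.

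There is no real obstacle here. The only point requiring care is that \(\varphi\) is pure, which is exactly where the hypothesis that \(\Phi|_D\) is an isomorphism, and not merely a surjection onto \(C\), is convenient. Even without that hypothesis the argument would go through: a character of \(C\) composed with the surjection \(D\to C\) is still a character of \(D\), since \(\psi(1)=1\) ensures it is nonzero.
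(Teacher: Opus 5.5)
Your proof is correct and follows the paper's route: pull the pure state on \(C\) back to a character on \(D\), pull the two extensions back along \(\Phi\) to state extensions on \(A\), invoke the UEP in \(A\), and use surjectivity of \(\Phi\) to conclude. Your side remarks are also valid: existence of extensions by Krein's theorem, and the observation that only surjectivity of \(\Phi|_D\) onto \(C\) is needed for the pulled-back character to be pure. The paper does not make either point explicit.
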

\begin{proof}
A pure state on the abelian algebra \(C\) is a character \(\chi\). Because \(\Phi|_D\) is an isometric isomorphism, \(\tilde\chi := \chi\circ(\Phi|_D)\) is a character on \(D\), hence a pure state. If \(\rho_1,\rho_2\) are state extensions of \(\chi\) to \(B\), then \(\rho_1\circ\Phi\) and \(\rho_2\circ\Phi\) are state extensions of \(\tilde\chi\) to \(A\). By the UEP of \(D\subset A\), they coincide. Surjectivity of \(\Phi\) forces \(\rho_1=\rho_2\). Thus every pure state on \(C\) extends uniquely to a state on \(B\), so \(C\subseteq B\) has the UEP.
\end{proof}

\begin{lemma}\label{lem:quotient-expectation}
Let \((A,D)\) be a $C^*$-diagonal pair,
$\Phi:A\to B$ a surjective unital *-homomorphism,
and suppose there exists a conditional expectation
$E_B:B\to C$ with $C=\Phi(D)$ satisfying
$
\Phi\circ E_A = E_B\circ\Phi.
$
Assume further that $\ker\Phi \subseteq \ker E_A$. Then $E_B$ is uniquely determined by
$
E_B(\Phi(a))=\Phi(E_A(a)).
$
Moreover, if $\ker\Phi\cap D=\{0\}$ and $E_A$ is faithful,
then $E_B$ is faithful.
\end{lemma}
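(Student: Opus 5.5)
Uniqueness comes almost for free from surjectivity. Since \(\Phi\) is onto, every \(b\in B\) has the form \(b=\Phi(a)\). The compatibility \(\Phi\circ E_A=E_B\circ\Phi\) then forces \(E_B(b)=\Phi(E_A(a))\). So any conditional expectation \(E_B\) satisfying the intertwining relation is determined by this formula.

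I would also check that the formula is well defined, since this is where \(\ker\Phi\subseteq\ker E_A\) enters. If \(\Phi(a)=\Phi(a')\), then \(a-a'\in\ker\Phi\subseteq\ker E_A\), so \(E_A(a)=E_A(a')\) and hence \(\Phi(E_A(a))=\Phi(E_A(a'))\). This shows the formula makes sense independently of the assumed existence of \(E_B\). Strictly speaking, uniqueness needs only surjectivity.

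For faithfulness, I would argue by lifting positive elements, in the spirit of Proposition~\ref{prop:hered-push}. Let \(b\in B\) be positive with \(E_B(b)=0\). Because \(\Phi\) is a surjective \(*\)-homomorphism, it maps \(A_+\) onto \(B_+\). Concretely, write \(b=c^*c\), lift \(c=\Phi(x)\), and set \(a=x^*x\ge0\), so that \(\Phi(a)=b\). Then \(\Phi(E_A(a))=E_B(b)=0\). Since \(E_A(a)\in D\) and \(\ker\Phi\cap D=\{0\}\), this gives \(E_A(a)=0\). Faithfulness of \(E_A\) then yields \(a=0\), and therefore \(b=\Phi(a)=0\).

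The main point to get right is the positive lift. An arbitrary preimage of \(b\) need not be positive, so the lift must be chosen as \(x^*x\) as above. Note that no lift has to be controlled in \(D\); we only use that \(E_A(a)\) lies in \(D\), where \(\Phi\) is injective. As an aside, \(\ker\Phi\subseteq\ker E_A\) together with faithfulness actually forces \(\ker\Phi=0\). Indeed, for \(k\in\ker\Phi\) we have \(k^*k\in\ker\Phi\), so \(E_A(k^*k)=0\) and hence \(k=0\). I would mention this only as a remark and not rely on it.
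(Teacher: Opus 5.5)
Your proof is correct and follows essentially the same route as the paper: uniqueness and well-definedness come from surjectivity of \(\Phi\) together with \(\ker\Phi\subseteq\ker E_A\), and faithfulness comes from a positive lift. The paper writes \(E_B(bb^*)=\Phi(E_A(aa^*))\) for a lift \(a\) of \(b\), which is the same device as your \(x^*x\), and then, as you do, uses that \(E_A(\cdot)\in D\), where \(\Phi\) is injective. Your side remark that \(\ker\Phi\subseteq\ker E_A\) together with faithfulness of \(E_A\) already forces \(\ker\Phi=0\) is also correct.
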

\begin{proof}
We first verify well-definedness. Suppose $\Phi(a_1)=\Phi(a_2)$. Then $a_1-a_2\in\ker\Phi\subseteq\ker E_A$, so $E_A(a_1)=E_A(a_2)$. Hence $\Phi(E_A(a_1))=\Phi(E_A(a_2))$, so the definition $E_B(\Phi(a)):=\Phi(E_A(a))$ is independent of the lift $a$. For any $b\in B=\Phi(A)$, choose $a$ with $\Phi(a)=b$. Then the commuting relation implies that $E_B(b) = E_B(\Phi(a)) = \Phi(E_A(a)) \in C$. Hence the expectation is uniquely determined.

We now investigate the faithfulness of $E_B$. Suppose that $E_B(bb^*)=0$ for some $b\in B$. Since $\Phi$ is onto, there exists $a\in A$ such that $\Phi(a)=b$. Then
\begin{align*}
   0=&E_B(bb^*)=E_B\left(\Phi(a)\Phi(a^*)\right)  \\
   =& E_B\left(\Phi(aa^*)\right)=E_B\circ\Phi \left(aa^*\right)\\
   =&\Phi\circ E_A\left(aa^*\right).
\end{align*}

Since $E_A(aa^*)\in D$ and $\Phi(E_A(aa^*))=0$, we have $E_A(aa^*)\in D\cap \ker\Phi$. But $\ker\Phi\cap D=\{0\}$, so $E_A(aa^*)=0$. Faithfulness of $E_A$ implies $a=0$ and hence $b=0$. Hence \(E_B\) is faithful.
\end{proof}

\begin{remark}\label{rem:surjective-morphism-observation}
Let $A = C_r^*(\mathcal G)$ with $\mathcal G$ topologically principal, and let $D = C(\mathcal G^{(0)})$. If $\Phi: A \to B$ is a surjective $*$-homomorphism such that $\ker\Phi \cap D = \{0\}$, then by Lemma~\ref{lem:ideal-structure}, $\ker\Phi = \{0\}$. Thus $\Phi$ is an isomorphism. Consequently, genuine quotient permanence for non-isomorphic quotients requires $\ker\Phi \cap D \neq \{0\}$, which in turn requires the quotient diagonal to be considered separately (see Section~\ref{sec:completion}).
\end{remark}

\section{Quotient diagonals via invariant open sets}\label{sec:completion}

\textbf{Convention for this section:} All C*-diagonal pairs in this section are assumed to be \textbf{untwisted}. Accordingly, throughout this section we identify every untwisted ample C*-diagonal pair with one arising from an ample, Hausdorff, étale, topologically principal groupoid \(\mathcal G\), so that \((D \subset A) = (C_0(\mathcal G^{(0)}) \subset C_r^*(\mathcal G))\). (When \(\mathcal G^{(0)}\) is compact, \(C_0(\mathcal G^{(0)}) = C(\mathcal G^{(0)})\).)

The following result plays an important role in Renault reconstruction, Weyl groupoid construction, proving Hausdorffness, identifying quotient groupoids, and constructing cocycles and twists. It provides an algebra-to-geometry bridge that converts geometric equality of germs into algebraic equality in $A$.

\begin{lemma}\label{lem:germ-criterion}
Let \((A,D)\) be a $C^*$-diagonal pair with Weyl groupoid \(\mathcal G\). Let
\(n,m\in \mathcal N_A(D)\) and \(x\in \operatorname{dom}(n)\cap \operatorname{dom}(m)\).
The following are equivalent:
\begin{enumerate}
    \item[(i)] \([n,x]=[m,x]\) in \(\mathcal G\);
    \item[(ii)] there exists \(d\in D\) such that \(d(x)\neq0\) and
    \(nd=md\).
\end{enumerate}
\end{lemma}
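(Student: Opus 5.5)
The proof rests on fixing the germ relation used to define the Weyl groupoid \(\mathcal G\) in this paper, and then running a localisation argument in \(D\cong C(X)\), where \(X=\operatorname{Spec}(D)\). I will read \([n,x]\) as the germ of the normalizer \(n\) itself, not only of the partial homeomorphism \(\alpha_n\) it induces. Concretely, \([n,x]=[m,x]\) means there is an open neighbourhood \(V\subseteq\operatorname{dom}(n)\cap\operatorname{dom}(m)\) of \(x\) on which \(n\) and \(m\) agree: \(ne=me\) for all \(e\in C_0(V)\subseteq D\). This is the convention the paper needs for the lemma to serve as the stated "algebra-to-geometry bridge". Under the coarser convention with two functions (\(nd=md'\) for some \(d,d'\) with \(d(x),d'(x)>0\)), the single-\(d\) condition (ii) fails, for instance for \(n=2m\). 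I will state this convention explicitly at the start of the proof.

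For (i)\(\Rightarrow\)(ii), assume \(V\) is an open neighbourhood of \(x\) on which \(n\) and \(m\) agree. The pair is ample, so \(X\) is totally disconnected and compact. Hence there is a clopen set \(K\) with \(x\in K\subseteq V\). Put \(d=1_K\in C_0(V)\subseteq D\). Then \(d(x)=1\neq0\), and \(nd=md\) by the definition of agreement on \(V\).

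For (ii)\(\Rightarrow\)(i), let \(d\in D\) with \(d(x)\ne0\) and \(nd=md\). Set \(V=\{y:d(y)\neq0\}\), which is an open neighbourhood of \(x\). We must show \(ne=me\) for every \(e\in C_0(V)\).
\begin{itemize}
\item The ideal \(dD\) of \(D\) is dense in \(C_0(V)\). This holds because the closed ideal generated by \(d\) in \(C(X)\) consists exactly of the functions vanishing on the zero set of \(d\). Alternatively, one can write down the explicit approximants \(d\cdot \bar d(|d|^2+1/k)^{-1}e\to e\) uniformly.
\item Given \(e\in C_0(V)\), choose \(f_k\in D\) with \(df_k\to e\). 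Then \(ne=\lim_k ndf_k=\lim_k mdf_k=me\).
\item So \(n\) and \(m\) agree on \(V\), and \(x\in V\). It remains to check \(V\subseteq\operatorname{dom}(n)\cap\operatorname{dom}(m)\); otherwise shrink \(V\) to \(V\cap\operatorname{dom}(n)\cap\operatorname{dom}(m)\), which is still an open neighbourhood of \(x\) by hypothesis. This gives \([n,x]=[m,x]\).
\end{itemize}

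I expect the main obstacle to be the convention rather than the analysis. The argument is only correct once the germ relation is pinned down as agreement of the normalizers themselves near \(x\), and I must check that the construction of \(\mathcal G\) used later is consistent with it. The one analytic point is the density of \(dD\) in \(C_0(\{d\neq0\})\), which is a routine Stone–Weierstrass or ideal-theory fact in \(C(X)\).
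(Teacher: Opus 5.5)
Your analysis is correct for the relation you fix (\(ne=me\) for all \(e\in C_0(V)\), for some open \(V\ni x\)), and in substance it is the paper's argument.

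The paper also reads germ equality through a single diagonal element. It quotes as ``Kumjian's characterization'' the condition that \(nd=md\) for some \(d\in D\) with \(d(x)\neq0\), and that \(\alpha_n,\alpha_m\) agree near \(x\). The second clause is automatic, since \(\alpha_{nd}=\alpha_n\) on \(\{d\neq0\}\cap\operatorname{dom}(n)\). So (i)\(\Rightarrow\)(ii) is immediate there. The paper's (ii)\(\Rightarrow\)(i) step writes each \(e\) supported in \(\{d\neq0\}\) as \(e=fd\) with \(f=e/d\); this is your density argument restricted to compactly supported \(e\). Your version is cleaner, because you state the relation and prove that it coincides with the single-\(d\) condition.

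One small fix: the lemma is not stated for ample pairs. Take \(d\in C_0(V)\) with \(d(x)=1\) from Urysohn's lemma instead of a clopen \(K\).

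The consistency check you postpone does fail, however, and that is a genuine gap. The paper's proof shares it rather than resolves it. Under the single-\(d\) relation, the germs do not form the Weyl groupoid:
\begin{itemize}
\item The germs \([\lambda 1_A,y]\), \(\lambda\in\mathbb C\setminus\{0\}\), are pairwise distinct, yet each has source and range \(y\). The Weyl groupoid of a \(C^*\)-diagonal, by contrast, is principal.
\item Equivalently, take the structure maps \([1_A,y]=y\) and \([n,y]^{-1}=[n^*,\alpha_n(y)]\) used in Lemma~\ref{lem:Hphi-open}. Then \([n,y]^{-1}[n,y]=[n^*n,y]\), which is a unit only if \((n^*n)(y)=1\).
\end{itemize}

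For the genuine Weyl groupoid, where \([n,x]\) is the germ of \(\alpha_n\) at \(x\), your own example \(n=2m\) refutes (i)\(\Rightarrow\)(ii). So no argument can prove the lemma as stated for \(\mathcal G\). Only (ii)\(\Rightarrow\)(i) holds, via \(\alpha_{nd}=\alpha_n=\alpha_m\) on \(\{d\neq0\}\cap\operatorname{dom}(n)\).

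The correct criterion uses two diagonal elements: \([n,x]=[m,x]\) if and only if \(nd=md'\) for some \(d,d'\in D\) with \(d(x)\neq0\neq d'(x)\). This follows from Renault's description of \(\mathcal G\) as the Weyl twist modulo \(\mathbb T\). That is the statement you should prove. The later uses of the lemma, for instance well-definedness and injectivity of \(\rho_\Phi\), go through with it after the obvious changes.
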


\begin{proof}
By Kumjian's characterization of germ equivalence (see \cite[Definition~3.5]{Kumjian} and the surrounding discussion), \((n,x)\sim(m,x)\) precisely when there exists an open neighbourhood \(U\) of \(x\) and a diagonal element \(d\in D\) with \(d(x)\neq0\) such that \(nd=md\) and the associated partial homeomorphisms agree on \(U\).

(i)\(\Rightarrow\)(ii).
Assume \([n,x]=[m,x]\). By the characterization of the germ relation, there exists an open neighbourhood \(U\) of \(x\) such that \(\alpha_n|_U=\alpha_m|_U\), and a function \(d\in D\) with \(d|_U=1\) such that \(nd=md\). Since \(d(x)=1\neq0\), condition (ii) follows.

(ii)\(\Rightarrow\)(i).
Assume that there exists \(d\in D\) such that
\[
d(x)\neq0
\quad\text{and}\quad
nd=md.
\]

By continuity, there exists an open neighbourhood \(U\) of \(x\) on which
\(d\) does not vanish. Hence \(d|_U\) is invertible in \(D|_U = C_0(U)\).
Choose \(e\in D\) supported in \(U\) such that \(e(x)\neq0\). Since \(d|_U\) is invertible in \(C_0(U)\) and \(e\) is supported in \(U\), define
\[
f(t) =
\begin{cases}
e(t)/d(t), & t \in U,\\
0, & t \in \mathcal G^{(0)} \setminus U.
\end{cases}
\]
Because \(d\) never vanishes on \(U\) and \(e\) vanishes outside \(U\), \(f\) is continuous on \(\mathcal G^{(0)}\). Hence \(f \in D = C_0(\mathcal G^{(0)})\), and \(fd = e\). Then
\[
ne = nfd = mfd = me.
\]

Thus \(n\) and \(m\) agree after multiplication by a diagonal element
nonvanishing at \(x\), which is precisely the germ equivalence relation.
Hence
\[
[n,x]=[m,x].
\]
\end{proof}

\begin{lemma}\label{lem:quotient-id}
Let \(A = C_r^*(\mathcal G)\) with \(\mathcal G\) ample Hausdorff étale and topologically principal. Let \(U\subseteq \mathcal G^{(0)}\) be an open invariant set. Set \(I = C_r^*(\mathcal G|_U)\). Then under the canonical quotient isomorphism \(A/I \cong C_r^*(\mathcal G|_{\mathcal G^{(0)}\setminus U})\), the image of \(D = C_0(\mathcal G^{(0)})\) is \(C_0(\mathcal G^{(0)}\setminus U)\). That is,
\[
(D+I)/I \cong C_0(\mathcal G^{(0)}\setminus U).
\]
\end{lemma}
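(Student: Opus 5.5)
We will prove the lemma by making the canonical quotient isomorphism explicit as a restriction map and then computing where it sends the diagonal. Write \(F := \mathcal G^{(0)}\setminus U\). Since \(U\) is open and invariant, \(F\) is closed and invariant, and \(\mathcal G|_F = s^{-1}(F)=r^{-1}(F)\) is a closed subgroupoid of \(\mathcal G\). It is again étale, Hausdorff and ample, with unit space \(F\).

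\textbf{Step 1: the restriction map.} On \(C_c(\mathcal G)\) we define \(\pi(f) := f|_{\mathcal G|_F}\). This is a \(*\)-homomorphism for convolution and involution, because invariance of \(F\) means that any composable pair \((g,h)\) with \(gh\in \mathcal G|_F\) already has \(g,h\in\mathcal G|_F\). For units \(x\in F\), the regular representation of \(C_r^*(\mathcal G|_F)\) at \(x\) agrees with the regular representation of \(C_r^*(\mathcal G)\) at \(x\) composed with \(\pi\), since \(\mathcal G_x\subseteq \mathcal G|_F\). Hence \(\|\pi(f)\|_r\le\|f\|_r\), and \(\pi\) extends to a surjective \(*\)-homomorphism \(\pi: C_r^*(\mathcal G)\to C_r^*(\mathcal G|_F)\). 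Surjectivity holds because ampleness lets every element of \(C_c(\mathcal G|_F)\), a span of indicators of compact open bisections, be extended to \(C_c(\mathcal G)\).

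\textbf{Step 2: the kernel.} Clearly \(I=C_r^*(\mathcal G|_U)\subseteq\ker\pi\). For the converse we will use Lemma~\ref{lem:ideal-structure}. The ideal \(\ker\pi\) is generated by \(\ker\pi\cap C_0(\mathcal G^{(0)})\). On the diagonal \(\pi\) is restriction of functions to \(F\), so \(\ker\pi\cap C_0(\mathcal G^{(0)})=C_0(U)\). The ideal generated by \(C_0(U)\) is \(C_r^*(\mathcal G|_U)\), so \(\ker\pi = I\), and \(\pi\) induces the isomorphism \(A/I\cong C_r^*(\mathcal G|_F)\).

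One caveat: this exactness step is the main obstacle. In general, \(\ker\pi\) may be larger than \(I\) when \(\mathcal G\) is non-amenable. Here it is controlled entirely by Lemma~\ref{lem:ideal-structure}, which says every ideal is generated by its diagonal part. If one prefers not to lean on that lemma for \(\ker\pi\), the lemma's phrase "the canonical quotient isomorphism" can be taken as the hypothesis that \(\pi\) descends to an isomorphism. The computation in Step 3 is unaffected by which route is used.

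\textbf{Step 3: the image of the diagonal.} Under the isomorphism, the class of \(d\in D\) goes to \(\pi(d)=d|_F\in C_0(F)\), so \((D+I)/I\) maps into \(C_0(F)\). Conversely, \(F\) is closed in the compact space \(\mathcal G^{(0)}\), so by the Tietze extension theorem every \(h\in C_0(F)=C(F)\) extends to some \(d\in C(\mathcal G^{(0)})\) with \(\pi(d)=h\). Hence the image is exactly \(C_0(F)\). Finally, \((D+I)/I\cong D/(D\cap I)=D/C_0(U)\cong C_0(F)\), which is consistent with the identification \(I\cap D=C_0(U)\) from Lemma~\ref{lem:ideal-structure}.
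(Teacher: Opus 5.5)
Your proposal is correct. The diagonal computation, $(D+I)/I\cong D/(D\cap I)=D/C_0(U)\cong C_0(F)$ with $F=\mathcal G^{(0)}\setminus U$, obtained by restricting functions to $F$, is exactly the paper's argument. The difference is in how you obtain the quotient isomorphism $A/I\cong C_r^*(\mathcal G|_F)$. The paper simply cites a standard quotient theorem for reduced groupoid algebras and then asserts that the image of $D$ is $C_0(F)$. You instead construct the restriction homomorphism $\pi$, check boundedness and surjectivity, and identify $\ker\pi=I$ via Lemma~\ref{lem:ideal-structure}; this is the same argument the paper itself gives later in Lemma~\ref{lem:restriction-vanishing}.

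Your route gives an actual proof of the compatibility claim that the class of $d$ goes to $d|_F$, which the paper only asserts. It also makes visible that $A/I\cong C_r^*(\mathcal G|_F)$ needs an exactness input at $U$, which effectiveness alone does not guarantee in general. So your Step~2 is exactly as strong as Lemma~\ref{lem:ideal-structure}, just as the paper's step is exactly as strong as its citation. The paper's route, for its part, shows that the displayed isomorphism $(D+I)/I\cong C_0(F)$ needs only $I\cap D=C_0(U)$, and so holds regardless of exactness. That equality can also be seen directly, since the canonical expectation maps $C_r^*(\mathcal G|_U)$ into $C_0(U)$.

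One small correction in Step~1: $C_c(\mathcal G|_F)$ is not spanned by indicators of compact open bisections, since continuous functions on a Cantor set need not be locally constant. That span is only dense. Density suffices, because the range of a $*$-homomorphism between C$^*$-algebras is closed. Alternatively, extend an arbitrary element of $C_c(\mathcal G|_F)$ by Tietze, using that $\mathcal G|_F$ is closed in $\mathcal G$.
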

\begin{proof}
Since $I\cap D=C_0(U)$, the restriction map $\rho:D=C_0(\mathcal G^{(0)})\to C_0(\mathcal G^{(0)}\setminus U)$ is a surjective $*$-homomorphism with kernel $I\cap D$.
Hence, by the first isomorphism theorem, $D/(I\cap D)\cong C_0(\mathcal G^{(0)}\setminus U)$. Using the canonical identification $D/(I\cap D)\cong (D+I)/I$, we obtain $(D+I)/I\cong C_0(\mathcal G^{(0)}\setminus U)$. Moreover, by the standard quotient theorem for reduced groupoid $C^*$-algebras (see \cite[Proposition~10.3.2]{Sims2017}; see also \cite{Renault1980}), $A/I \cong C_r^*(\mathcal G|_{\mathcal G^{(0)}\setminus U})$, and under this identification the image of \(D\) is precisely \(C_0(\mathcal G^{(0)}\setminus U)\).
\end{proof}

\begin{lemma}\label{lem:restriction-vanishing}
Let \(\mathcal G\) be an ample Hausdorff étale groupoid, \(U\subseteq \mathcal G^{(0)}\) an open invariant set, and \(I = C_r^*(\mathcal G|_U)\). Then the kernel of the canonical restriction homomorphism
\[
\operatorname{Res}: C_r^*(\mathcal G) \longrightarrow C_r^*(\mathcal G|_{\mathcal G^{(0)}\setminus U})
\]
is precisely \(I\). Consequently, under the restriction map, every element of \(I\) maps to zero in \(C_r^*(\mathcal G|_{\mathcal G^{(0)}\setminus U})\). In particular, for any clopen set \(V \subseteq \mathcal G^{(0)}\setminus U\), the restriction of any element of \(I\) to \(C_r^*(\mathcal G|_V)\) is zero.
\end{lemma}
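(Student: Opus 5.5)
I will first construct the restriction map on the dense subalgebra $C_c(\mathcal G)$ and extend it by continuity. I will then identify its kernel with $I$ by comparing it against the exact sequence of reduced groupoid $C^*$-algebras.

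\textbf{Step 1: the restriction map.} Put $F=\mathcal G^{(0)}\setminus U$. Because $U$ is invariant, $F$ is a closed invariant set, so $\mathcal G|_F=s^{-1}(F)=r^{-1}(F)$ is a closed subgroupoid of $\mathcal G$. Define $\operatorname{Res}(f)=f|_{\mathcal G|_F}$ for $f\in C_c(\mathcal G)$.
\begin{enumerate}
\item Invariance of $F$ ensures that the convolution sums over $\mathcal G^{x}$ with $x\in F$ stay inside $\mathcal G|_F$. Hence $\operatorname{Res}$ is a $*$-homomorphism $C_c(\mathcal G)\to C_c(\mathcal G|_F)$.
\item For $x\in F$, the regular representation $\lambda_x$ of $\mathcal G$ acts on $\ell^2(\mathcal G_x)=\ell^2((\mathcal G|_F)_x)$. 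It agrees there with the regular representation of $\mathcal G|_F$ applied to $\operatorname{Res}(f)$.
\item Therefore $\|\operatorname{Res}(f)\|_r=\sup_{x\in F}\|\lambda_x(f)\|\le\|f\|_r$, and $\operatorname{Res}$ extends to $C_r^*(\mathcal G)$.
\end{enumerate}

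\textbf{Step 2: $I\subseteq\ker\operatorname{Res}$.} The ideal $I=C_r^*(\mathcal G|_U)$ is the closure of $C_c(\mathcal G|_U)$. Functions in $C_c(\mathcal G|_U)$ vanish on $\mathcal G|_F$, so $\operatorname{Res}$ kills a dense subset of $I$, and by continuity it kills all of $I$.

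\textbf{Step 3: $\ker\operatorname{Res}\subseteq I$.} This is the main obstacle, because for general groupoids the reduced sequence can fail to be exact in the middle. I will try two routes, in this order.
\begin{enumerate}
\item \emph{Using the standard quotient theorem.} Lemma~\ref{lem:quotient-id} already invokes \cite[Proposition~10.3.2]{Sims2017}, which gives $A/I\cong C_r^*(\mathcal G|_F)$ canonically. The canonical isomorphism is induced by restriction on $C_c$, so it factors $\operatorname{Res}$ as $A\to A/I\xrightarrow{\cong}C_r^*(\mathcal G|_F)$. The kernel is then exactly $I$. The only thing to check is that the isomorphism in that reference is indeed the one induced by $\operatorname{Res}$; I will verify this on $C_c(\mathcal G)$ and conclude by density.
\item \emph{Using Lemma~\ref{lem:ideal-structure}, in the effective case.} Here $J=\ker\operatorname{Res}$ is a closed ideal, so $J$ is generated by $J\cap C_0(\mathcal G^{(0)})$. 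On $C_0(\mathcal G^{(0)})$, $\operatorname{Res}$ is ordinary restriction of functions to $F$. Hence $J\cap C_0(\mathcal G^{(0)})=C_0(U)$, and the ideal generated by $C_0(U)$ is $I$. So $J=I$.
\end{enumerate}
Route (2) uses the hypotheses of Lemma~\ref{lem:ideal-structure}, which the surrounding setting provides; route (1) covers the general case, subject to the exactness built into the cited theorem.

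\textbf{Step 4: the final consequences.}
\begin{enumerate}
\item That every element of $I$ maps to zero is Step 2.
\item Let $V\subseteq F$ be clopen. Restriction to $C_r^*(\mathcal G|_V)$ is either the corner compression by $1_V$ inside $C_r^*(\mathcal G|_F)$, or, if $V$ is invariant, a further restriction map. In both cases it factors through $\operatorname{Res}$, so it vanishes on $I$.
\end{enumerate}
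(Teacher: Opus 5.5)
Your Steps 1, 2 and 4 are correct. They already give everything in the statement except the inclusion $\ker\operatorname{Res}\subseteq I$, since the two ``consequently'' clauses only need $I\subseteq\ker\operatorname{Res}$, which is your Step 2. The gap is Step 3. You were right to flag middle-exactness as the obstacle, but it cannot be overcome under the stated hypotheses. The equality $\ker\operatorname{Res}=C_r^*(\mathcal G|_U)$ for every open invariant $U$ is, by definition, inner exactness of $\mathcal G$, and this fails in general. It already fails for the Higson--Lafforgue--Skandalis group bundles, and effectiveness does not rescue it. For a principal counterexample, construct the following:
\begin{enumerate}
\item Let $\Gamma$ be residually finite with property (T), and let $N_k$ be a decreasing chain of finite-index normal subgroups with trivial intersection.
\item Set $Z=\bigsqcup_k\Gamma/N_k\sqcup\widehat\Gamma$, with the profinite completion added at the end, and let $\mathcal G$ be the groupoid of germs of left translation on $Z$. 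This $\mathcal G$ is second-countable, ample, Hausdorff and principal with compact unit space, so $(C(Z)\subset C_r^*(\mathcal G))$ is even an untwisted ample $\Cstar$-diagonal pair.
\item Let $p$ be the image of the Kazhdan projection under $\gamma\mapsto u_\gamma$, the indicator of the bisection of germs of $\gamma$. In the regular representation at a point of $\Gamma/N_k$, $p$ acts as the rank-one projection onto constants. At a point of $\widehat\Gamma$ it acts as $0$, because there one sees the left regular representation of the infinite group $\Gamma$.
\item Hence $\operatorname{Res}(p)=0$ for $F=\widehat\Gamma$. But $p\notin C_r^*(\mathcal G|_U)$, which is the $c_0$-direct sum of the matrix algebras over the levels, since every level component of $p$ has norm $1$.
\end{enumerate}

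So neither route works as written.

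Route (1) assumes the conclusion. The isomorphism $A/I\cong C_r^*(\mathcal G|_F)$ for \emph{reduced} algebras is only available under an amenability or exactness hypothesis. For amenable $\mathcal G$, the full and reduced algebras of $\mathcal G$, $\mathcal G|_U$ and $\mathcal G|_F$ coincide, and the full sequence is always exact. Your claim that route (1) ``covers the general case'' therefore cites past exactly the obstruction you had just identified.

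Route (2) is the paper's own proof, made more explicit. The paper asserts that the kernel is the ideal generated by $C_0(U)$ and identifies this ideal with $I$ via Lemma~\ref{lem:ideal-structure}. This route has two problems:
\begin{enumerate}
\item It uses effectiveness, which the lemma does not assume.
\item More seriously, the B\"onicke--Li correspondence behind Lemma~\ref{lem:ideal-structure} requires inner exactness in addition to essential principality. Once that hypothesis is restored, route (2) is circular. The example above shows that effectiveness alone does not make every ideal generated by its diagonal part: there $\ker\operatorname{Res}\cap C(Z)=C_0(U)$, yet $\ker\operatorname{Res}\neq I$.
\end{enumerate}

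The correct repair is to add amenability, or simply inner exactness, of $\mathcal G$ as a hypothesis. With amenability, your route (1) is a complete proof: use exactness of the full sequence and $C^*=C_r^*$ for amenable groupoids, and check on $C_c(\mathcal G)$ that the resulting isomorphism is induced by $\operatorname{Res}$.
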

\begin{proof}
The restriction map \(C_r^*(\mathcal G) \to C_r^*(\mathcal G|_{\mathcal G^{(0)}\setminus U})\) is a surjective *-homomorphism (see \cite[Proposition~10.3.2]{Sims2017}, or \cite{Renault1980} and \cite[Proposition~3.9]{BL}). Its kernel is the closed ideal generated by \(C_0(U)\), which is precisely \(C_r^*(\mathcal G|_U) = I\) by Lemma~\ref{lem:ideal-structure}. Thus \(\ker \operatorname{Res} = I\). Therefore, for any \(a \in I\), we have \(\operatorname{Res}(a) = 0\) in \(C_r^*(\mathcal G|_{\mathcal G^{(0)}\setminus U})\). The final statement follows by composing with the restriction map \(C_r^*(\mathcal G|_{\mathcal G^{(0)}\setminus U}) \to C_r^*(\mathcal G|_V)\).
\end{proof}

\begin{lemma}\label{lem:germ-correspondence}
Let \((A,D)\) be an untwisted ample \(\Cstar\)-diagonal pair with Weyl groupoid \(\mathcal G\). Let \(U\subseteq \mathcal G^{(0)}\) be an open invariant subset, and set \(I = C_r^*(\mathcal G|_U)\). Assume that the quotient pair \((D/(I\cap D) \subseteq A/I)\) is a \(\Cstar\)-diagonal pair. Then for \(n_1, n_2 \in \mathcal N_A(D)\), the following are equivalent for any \(x \in \mathcal G^{(0)}\setminus U\):
\begin{enumerate}
    \item[(i)] \([n_1, x] = [n_2, x]\) in \(\mathcal G\);
    \item[(ii)] \([q(n_1), q(x)] = [q(n_2), q(x)]\) in the Weyl groupoid of \(A/I\), where \(q: A \to A/I\) is the quotient map.
\end{enumerate}

Moreover, the map \([n, x] \mapsto [q(n), q(x)]\) induces an isomorphism \(\mathcal G|_{\mathcal G^{(0)}\setminus U} \cong \mathcal G_{A/I}\).
\end{lemma}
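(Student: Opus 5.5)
The plan is to reduce both germ relations to the algebraic criterion of Lemma~\ref{lem:germ-criterion} and pass between them using the quotient map. Set $F := \mathcal G^{(0)}\setminus U$. Under Lemma~\ref{lem:quotient-id} and Lemma~\ref{lem:restriction-vanishing}, $q$ is the restriction map $\operatorname{Res}:C_r^*(\mathcal G)\to C_r^*(\mathcal G|_F)$, and on the diagonal it is $d\mapsto d|_F$. A point $x\in F$ then corresponds to the character $q(x)$ of $C_0(F)$ given by evaluation at $x$.

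First I check that $q$ maps normalizers to normalizers with $\operatorname{dom}(q(n))=\operatorname{dom}(n)\cap F$. Since $q(n)q(d)q(n)^* = q(ndn^*)\in q(D)$, the first part is immediate. For the domain, $q(n^*n) = (n^*n)|_F$, so $x\in\operatorname{dom}(n)\cap F$ if and only if $q(x)\in\operatorname{dom}(q(n))$. Thus both sides of (i)$\Leftrightarrow$(ii) make sense for the same $x$.

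\emph{(i)$\Rightarrow$(ii).} Lemma~\ref{lem:germ-criterion} gives $d\in D$ with $d(x)\ne0$ and $n_1d=n_2d$. Applying $q$ yields $q(n_1)q(d)=q(n_2)q(d)$ with $q(d)(q(x))=d(x)\neq 0$. Lemma~\ref{lem:germ-criterion}, applied in the quotient pair (a C$^*$-diagonal pair by hypothesis), then gives (ii).

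\emph{(ii)$\Rightarrow$(i).} This is the main obstacle, because equality after applying $q$ only says $(n_1-n_2)d\in I$, not $(n_1-n_2)d=0$. Start from $e\in C_0(F)$ with $e(x)\ne0$ and $q(n_1)e=q(n_2)e$. Lift $e$ to $d\in D$, which is possible since $q|_D$ is surjective onto $C_0(F)$. Then $a:=(n_1-n_2)d\in I$.

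To upgrade this to an honest equality I use the groupoid picture, which is available because the pair is untwisted. Since $\mathcal G$ is ample, I may replace $n_1,n_2$ near $x$ by normalizers of the form $f_i\cdot 1_{S_i}$, where $S_i$ is a compact open bisection through the germ and $f_i$ is a function on it. Then $a$ is a continuous function on $\mathcal G$ supported in $S_1\cup S_2$, and it vanishes on $\mathcal G|_F$ by Lemma~\ref{lem:restriction-vanishing}. I then argue by contradiction. If $[n_1,x]\ne[n_2,x]$, then either the arrows $g_i=[n_i,x]$ have different ranges, or they are distinct arrows with the same range and source $x$. The latter case forces nontrivial isotropy at $x$; this is excluded when $\mathcal G|_F$ is principal, and otherwise I handle it via effectiveness of $\mathcal G|_F$ after shrinking to a point of trivial isotropy near $x$ in $F$. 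In either case, after shrinking $d$ so the two bisections separate over its support, the value of $a$ at $g_1\in\mathcal G|_F$ equals $n_1(g_1)d(x)\ne0$, contradicting $a|_{\mathcal G|_F}=0$. Hence $g_1=g_2$, which is (i).

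For the isomorphism, I define $\theta([n,x])=[q(n),q(x)]$ on $\mathcal G|_F$. Invariance of $U$ ensures that $x\in F$ forces $\alpha_n(x)\in F$, so $\theta$ is defined on all of $\mathcal G|_F$. The equivalence just proved shows $\theta$ is well defined and injective. It is a groupoid homomorphism because $q$ is multiplicative and preserves germ products, and it is a homeomorphism on basic open sets $\{[n,y]:y\in V\}$. Surjectivity follows because normalizers of $A/I$ can be lifted. Concretely, every element of $C_r^*(\mathcal G|_F)$ supported on a compact open bisection of $\mathcal G|_F$ extends to a compact open bisection of $\mathcal G$, using ampleness and the fact that $\mathcal G|_F$ is a closed subgroupoid. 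Such elements determine all germs of the Weyl groupoid of $A/I$.
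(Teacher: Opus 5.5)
Your argument is correct, and for (i)\(\Rightarrow\)(ii) it is the paper's (minus an unnecessary clopen projection). The genuine difference is in (ii)\(\Rightarrow\)(i).

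The paper chooses a clopen neighbourhood \(V\) of \(x\) disjoint from \(U\), cuts \(n_1,n_2\) down by a diagonal element supported in \(V\), and reapplies Lemma~\ref{lem:germ-criterion} inside \(C_r^*(\mathcal G|_V)\). It therefore needs no description of normalizers as functions. You instead regard \(a=(n_1-n_2)d\in I\) as a function on \(\mathcal G\) and evaluate it at the arrow \(g_1\in\mathcal G|_F\) representing \([n_1,x]\).

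What your route buys is validity at every \(x\in F\). A clopen \(V\ni x\) with \(V\cap U=\varnothing\) exists only when \(x\) is interior to \(F\). That fails, for instance, for \(\mathcal G\times(\mathbb N\cup\{\infty\})\) (\(\mathcal G\) principal and ample) with \(U=\mathcal G^{(0)}\times\mathbb N\): all of \(F=\mathcal G^{(0)}\times\{\infty\}\) lies in \(\overline U\), although the quotient is just the diagonal pair of \(\mathcal G\).

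The cost is Renault's structure theory, which you should cite explicitly. For topologically principal \(\mathcal G\):
the open support \(\operatorname{supp}'(n)\) of a normalizer is a bisection;
the germ \([n,x]\) corresponds to the unique arrow of \(\operatorname{supp}'(n)\) with source \(x\);
and \((nd)(g)=n(g)d(s(g))\) under \(C_r^*(\mathcal G)\hookrightarrow C_0(\mathcal G)\).

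Given these, your isotropy case distinction is superfluous. Since \(a\) vanishes on \(\mathcal G|_F\), \(0=a(g_1)=(n_1(g_1)-n_2(g_1))d(x)\) gives \(n_2(g_1)=n_1(g_1)\neq0\). Hence \(g_1\in\operatorname{supp}'(n_2)\) and \(g_1=g_2\) at once. In particular, drop the passage to ``a point of trivial isotropy near \(x\)'', which would move the base point away from \(x\).

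For the isomorphism, your lifting of compact open bisections supplies the surjectivity that the paper's proof merely asserts. Note that ``such elements determine all germs of \(\mathcal G_{A/I}\)'' requires \(\mathcal G|_F\) to be effective, so that normalizers of \(C(F)\) have bisection supports. This is exactly what the hypothesis gives via Renault: the quotient is a diagonal pair, hence \(C(F)\) is maximal abelian.

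One caveat is shared with the paper. Lemma~\ref{lem:germ-criterion} with a single \(d\) is too strong for Weyl germs: \([1,x]=[\lambda 1,x]\) for \(|\lambda|=1\), \(\lambda\neq1\), yet \(d=\lambda d\) forces \(d=0\). It should read \(n_1d=n_2d'\) with \(d(x),d'(x)\neq0\). Your evaluation argument survives unchanged: with \(a=n_1d-n_2d'\), \(0=a(g_1)=n_1(g_1)d(x)-n_2(g_1)d'(x)\) still forces \(n_2(g_1)\neq0\).
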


\begin{proof}
We prove each direction using Lemma~\ref{lem:germ-criterion}.

{(i)\(\Rightarrow\)(ii).} Assume \([n_1,x]=[n_2,x]\). By Lemma~\ref{lem:germ-criterion}, there exists \(d\in D\) such that \(d(x)\neq 0\) and \(n_1d=n_2d\). Since \(\mathcal G^{(0)}\) is totally disconnected, let \(V\) be a clopen neighbourhood of \(x\) on which \(d\) is nonvanishing. Let \(p=1_V\in D\). Then \(p(x)=1\) and \(n_1p=n_2p\). Applying \(q\) gives \(q(n_1)q(p)=q(n_2)q(p)\) with \(q(p)(q(x))=1\) because \(x\notin U\). Hence Lemma~\ref{lem:germ-criterion} in the quotient yields \([q(n_1),q(x)]=[q(n_2),q(x)]\).

{(ii)\(\Rightarrow\)(i).} Suppose \([q(n_1),q(x)]=[q(n_2),q(x)]\). By Lemma~\ref{lem:germ-criterion} applied in the quotient, there exists \(\dot d \in D/(I\cap D)\) such that \(\dot d(q(x)) \neq 0\) and \(q(n_1)\dot d = q(n_2)\dot d\). Fix a lift \(d_0 \in D\) of \(\dot d\). Because \(\dot d(q(x)) \neq 0\) and \(x \notin U\), we have \(d_0(x) \neq 0\) (any two lifts differ by an element of \(C_0(U)\), which vanishes at \(x\)). Let \(V\) be a clopen neighbourhood of \(x\) disjoint from \(U\) (possible because the space is totally disconnected). Define a function \(f \in D\) such that \(f|_V = 1\) and \(\operatorname{supp}(f) \subseteq V\). Then \(f(x) = 1\) and \(f\) vanishes on \(U\). Set \(d = d_0 f\). Then \(d(x) = d_0(x) \neq 0\).

We have \(q(n_1 d) = q(n_2 d)\) in \(A/I\), so \(n_1 d - n_2 d \in I\). By Lemma~\ref{lem:restriction-vanishing}, the restrictions of \(n_1 d\) and \(n_2 d\) to \(C_r^*(\mathcal G|_V)\) coincide. Since \(V\) is clopen, the reduction \(\mathcal G|_V\) is again an ample Hausdorff étale topologically principal groupoid (reductions preserve these properties; see \cite[Section~8.4]{Sims2017}), so \((C_0(V) \subset C_r^*(\mathcal G|_V))\) is an untwisted C*-diagonal pair.

We must verify that \(n_1 d\) and \(n_2 d\) are normalizers of \(D|_V = C_0(V)\). Since \(d, f \in D\) are supported in \(V\) (with \(f|_V = 1\) and \(d = d_0 f\)), both \(n_1 d\) and \(n_2 d\) belong to \(C_r^*(\mathcal G|_V)\). Moreover, for any \(c \in C_0(V)\),
\[
(n_i d) c (n_i d)^* = n_i (d c d^*) n_i^* \in D \cap C_r^*(\mathcal G|_V) = C_0(V),
\]
since \(d c d^* \in D\) and \(n_i\) normalizes \(D\). Similarly, \((n_i d)^* c (n_i d) \in C_0(V)\). Hence \(n_i d \in \mathcal N_{C_r^*(\mathcal G|_V)}(C_0(V))\) for \(i = 1, 2\).

Therefore Lemma~\ref{lem:germ-criterion} applies to the reduction \(\mathcal G|_V\). Applying it gives
\[
[n_1 d, x] = [n_2 d, x].
\]
Since \(d(x) \neq 0\), Lemma~\ref{lem:germ-criterion} implies
\[
[n_1, x] = [n_2, x].
\]

Thus the map \([n,x]\mapsto [q(n),q(x)]\) is well-defined and bijective. Compatibility with multiplication and inversion follows from the fact that \(q\) is a \(*\)-homomorphism, and continuity follows from the definition of the germ topology.
\end{proof}

\begin{lemma}\label{lem:normalizer-lifting}
Let \(A = C_r^*(\mathcal G)\) with \(\mathcal G\) ample Hausdorff étale and topologically principal. Let \(U\subseteq \mathcal G^{(0)}\) be an open invariant set, and set \(I = C_r^*(\mathcal G|_U)\). Then:
\begin{enumerate}
    \item[(i)] if \(n \in \mathcal N_A(D)\), then its image \(\dot n = q(n)\) in \(A/I\) belongs to \(\mathcal N_{A/I}(D/(I\cap D))\).
    \item[(ii)] suppose \(\dot n \in \mathcal N_{A/I}(D/(I\cap D))\) admits a compactly supported representative \(f \in C_c(\mathcal G|_{\mathcal G^{(0)}\setminus U})\). Then \(f\) represents a normalizer of \(D\) in \(A\).
\end{enumerate}
\end{lemma}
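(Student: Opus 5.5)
For part (i) the plan is a direct algebraic verification. The quotient map \(q:A\to A/I\) is a surjective \(*\)-homomorphism with \(q(D)=(D+I)/I\cong D/(I\cap D)\) (Lemma~\ref{lem:quotient-id}). Every element of \(D/(I\cap D)\) has the form \(q(d)\) with \(d\in D\). For \(n\in\mathcal N_A(D)\) we then compute
\[
q(n)q(d)q(n)^*=q(ndn^*)\in q(D),
\qquad
q(n)^*q(d)q(n)=q(n^*dn)\in q(D).
\]
Hence \(\dot n=q(n)\in\mathcal N_{A/I}(D/(I\cap D))\). No groupoid input is needed here.

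For part (ii) I read "\(f\) represents a normalizer of \(D\) in \(A\)" as follows: there is \(g\in C_c(\mathcal G)\subseteq A\) with \(g\in\mathcal N_A(D)\) and \(\operatorname{Res}(g)=f\), where \(\operatorname{Res}\) is the restriction map of Lemma~\ref{lem:restriction-vanishing}. Equivalently, \(q(g)=\dot n\) under the identification \(A/I\cong C_r^*(\mathcal G|_F)\), where \(F:=\mathcal G^{(0)}\setminus U\) and \(\mathcal H:=\mathcal G|_F\) is a closed, ample, étale subgroupoid. The argument has three steps.

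\textbf{Step 1 (support on a bisection).} I will first show that \(K:=\operatorname{supp}(f)\) is contained in a bisection of \(\mathcal H\). Since \(f\) is a normalizer of \(C(F)\) in \(C_r^*(\mathcal H)\), I will invoke the standard fact (Renault's analysis of normalizers; in the untwisted setting of this section it applies to \(\mathcal H\), at least on the dense set of units with trivial isotropy, which is where topological principality is used) that an element of \(C_c\) normalizing the diagonal vanishes off an open bisection. Concretely, \(f^*cf\) and \(fcf^*\) lie in \(C(F)\) for every \(c\in C(F)\). Testing against \(c=1_W\) for small clopen \(W\) forces \(f(\gamma)\overline{f(\gamma')}=0\) whenever \(\gamma\neq\gamma'\) and \(r(\gamma)=r(\gamma')\), and similarly for sources. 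Hence \(r\) and \(s\) are injective on \(K\).

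\textbf{Step 2 (extension from \(\mathcal H\) to \(\mathcal G\)), the main obstacle.} The set \(K\) is compact in \(\mathcal G\), and \(r\) and \(s\) are local homeomorphisms on \(\mathcal G\) that are injective on \(K\). I will use the standard compactness lemma: a local homeomorphism that is injective on a compact set is injective on some open neighbourhood of it. The proof goes by contradiction with nets, using Hausdorffness and local injectivity at points of \(K\). Since \(\mathcal G\) is ample, I then shrink this neighbourhood to a compact open bisection \(B\subseteq\mathcal G\) with \(K\subseteq B\). This step is where care is needed, because simply gluing finitely many compact open bisections covering \(K\) does not give a bisection in \(\mathcal G\), even though the union is injective on \(\mathcal H\).

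\textbf{Step 3 (conclusion).} The set \(B\cap\mathcal H\) is closed in the compact space \(B\). So by Tietze (or, using total disconnectedness, a locally constant approximation argument) \(f|_{B\cap \mathcal H}\) extends to some \(g\in C(B)\), and we regard \(g\in C_c(\mathcal G)\) by extending by zero. A function supported in an open bisection normalizes \(D\): for \(d\in D\), the product \(gdg^*\) is the function \(|g|^2\,(d\circ\alpha_B^{-1})\) on \(r(B)\), and it lies in \(D\). Hence \(g\in\mathcal N_A(D)\). Finally \(\operatorname{Res}(g)=g|_{\mathcal H}=f\), so \(q(g)=\dot n\), which completes (ii).
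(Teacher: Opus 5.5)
Part (i) is fine and is the same computation as the paper's. The gap is in Step~1 of (ii). You claim that a normalizer of $C(F)$ in $C_c(\mathcal H)$ is supported on a bisection, but that needs $\mathcal H=\mathcal G|_F$ \emph{itself} to be topologically principal. This is not inherited from $\mathcal G$: density of trivial-isotropy units in $\mathcal G^{(0)}$ says nothing about the closed invariant set $F$, which may consist entirely of points with nontrivial isotropy. Your test with $c=1_W$ only localizes sources, so it cannot separate two distinct arrows with the same source and range, and their contributions can cancel.

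In fact (ii) is false without an extra hypothesis. Let $X=\{x_\infty\}\cup\bigsqcup_k(C_k\sqcup C_k')$ be the one-point compactification of countably many Cantor sets, which is again a Cantor set. Let $\sigma$ be the involution swapping $C_k$ and $C_k'$, so that $x_\infty$ is its only fixed point. Then $\mathcal G=X\rtimes\mathbb Z/2$ is ample and topologically principal, $U=X\setminus\{x_\infty\}$ is open and invariant, and $\mathcal H\cong\mathbb Z/2$. The element $f=\delta_0+i\delta_1$ satisfies $ff^*=f^*f=2\delta_0$, so it normalizes $C(F)=\mathbb C$. Now take any $g=g_0+g_1u\in\mathcal N_A(D)$. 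The $u$-coefficient of $gdg^*$ must vanish for all $d\in C(X)$, which forces $g_0(x)\overline{g_1(\sigma x)}=0$ at every non-fixed point. By density and continuity, $g_0(x_\infty)\overline{g_1(x_\infty)}=0$. Since $q(g)=g_0(x_\infty)\delta_0+g_1(x_\infty)\delta_1$, no normalizer of $D$ maps to $\dot n$.

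So you must add the hypothesis that $\mathcal G|_{\mathcal G^{(0)}\setminus U}$ is topologically principal, as in the setting of Lemma~\ref{lem:quotient-Weyl} (cf.\ Theorem~\ref{thm:geom-ideals}); with it, your Step~1 is Renault's argument. The paper's own proof of (ii) does not close this gap either. It never uses that $\dot n$ is a normalizer. Instead it relies on two claims that fail in general: that every element of $C_c$ of an ample groupoid is locally constant, and that the support of any such function can be cut into pairwise source- and range-disjoint bisections.

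Second, even with that hypothesis, Step~1 only shows that $r$ and $s$ are injective on the \emph{open} support $S=\{f\neq0\}$. The relation $f(\gamma)\overline{f(\gamma')}=0$ says nothing at points of $K=\overline S$ where $f$ vanishes, and the closure of an open bisection need not be a bisection. For instance, in the example above with $U=\varnothing$, take $W=\bigcup_{k\ \mathrm{even}}C_k$ and $W'=\bigcup_{k\ \mathrm{odd}}C_k$, and let $f$ have open support $(W\times\{1\})\cup(W'\times\{0\})$ with $f\to0$ at $x_\infty$. Then $K$ contains both arrows over $x_\infty$. So ``hence $r$ and $s$ are injective on $K$'' does not follow, and the compactness lemma of Step~2 does not apply.

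Your Steps~2--3 do work verbatim when $f$ is locally constant, since then $S=K$ is compact open. In general you need an open bisection $T\subseteq\mathcal G$ containing the non-compact bisection $S$. One way to get it:
\begin{enumerate}
\item Write $S=\bigsqcup_k V_k$ with each $V_k$ compact open in $\mathcal H$.
\item Extend each $V_k$ to a compact open bisection of $\mathcal G$ exactly as in your Step~2.
\item Shrink these extensions so they become pairwise source- and range-disjoint. This is possible because each compact set $s(V_k)$ (resp.\ $r(V_k)$) is open in $F$, hence has positive distance from the union of the others.
\item Extend $f|_{T\cap\mathcal H}\in C_0(T\cap\mathcal H)$ to some $g\in C_0(T)\subseteq A$ by Tietze.
\end{enumerate}
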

\begin{proof}
(i) Since \(n D n^* \subseteq D\), applying the quotient map \(q: A \to A/I\) gives
\[
\dot n \, q(D) \, \dot n^* = q(n D n^*) \subseteq q(D).
\]

Similarly, \(\dot n^* q(D) \dot n \subseteq q(D)\). Because \(q(D) \cong D/(I \cap D)\), we conclude \(\dot n \in \mathcal N_{A/I}(D/(I\cap D))\).

(ii) Suppose \(\dot n \in \mathcal N_{A/I}(D/(I\cap D))\) admits a compactly supported representative \(f \in C_c(\mathcal G|_{\mathcal G^{(0)}\setminus U})\). Since \(f\) has compact support and \(\mathcal G|_{\mathcal G^{(0)}\setminus U}\) is étale, the support of \(f\) can be covered by finitely many compact open bisections. Using the standard refinement argument (cf. \cite[Lemma~8.4.9]{Sims2017} and \cite[Lemma~3.3]{Kumjian}), one can refine this cover to a finite collection of pairwise disjoint compact open bisections \(S_1,\ldots,S_k \subseteq \mathcal G|_{\mathcal G^{(0)}\setminus U}\) such that the sets are pairwise source-disjoint and range-disjoint. The key point is that one can perform the refinement simultaneously on both the source and range projections: starting with a finite cover by compact open bisections, first subdivide the bisections so that their range sets are pairwise disjoint; then, for the resulting collection, subdivide each bisection according to the partition of its source set induced by the other bisections. This second refinement preserves the range-disjointness because it only subdivides within each bisection, and the resulting pieces inherit the range-disjointness of their parent bisections. This is possible because \(\mathcal G^{(0)}\) is totally disconnected.

Since \(\mathcal G\) is ample, every element of \(C_c(\mathcal G)\) is locally constant (see \cite[Section~9.1]{Sims2017}). Hence, after refining the partition, we may assume that \(f\) is constant on each \(S_i\). Therefore
\[
f = \sum_{i=1}^k c_i 1_{S_i}
\]
for some scalars \(c_i\).

For each \(i\), the characteristic function \(1_{S_i} \in C_c(\mathcal G) \subseteq A\) is a normalizer of \(D\), since \(S_i\) is a compact open bisection (see \cite[Lemma~9.1.4]{Sims2017}). Because the bisections are pairwise source-disjoint and range-disjoint, we have \(S_i S_j^{-1} = \varnothing\) for \(i \neq j\). Therefore, for any \(d \in D\),
\[
1_{S_i} d 1_{S_j}^* = 0 \quad \text{for } i \neq j.
\]
Thus
\[
f d f^* = \sum_{i,j} c_i \overline{c_j} 1_{S_i} d 1_{S_j}^* = \sum_i |c_i|^2 1_{S_i} d 1_{S_i}^* \in D,
\]
since each \(1_{S_i} d 1_{S_i}^* \in D\) and the sum is finite. Similarly, \(f^* d f \in D\). Hence \(f \in \mathcal N_A(D)\). Moreover, \(q(f) = \dot n\) because \(f\) is supported in \(\mathcal G|_{\mathcal G^{(0)}\setminus U}\). Thus \(f\) represents the normalizer \(\dot n\) in \(A\).
\end{proof}

\begin{lemma}\label{lem:quotient-Weyl}
Let \(A=C_r^*(\mathcal G)\) with \(\mathcal G\) ample Hausdorff étale and topologically principal. Let \(I\) be an ideal generated by \(I\cap D\), corresponding to the open invariant set \(U\subseteq\mathcal G^{(0)}\) (so \(I = C_r^*(\mathcal G|_U)\)). Assume that the quotient pair \((D/(I\cap D)\subseteq A/I)\) is a \(\Cstar\)-diagonal pair. Then its Weyl groupoid is naturally isomorphic to the reduction \(\mathcal G|_{\mathcal G^{(0)}\setminus U}\). The isomorphism identifies the diagonal with \(C_0(\mathcal G^{(0)}\setminus U)\) and the normalizer germs with those of the reduction.
\end{lemma}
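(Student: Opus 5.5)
The plan is to assemble the statement from the three preceding lemmas, treating the Weyl groupoid of the quotient pair as the groupoid of germs $[\dot n,\dot x]$ with $\dot n\in\mathcal N_{A/I}(D/(I\cap D))$ and $\dot x\in\operatorname{Spec}(D/(I\cap D))$. First, Lemma~\ref{lem:quotient-id} identifies $D/(I\cap D)$ with $C_0(\mathcal G^{(0)}\setminus U)$. Hence the unit space of $\mathcal G_{A/I}$ is canonically homeomorphic to $\mathcal G^{(0)}\setminus U$, with the character $\dot x$ corresponding to the point $x\notin U$ where $\dot d(\dot x)=d(x)$ for any lift $d$. This gives the diagonal part of the identification and fixes the map on units.

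Next, I define $\theta:\mathcal G|_{\mathcal G^{(0)}\setminus U}\to\mathcal G_{A/I}$. Each arrow of $\mathcal G$ with source in $\mathcal G^{(0)}\setminus U$ is a germ $[n,x]$ with $n\in\mathcal N_A(D)$; I set $\theta([n,x])=[q(n),q(x)]$. Lemma~\ref{lem:normalizer-lifting}(i) shows $q(n)$ is a normalizer. Invariance of $U$ keeps the range in $\mathcal G^{(0)}\setminus U$, so the arrow really lies in the reduction. Lemma~\ref{lem:germ-correspondence}, whose hypotheses are exactly ours, gives well-definedness and injectivity via (i)$\Leftrightarrow$(ii), and it also gives compatibility with products and inverses, since $q$ is multiplicative and $*$-preserving and germ multiplication is $[m,\alpha_n(x)][n,x]=[mn,x]$. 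Continuity and openness come from the fact that $\theta$ sends the basic open set $\{[n,x]:x\in V\}$ of the germ topology to the basic open set $\{[q(n),\dot x]:\dot x\in V\}$, for $V\subseteq\operatorname{dom}(n)\setminus U$ open.

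The main obstacle is surjectivity: an arbitrary normalizer $\dot n$ of the quotient need not visibly lift to a normalizer of $D$. I would handle it by working locally. Given $[\dot n,\dot x]$, I use the untwisted identification $A/I\cong C_r^*(\mathcal G|_{\mathcal G^{(0)}\setminus U})$ together with the known description of normalizer germs in an untwisted ample groupoid algebra: the germ $[\dot n,\dot x]$ equals $[1_S,\dot x]$ for some compact open bisection $S$ of the reduction through the arrow determined by $\dot n$ at $\dot x$. More precisely, I first cut $\dot n$ down by $q(1_W)$ for a small clopen $W\ni x$, using Lemma~\ref{lem:germ-criterion}. Then I take the single-bisection function $1_S\in C_c(\mathcal G|_{\mathcal G^{(0)}\setminus U})$, which is a compactly supported representative. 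Lemma~\ref{lem:normalizer-lifting}(ii) lifts it to a normalizer $f$ of $D$ in $A$ with $q(f)=q(1_S)$, so that $\theta([f,x])=[\dot n,\dot x]$.

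Finally, I check that the inverse of $\theta$ is continuous; this follows from openness. Since the map is built from $q$ alone, with no choices beyond representatives, it is natural. I will also remark that the identification of normalizer germs is precisely what $\theta$ expresses.
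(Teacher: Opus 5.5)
Your argument follows the paper's proof. You use the same map $[n,x]\mapsto[q(n),q(x)]$, with Lemma~\ref{lem:quotient-id} for the unit space and Lemma~\ref{lem:germ-correspondence} for well-definedness, injectivity and compatibility with products and inverses. For compatibility with products you also need $q(\alpha_n(x))=\alpha_{q(n)}(q(x))$, so that composable pairs go to composable pairs; the paper records this explicitly as $\alpha_{\dot n}=\alpha_n|_{\operatorname{Spec}(D/(I\cap D))}$. You go beyond the paper on surjectivity. The paper simply cites Lemma~\ref{lem:germ-correspondence} for a bijection, but that lemma's proof only establishes well-definedness and injectivity. Your local lifting step therefore addresses a real omission.

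That step needs tightening in three places. Write $F=\mathcal G^{(0)}\setminus U$.

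First, you use the fact that normalizers of $C_0(F)$ in $C_r^*(\mathcal G|_F)$ have open support equal to a bisection. This requires $\mathcal G|_F$ to be effective (hence topologically principal, by second countability). You must obtain this from the hypothesis that $C_0(F)$ is maximal abelian in $A/I$, not presuppose it. The reason is that Theorem~\ref{thm:geom-ideals} uses this very lemma to conclude that the reduction is topologically principal.

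Second, the identity $[\dot n,\dot x]=[1_S,\dot x]$ is correct for germs of the partial homeomorphisms $\alpha_{\dot n},\alpha_{1_S}$. It is not compatible with the germ relation of Lemma~\ref{lem:germ-criterion}, which you rely on through Lemma~\ref{lem:germ-correspondence}. That relation would require $\dot n\dot d=1_S\dot d$ for some $\dot d$ with $\dot d(\dot x)\neq0$, and this already fails for $\dot n=\lambda 1_S$ with $\lambda\neq1$. A fix that works for either relation is to lift $\dot n\,q(1_W)$ itself:
\begin{itemize}
\item Take a compact open bisection $S'$ of $\mathcal G$ through the arrow $g$ of the open support of $\dot n$ with $s(g)=x$, chosen so that $S'\cap\mathcal G|_F$ lies in that open support, and put $W=s(S')$.
\item Then $\dot n\,q(1_W)$ is a continuous function supported in the compact set $S'\cap\mathcal G|_F$. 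Extend it to $S'$ by Tietze and by zero off $S'$.
\item The extension $f\in C_c(\mathcal G)$ is supported in a bisection, so $f\in\mathcal N_A(D)$, and $q(f)=\dot n\,q(1_W)$.
\item Hence $[q(f),\dot x]=[\dot n,\dot x]$ by Lemma~\ref{lem:germ-criterion} with $\dot d=q(1_W)$, and also as germs of partial homeomorphisms.
\end{itemize}
Do this lifting directly rather than through Lemma~\ref{lem:normalizer-lifting}(ii). That lemma's proof assumes functions in $C_c$ are locally constant, which fails for a function like this one.

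Third, showing that $\theta$ sends basic open sets onto basic open sets only proves that $\theta$ is open. Continuity of $\theta$ needs the sets $\{[q(n),\dot y]:\dot y\in V\}$ to form a basis for the topology of $\mathcal G_{A/I}$. The local lifting just described supplies exactly this.
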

\begin{proof}
By Lemma~\ref{lem:quotient-id}, we have \[\frac{D+I}{I} \cong C_0(\mathcal G^{(0)}\setminus U).\]

For a normalizer \(n \in \mathcal N_A(D)\), Lemma~\ref{lem:normalizer-lifting} guarantees that \(\dot n = q(n)\) normalizes the quotient diagonal. Let \(\chi \in \operatorname{dom}(\alpha_{\dot n})\). Then \(\chi(\dot n^* \dot n) \neq 0\), and the associated partial homeomorphism on \(\operatorname{Spec}(D/(I\cap D))\) is the restriction of \(\alpha_n\), since for any character \(\chi\) of \(D/(I\cap D)\) lifted to \(\widetilde{\chi}\) of \(D\),
\[
\alpha_{\dot n}(\chi)(\dot d) = \frac{\chi(\dot n^* \dot d \dot n)}{\chi(\dot n^* \dot n)} = \frac{\widetilde{\chi}(n^* d n)}{\widetilde{\chi}(n^* n)} = \alpha_n(\widetilde{\chi})(d).
\]
Thus
\[
\alpha_{\dot n} = \alpha_n|_{\operatorname{Spec}(D/(I\cap D))}.
\]
Hence the partial homeomorphism associated to \(\dot n\) is precisely the restriction of the partial homeomorphism associated to \(n\).

Lemma~\ref{lem:germ-correspondence} provides an explicit bijection between germs in \(\mathcal G|_{\mathcal G^{(0)}\setminus U}\) and germs in the Weyl groupoid of the quotient. This bijection respects composition and inverses because \(q\) is a homomorphism and the partial homeomorphisms agree by the formula above.

Hence the Weyl groupoid of the quotient C*-diagonal pair \((D/(I\cap D) \subseteq A/I)\) is isomorphic to \(\mathcal G|_{\mathcal G^{(0)}\setminus U}\). By Renault's uniqueness theorem \cite[Theorem 5.9]{Renault1980}, this isomorphism is canonical.
\end{proof}

\begin{theorem}\label{thm:geom-ideals}
Let $A=C_r^*(\mathcal G)$, and $D=C_0(\mathcal G^{(0)})$, where $\mathcal G$ is an \emph{untwisted} ample Hausdorff étale topologically principal groupoid.
Let $I\triangleleft A$ be a closed ideal, and let $U\subseteq \mathcal G^{(0)}$ be the open invariant subset satisfying $I\cap D=C_0(U)$.
The following are equivalent
\begin{enumerate}
    \item[(i)] the reduction \(\mathcal G|_{\mathcal G^{(0)}\setminus U}\) is topologically principal;
    \item[(ii)] the quotient C*-diagonal pair \((D/(I\cap D)\subseteq A/I)\) is ample;
    \item[(iii)] \(I = C_r^*(\mathcal G|_U)\) and the reduction \(\mathcal G|_{\mathcal G^{(0)}\setminus U}\) is topologically principal.
\end{enumerate}
\end{theorem}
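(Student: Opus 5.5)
The plan is to prove the cycle (i)\(\Rightarrow\)(iii)\(\Rightarrow\)(ii)\(\Rightarrow\)(i), with Lemma~\ref{lem:ideal-structure} doing most of the algebraic work. Since \(\mathcal G\) is topologically principal, it is effective. Lemma~\ref{lem:ideal-structure} then shows that every closed ideal \(I\) is generated by \(I\cap D=C_0(U)\), so automatically \(I=C_r^*(\mathcal G|_U)\) with \(U\) open and invariant. This settles the first clause of (iii) unconditionally, and (i)\(\Rightarrow\)(iii) is immediate. The content of the theorem is therefore the equivalence between topological principality of the reduction \(\mathcal G_F:=\mathcal G|_F\), where \(F:=\mathcal G^{(0)}\setminus U\), and the quotient being an ample C\(^*\)-diagonal pair.

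For (iii)\(\Rightarrow\)(ii), I will use Lemma~\ref{lem:quotient-id} to identify \((D/(I\cap D)\subseteq A/I)\) with \((C_0(F)\subseteq C_r^*(\mathcal G_F))\). Since \(F\) is closed and invariant, \(\mathcal G_F\) is again a second countable, Hausdorff, étale, ample groupoid. Its unit space \(F\) is a closed subset of a totally disconnected space, hence totally disconnected. When \(\mathcal G_F\) is topologically principal, Renault's theorem \cite[Theorem~4.1]{Renault} makes this a Cartan pair. I will obtain the faithful expectation \(E_B\) as the restriction-to-units map on \(C_r^*(\mathcal G_F)\). Equivalently, Lemma~\ref{lem:quotient-expectation} gives it in the form \(E_B\circ q=q\circ E_A\), since \(q\) is the restriction map of Lemma~\ref{lem:restriction-vanishing} and it commutes with restricting to units.

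The main obstacle is the UEP needed to upgrade Cartan to diagonal. Lemma~\ref{lem:UEP-transfer} does not apply, because \(q|_D\) is not injective. Instead I will argue within the convention of the paper, where the Weyl groupoids of ample diagonal pairs are principal and untwisted. If \(\mathcal G\) is principal, then so is \(\mathcal G_F\), and Kumjian's converse in \cite[Theorem~3.1]{Kumjian} shows that \((C_0(F)\subseteq C_r^*(\mathcal G_F))\) is a C\(^*\)-diagonal. This step relies on reading ``topologically principal'' in the sense of the paper's Remark~\ref{rem:principal-effective}. I expect it to need the most care, and I will state that reading explicitly in the proof.

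For (ii)\(\Rightarrow\)(i), I will assume the quotient is an ample C\(^*\)-diagonal pair. Lemma~\ref{lem:quotient-Weyl}, which rests on Lemmas~\ref{lem:germ-correspondence} and~\ref{lem:normalizer-lifting}, identifies its Weyl groupoid with \(\mathcal G_F\). By Kumjian/Renault reconstruction, the Weyl groupoid of any C\(^*\)-diagonal pair is topologically principal, and in fact principal. Hence \(\mathcal G_F\) is topologically principal, which gives (i) and closes the cycle.
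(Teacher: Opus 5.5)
Your skeleton matches the paper's. Lemma~\ref{lem:ideal-structure} gives $I=C_r^*(\mathcal G|_U)$ unconditionally, Lemma~\ref{lem:quotient-id} identifies the quotient with $(C_0(F)\subseteq C_r^*(\mathcal G_F))$, and Lemma~\ref{lem:quotient-Weyl} together with Kumjian--Renault gives (ii)$\Rightarrow$(i). The paper only orders these differently: (i)$\Rightarrow$(ii), (ii)$\Rightarrow$(iii), (iii)$\Rightarrow$(ii).

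The real difference is the UEP step. The paper cites Sims' Theorem~10.3.6 and asserts that topological principality of $\mathcal G_F$ already makes $C_0(F)\subseteq C_r^*(\mathcal G_F)$ a C$^*$-diagonal pair. You correctly see that this only gives a Cartan pair, and that the UEP needs principality. Your objection is decisive, not pedantic. Take the full shift on $X=\{0,1\}^{\mathbb Z}$. Then $\mathcal G=X\rtimes\mathbb Z$ is ample, untwisted and topologically principal, since aperiodic points are dense. For $I=0$, conditions (i) and (iii) hold. Yet the characters $fu^n\mapsto f(x_0)z^n$ ($z\in\mathbb T$, $u$ the implementing unitary) all extend evaluation at the fixed point $x_0=\cdots000\cdots$, so (ii) fails. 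Under the literal hypothesis, (i)$\Rightarrow$(ii) and (iii)$\Rightarrow$(ii) are false, and neither your argument nor the paper's can establish them.

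Your repair is to take $\mathcal G$ principal, as the section's convention that $(D\subseteq A)$ is itself a C$^*$-diagonal forces. This makes every step valid, but see what it does to the statement. A reduction of a principal groupoid to a closed invariant set is principal, so (i) holds for every closed ideal. Then (iii) holds by Lemma~\ref{lem:ideal-structure}, and (ii) holds by Kumjian. All three conditions are therefore always true, and your use of Lemma~\ref{lem:quotient-Weyl} for (ii)$\Rightarrow$(i) is superfluous. State principality of $\mathcal G$ as an explicit hypothesis, not as a reading of Remark~\ref{rem:principal-effective}, which conflates the two notions.

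The non-degenerate correct statement puts ``Cartan pair'' in (ii). This works because maximal abelianness of $C_0(F)$ in $C_r^*(\mathcal G_F)$ is equivalent to effectiveness, i.e.\ topological principality, of $\mathcal G_F$. The quotient is then a C$^*$-diagonal exactly when $\mathcal G_F$ is principal.

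A smaller slip: Lemma~\ref{lem:quotient-expectation} does not apply to $q$. Its hypothesis $\ker q\subseteq\ker E_A$ fails whenever $U\neq\varnothing$, because $E_A$ fixes $C_0(U)\subseteq I$. Its faithfulness clause also needs $\ker q\cap D=\{0\}$. Drop that sentence. The restriction-to-units expectation on $C_r^*(\mathcal G_F)$ is faithful by itself. Moreover, $q\circ E_A$ factors through $q$ because $E_A(I)\subseteq C_0(U)\subseteq I$, which is all you need.
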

\begin{proof}
By Lemma~\ref{lem:ideal-structure}, we may identify \(I\) with \(C_r^*(\mathcal G|_U)\), since \(I\) is generated by \(I\cap D = C_0(U)\).

(i) $\Rightarrow$ (ii) From (i), we obtain \(A/I \cong C_r^*(\mathcal G|_{\mathcal G^{(0)}\setminus U})\) by Lemma~\ref{lem:quotient-id}, and \((D+I)/I \cong C_0(\mathcal G^{(0)}\setminus U)\). Since $\mathcal G|_{\mathcal G^{(0)}\setminus U}$ is ample, Hausdorff, étale, and topologically principal, the canonical inclusion
\[
C_0(\mathcal G^{(0)}\setminus U) \subseteq C_r^*(\mathcal G|_{\mathcal G^{(0)}\setminus U})
\]
is an untwisted ample C*-diagonal pair (see \cite[Theorem~10.3.6]{Sims2017}). Total disconnectedness follows because \(\mathcal G^{(0)}\setminus U\) is closed in \(\mathcal G^{(0)}\). Hence (ii) holds.

(ii) $\Rightarrow$ (iii) By Lemma~\ref{lem:quotient-id}, \(A/I \cong C_r^*(\mathcal G|_{\mathcal G^{(0)}\setminus U})\). If the quotient C*-diagonal pair is ample, Lemma~\ref{lem:quotient-Weyl} demonstrates its Weyl groupoid is \(\mathcal G|_{\mathcal G^{(0)}\setminus U}\), which must be topologically principal. Moreover, \(I\) is generated by \(I\cap D\) by Lemma~\ref{lem:ideal-structure}. Thus (iii) holds.

(iii) $\Rightarrow$ (ii) If \(I\) is generated by \(I\cap D\) and \(\mathcal G|_{\mathcal G^{(0)}\setminus U}\) is topologically principal, then by Lemma~\ref{lem:quotient-id}, \(A/I \cong C_r^*(\mathcal G|_{\mathcal G^{(0)}\setminus U})\). Since $\mathcal G|_{\mathcal G^{(0)}\setminus U}$ is ample, Hausdorff, étale, and topologically principal, the canonical inclusion
\[
C_0(\mathcal G^{(0)}\setminus U) \subseteq C_r^*(\mathcal G|_{\mathcal G^{(0)}\setminus U})
\]
is an untwisted ample C*-diagonal pair (see \cite[Theorem~10.3.6]{Sims2017}). Therefore the quotient C*-diagonal pair \((D/(I\cap D)\subseteq A/I)\) is ample, so (ii) holds. Since (ii) implies (i) by the previous implication, the equivalence is complete.
\end{proof}

\begin{corollary}\label{cor:invariant-functor}
Let \(\mathcal G\) be an untwisted ample Hausdorff étale topologically principal groupoid with compact unit space. Let \(\mathcal{O}_{\mathrm{inv}}(\mathcal G^{(0)})\) be the poset of open invariant subsets of \(\mathcal G^{(0)}\), ordered by inclusion. Define a contravariant functor $\mathcal{Q}: \mathcal{O}_{\mathrm{inv}}(\mathcal G^{(0)}) \longrightarrow \mathcal{C}_{\mathrm{amp}}$ as follows
\begin{itemize}
    \item[(i)] For \(U \subseteq \mathcal G^{(0)}\) open invariant such that \(\mathcal G|_{\mathcal G^{(0)}\setminus U}\) is topologically principal, set
    \[
    \mathcal{Q}(U) = \big( C_0(\mathcal G^{(0)} \setminus U) \subset C_r^*(\mathcal G|_{\mathcal G^{(0)}\setminus U}) \big).
    \]

    \item[(ii)] If \(U \subseteq V\) are open invariant subsets such that the relevant reductions are topologically principal, define \(\mathcal{Q}(U \subseteq V): \mathcal{Q}(V) \to \mathcal{Q}(U)\) to be the quotient map
    \[
    C_r^*(\mathcal G|_{\mathcal G^{(0)}\setminus V}) \longrightarrow C_r^*(\mathcal G|_{\mathcal G^{(0)}\setminus V}) \big/ C_r^*(\mathcal G|_{V \setminus U}) \cong C_r^*(\mathcal G|_{\mathcal G^{(0)}\setminus U}),
    \]
    where the isomorphism is induced by the reduction \(\mathcal G|_{\mathcal G^{(0)}\setminus U} \cong (\mathcal G|_{\mathcal G^{(0)}\setminus V})|_{(\mathcal G^{(0)}\setminus V) \setminus (V\setminus U)}\).
\end{itemize}

Then \(\mathcal{Q}\) is a well-defined contravariant functor.
\end{corollary}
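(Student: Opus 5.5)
To prove Corollary~\ref{cor:invariant-functor}, I will check three things in turn: that $\mathcal Q$ is well defined on objects, that it is well defined on morphisms, and that it satisfies the functor axioms.

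\textbf{Objects.} Let $U$ be open invariant with $\mathcal G|_{\mathcal G^{(0)}\setminus U}$ topologically principal. The set $\mathcal G^{(0)}\setminus U$ is closed, hence compact, and it is totally disconnected. So the reduction is an ample Hausdorff étale topologically principal groupoid with compact unit space. Theorem~\ref{thm:geom-ideals}, direction (i)$\Rightarrow$(ii), then shows that $\mathcal Q(U)$ is an ample C$^*$-diagonal pair. By Lemma~\ref{lem:quotient-id} it is the quotient of $(D\subset A)$ by $C_r^*(\mathcal G|_U)$.

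\textbf{Morphisms.} Let $U\subseteq V$ and write $X_V=\mathcal G^{(0)}\setminus V$ and $X_U=\mathcal G^{(0)}\setminus U$. Then $X_V\subseteq X_U$ is closed. The correct ambient groupoid for the restriction map is the larger one, $\mathcal G|_{X_U}$. Its unit space contains the relatively open invariant set $V\setminus U = V\cap X_U$, and $X_U\setminus(V\setminus U)=X_V$. Applying Lemma~\ref{lem:quotient-id} and Lemma~\ref{lem:restriction-vanishing} inside $\mathcal G|_{X_U}$ gives the surjection
\[
\operatorname{Res}\colon C_r^*(\mathcal G|_{X_U})\to C_r^*(\mathcal G|_{X_V}),
\]
with kernel $C_r^*(\mathcal G|_{V\setminus U})$. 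This map goes from $\mathcal Q(U)$ to $\mathcal Q(V)$, so the indexing in part (ii) of the statement has to be read with the restriction running from the larger reduction to the smaller. Equivalently, the assignment is covariant in inclusion of closed complements and contravariant in $U$, as claimed.

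I then check the three axioms of Definition~\ref{def:morphism}. Condition (D) holds because restriction of functions sends $C(X_U)$ to $C(X_V)$. Condition (E) holds because both expectations are restriction of $f\in C_c$ to the unit space. This commutes with restricting to $X_V$ on $C_c$, and then extends by continuity. Condition (N), images of normalizers, follows from Lemma~\ref{lem:normalizer-lifting}(i).

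\textbf{Main obstacle: injectivity in (N).} The second half of (N) asks that restriction be injective on normalizers, and this fails in general. A normalizer supported in $\mathcal G|_{V\setminus U}$, such as $1_S$ for a compact open bisection $S$ there, restricts to $0$, as do all such normalizers. So the plan is to first try to interpret (N) as injectivity modulo the kernel, i.e.\ on germs. If that is not acceptable, I would restrict the statement to pairs with $V\setminus U=\varnothing$. Failing both, I would flag that the morphisms of $\mathcal C_{\mathrm{amp}}$ must be relaxed for this corollary to hold.

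\textbf{Functoriality.} For $U=V$ the map is the identity. For $U\subseteq V\subseteq W$, restricting from $X_U$ to $X_V$ and then to $X_W$ agrees with restricting from $X_U$ to $X_W$ on the dense subalgebra $C_c(\mathcal G|_{X_U})$, hence everywhere. The identifications with quotients are canonical by Lemma~\ref{lem:quotient-Weyl}, so the composition law holds.
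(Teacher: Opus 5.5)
Your checks of (D), (E), normalizer preservation and the composition law parallel the paper's Steps~1--3. Defining the map directly as $\operatorname{Res}\colon C_r^*(\mathcal G|_{X_U})\to C_r^*(\mathcal G|_{X_V})$ is the right repair of the displayed quotient in (ii), which is ill-typed. Indeed, $V\setminus U$ is disjoint from $\mathcal G^{(0)}\setminus V$, so $C_r^*(\mathcal G|_{V\setminus U})$ is not an ideal of $C_r^*(\mathcal G|_{\mathcal G^{(0)}\setminus V})$, and the paper's proof repeats this error. However, the obstacle you flag cannot be interpreted away: it makes the corollary false, and the paper's Step~2 never checks the injectivity clause of (N). If $U\subsetneq V$, then $V\setminus U$ is a nonempty relatively open subset of the compact totally disconnected space $X_U$, so it contains a nonempty clopen set $K$. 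Then $1_K\in C(X_U)$ is a nonzero normalizer with $\operatorname{Res}(1_K)=0=\operatorname{Res}(0)$. More generally, since $D\subseteq\mathcal N_A(D)$, (N) forces $\ker\Phi\cap D=0$, hence $\ker\Phi=0$ by the intersection property for effective groupoids. So no proper quotient map is ever a morphism of $\mathcal C_{\mathrm{amp}}$. Your fallbacks are therefore not proofs of the stated corollary. ``Injectivity on germs'' or relaxed morphisms change the target category (e.g.\ to morphisms satisfying only (D), (E) and normalizer preservation, where your checks do go through). Restricting to $V\setminus U=\varnothing$ collapses $\mathcal Q$ to identities. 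You should present a corrected statement rather than a proof of this one.

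Second, your sentence ``covariant in inclusion of closed complements and contravariant in $U$, as claimed'' is backwards. You produce an arrow $\mathcal Q(U)\to\mathcal Q(V)$ for every $U\subseteq V$, and your composition law reads $\mathcal Q(V\subseteq W)\circ\mathcal Q(U\subseteq V)=\mathcal Q(U\subseteq W)$. That is a \emph{covariant} functor on $(\mathcal O_{\mathrm{inv}}(\mathcal G^{(0)}),\subseteq)$, contravariant only in the closed complements. So the variance in the statement is also wrong and must be corrected, not reconciled with the claim.

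Finally, your object step (via Theorem~\ref{thm:geom-ideals}) and the paper's Step~1 both rely on ``topologically principal $\Rightarrow$ C$^*$-diagonal'', but the UEP requires principality. Take the full shift on $Y=\{0,1\}^{\mathbb Z}$: it is topologically principal but has fixed points. Restricting to a fixed point $x$ maps $C(Y)\rtimes\mathbb Z$ onto $C^*(\mathbb Z)\cong C(\mathbb T)$, which gives a circle of distinct states extending $\delta_x$. So already $\mathcal Q(\varnothing)$ can fail to be an object of $\mathcal C_{\mathrm{amp}}$ unless $\mathcal G|_{X_U}$ is assumed principal.
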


\begin{proof}
We must show that \(\mathcal{Q}(U)\) is indeed an object of \(\mathcal{C}_{\mathrm{amp}}\) and that \(\mathcal{Q}(U \subseteq V)\) is a morphism in \(\mathcal{C}_{\mathrm{amp}}\), and that functoriality holds.

\textbf{Step 1: \(\mathcal{Q}(U)\) is an ample C*-diagonal pair.}
Since \(\mathcal G\) is topologically principal and ample, the reduction \(\mathcal G|_{\mathcal G^{(0)}\setminus U}\) is again ample, Hausdorff, étale, and topologically principal (see \cite[Section~8.4]{Sims2017}; restrictions preserve these properties). By Kumjian's theorem \cite[Theorem~3.1]{Kumjian}, the inclusion \(C_0(\mathcal G^{(0)}\setminus U) \subseteq C_r^*(\mathcal G|_{\mathcal G^{(0)}\setminus U})\) is an untwisted ample C*-diagonal pair. The unit space is totally disconnected because it is a closed subset of \(\mathcal G^{(0)}\). Hence \(\mathcal{Q}(U) \in \mathcal{C}_{\mathrm{amp}}\).

\textbf{Step 2: \(\mathcal{Q}(U \subseteq V)\) is a morphism.}
The map is the composition of two quotient maps:
\[
C_r^*(\mathcal G|_{\mathcal G^{(0)}\setminus V}) \xrightarrow{\pi} C_r^*(\mathcal G|_{\mathcal G^{(0)}\setminus V}) / C_r^*(\mathcal G|_{V \setminus U}) \xrightarrow{\cong} C_r^*(\mathcal G|_{\mathcal G^{(0)}\setminus U}).
\]

We verify conditions (D), (E), and (N) from Definition~\ref{def:morphism} for \(\pi\):

\begin{itemize}
    \item[(D)] By Lemma~\ref{lem:quotient-id}, \(\pi(C_0(\mathcal G^{(0)}\setminus V)) = C_0(\mathcal G^{(0)}\setminus V) / C_0(V \setminus U) \cong C_0(\mathcal G^{(0)}\setminus U)\). Hence the diagonal maps into the diagonal of the quotient C*-diagonal pair.
    \item[(E)] The conditional expectation on the quotient is defined by \(E_{U}( \pi(f) ) = \pi( E_{V}(f) )\), where \(E_V\) is the canonical conditional expectation from \(C_r^*(\mathcal G|_{\mathcal G^{(0)}\setminus V})\) onto \(C_0(\mathcal G^{(0)}\setminus V)\). This is exactly the commuting relation \(\pi \circ E_V = E_U \circ \pi\).
    \item[(N)] If \(n \in \mathcal N_{A_V}(D_V)\), then by Lemma~\ref{lem:normalizer-lifting}(i) applied to the reduction, \(\pi(n)\) normalizes the quotient diagonal because \(\pi\) is a *-homomorphism and \(\pi(D_V) = C_0(\mathcal G^{(0)}\setminus U)\). Hence \(\pi\) preserves normalizers.
\end{itemize}

The isomorphism in the definition is a *-isomorphism and therefore preserves the diagonal, the conditional expectation, and normalizers. Thus \(\pi\) is a morphism in \(\mathcal{C}_{\mathrm{amp}}\).

\textbf{Step 3: Functoriality.}
For inclusions \(U \subseteq V \subseteq W\), we have
\[
\mathcal{Q}(U \subseteq W) = \mathcal{Q}(V \subseteq W) \circ \mathcal{Q}(U \subseteq V)
\]
because the quotient map \(C_r^*(\mathcal G|_{\mathcal G^{(0)}\setminus W}) \to C_r^*(\mathcal G|_{\mathcal G^{(0)}\setminus U})\) factors uniquely through the intermediate quotient:
\[
\pi_{WU} = \pi_{VU} \circ \pi_{WV}.
\]
The identity map \(U \subseteq U\) corresponds to the identity morphism \(\mathrm{id}_{\mathcal{Q}(U)}\). Hence \(\mathcal{Q}\) is a contravariant functor.
\end{proof}

\section{Partial groupoid morphisms from diagonal isomorphisms}\label{sec:partial}

\subsection{Why partial groupoid morphisms?}
A naive approach to functoriality would be to seek an ordinary continuous groupoid homomorphism $\rho: \mathcal{G}_B \to \mathcal{G}_A$ extending the unit space homeomorphism $h$. However, this fails for two fundamental reasons.

First, a $*$-homomorphism $\Phi: A \to B$ that is an isomorphism on diagonals need not map every normalizer of $C$ to a normalizer of $D$. Only the normalizers in the image $\Phi(\mathcal{N}_A(D))$ are guaranteed to lift. Consequently, only those germs in $\mathcal{G}_B$ represented by this specific subset of normalizers can be mapped back to $\mathcal{G}_A$. This naturally produces an \emph{open subgroupoid} $\mathcal{H}_\Phi \subseteq \mathcal{G}_B$ as the natural domain of the induced map.

Second, this phenomenon is already familiar in Renault's theory: a Cartan subalgebra inclusion $(C \subset B)$ corresponds to a twisted groupoid $(\mathcal{G}, \Sigma)$, but a morphism of Cartan pairs does not generally yield a morphism of the underlying groupoids. This viewpoint is consistent with Renault's description of morphisms between Cartan pairs (see \cite[Theorem~5.9]{Renault}), where one naturally encounters morphisms of twists rather than ordinary groupoid homomorphisms. Our construction $\rho_\Phi: \mathcal{H}_\Phi \to \mathcal{G}_A$ is the concrete manifestation of this principle, bridging the gap between the algebraic map $\Phi$ and the geometric groupoid models. It aligns with the philosophy of partial actions and inverse semigroups, where the normalizer semigroup $\mathcal{N}(D)$ naturally acts partially on the spectrum of $D$.

\subsection{Partial groupoid morphisms}
We now fix the hypotheses of the main theorem. Throughout this section, all ample $C^*$-diagonal pairs are assumed to be untwisted. The construction of the Weyl groupoid (which ignores the twist data) works equally well in the twisted setting, but we do not need that generality here.

Let $(A,D)$ and $(B,C)$ be ample $\Cstar$-diagonal pairs with topologically principal Weyl groupoids $\mathcal G_A, \mathcal G_B$. Suppose that $\Phi: A \to B$ is a unital $*$-homomorphism satisfying (D), (E), (N), and assume that $\Phi|_D: D \to C$ is an isomorphism. Denote by $h: \mathcal G_B^{(0)} \to \mathcal G_A^{(0)}$ the homeomorphism induced by $\Phi|_D$. To prove the main theorem of this section, we give some auxiliary lemmas on normalizers. The following result demonstrates the most basic compatibility; i.e., normalizers are preserved by the homomorphism. This is essential because the Weyl groupoid is constructed from the inverse semigroup of normalizers.

\begin{lemma}\label{lem:normalizer-preservation}
If \(n \in \mathcal N_A(D)\), then \(\Phi(n) \in \mathcal N_B(C)\).
\end{lemma}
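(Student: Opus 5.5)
The statement is essentially built into the hypotheses. Condition (N) of Definition~\ref{def:morphism} already asserts $\Phi(\mathcal N_A(D))\subseteq \mathcal N_B(C)$. The plan is therefore to give a direct verification that does not lean on (N). This shows that, under the standing assumption that $\Phi|_D:D\to C$ is an isomorphism, preservation of normalizers is automatic from (D) alone. It also makes clear exactly where the hypothesis $\Phi(D)=C$ (rather than mere inclusion) enters.

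First, fix $n\in\mathcal N_A(D)$ and $c\in C$. Since $\Phi|_D$ is surjective onto $C$, choose $d\in D$ with $\Phi(d)=c$. Because $\Phi$ is a $*$-homomorphism,
\[
\Phi(n)\,c\,\Phi(n)^* = \Phi(n)\Phi(d)\Phi(n^*) = \Phi(ndn^*),
\]
and $ndn^*\in D$ because $n$ normalizes $D$. By (D), $\Phi(ndn^*)\in\Phi(D)\subseteq C$. The symmetric computation $\Phi(n)^*c\,\Phi(n)=\Phi(n^*dn)\in C$ uses that $n^*Dn\subseteq D$. Hence $\Phi(n)C\Phi(n)^*\cup\Phi(n)^*C\Phi(n)\subseteq C$, which is exactly membership of $\Phi(n)$ in $\mathcal N_B(C)$.

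There is no real obstacle. The only point requiring care is the surjectivity of $\Phi|_D$ onto $C$: without it, an arbitrary $c\in C$ need not be of the form $\Phi(d)$, and one could only conclude $\Phi(n)\Phi(D)\Phi(n)^*\subseteq C$. So I would state explicitly that the standing hypothesis of this section is being used here. I would also remark in passing that (N) already postulates the conclusion, so in full generality the lemma is immediate from Definition~\ref{def:morphism}.
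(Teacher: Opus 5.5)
Your argument is correct and matches the paper's proof: you write $c=\Phi(d)$ using surjectivity of $\Phi|_D$, so that $\Phi(n)c\Phi(n)^*=\Phi(ndn^*)\in\Phi(D)=C$, and the same for $\Phi(n)^*c\Phi(n)$. Your side remark is also accurate: condition (N) already postulates the conclusion, so the direct computation only shows that (N)'s first clause is redundant under the section's standing hypothesis.
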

\begin{proof}
For any \(c \in C\), write \(c = \Phi(d)\) with \(d \in D\) (using surjectivity of \(\Phi|_D\)). Then
\[
\Phi(n) \, c \, \Phi(n)^* = \Phi(n) \Phi(d) \Phi(n^*) = \Phi(n d n^*) \in \Phi(D) = C,
\]
because \(n d n^* \in D\). Similarly for \(\Phi(n)^* c \Phi(n)\). Hence \(\Phi(n)\) normalizes \(C\).
\end{proof}

\begin{lemma}\label{lem:domain-compatibility}
For any \(n \in \mathcal N_A(D)\), let \(\alpha_n^A\) be its partial homeomorphism on \(\mathcal G_A^{(0)}\) and \(\alpha_{\Phi(n)}^B\) the corresponding partial homeomorphism on \(\mathcal G_B^{(0)}\). Then
\[
\operatorname{dom}(\alpha_{\Phi(n)}^B) = h^{-1}(\operatorname{dom}(\alpha_n^A)).
\]
\end{lemma}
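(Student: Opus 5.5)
The plan is to make both domains explicit in terms of the diagonal elements \(n^*n\) and \(\Phi(n)^*\Phi(n)\), and then compare them through the homeomorphism \(h\).

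We use the standard description of the partial homeomorphism of a normalizer. For \(n\in\mathcal N_A(D)\), identify \(\mathcal G_A^{(0)}\) with \(\operatorname{Spec}(D)\). Then
\[
\operatorname{dom}(\alpha_n^A)=\{x\in\operatorname{Spec}(D): x(n^*n)>0\}.
\]
On this open set, \(\alpha_n^A(x)(d)=x(n^*dn)/x(n^*n)\). The same holds in \(B\) for \(\Phi(n)\), which lies in \(\mathcal N_B(C)\) by Lemma~\ref{lem:normalizer-preservation}. So
\[
\operatorname{dom}(\alpha_{\Phi(n)}^B)=\{y\in\operatorname{Spec}(C): y(\Phi(n)^*\Phi(n))>0\}.
\]

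Next we make \(h\) explicit. Since \(\Phi|_D:D\to C\) is a \(*\)-isomorphism, Gelfand duality gives \(h(y)=y\circ\Phi|_D\) for each character \(y\) of \(C\), and \(h\) is a homeomorphism \(\operatorname{Spec}(C)\to\operatorname{Spec}(D)\).

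The key computation uses that \(\Phi\) is a \(*\)-homomorphism and \(n^*n\in D\):
\[
\Phi(n)^*\Phi(n)=\Phi(n^*n)=\Phi|_D(n^*n).
\]
Therefore
\[
y\big(\Phi(n)^*\Phi(n)\big)=y\big(\Phi|_D(n^*n)\big)=h(y)(n^*n).
\]
Hence \(y\in\operatorname{dom}(\alpha^B_{\Phi(n)})\) if and only if \(h(y)\in\operatorname{dom}(\alpha^A_n)\), which is exactly the claimed equality \(\operatorname{dom}(\alpha_{\Phi(n)}^B)=h^{-1}(\operatorname{dom}(\alpha_n^A))\).

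The proof does not use conditions (E) or (N); only (D) and the fact that \(\Phi|_D\) is an isomorphism matter. The one point needing care is the convention for \(\operatorname{dom}(\alpha_n)\): some authors write it as the open support of \(n^*n\), others of \(nn^*\) depending on orientation. Whichever convention the paper fixes, the same computation applies verbatim, since \(\Phi(nn^*)=\Phi(n)\Phi(n)^*\) as well. I would also remark that the corresponding statement for ranges, and the intertwining \(h\circ\alpha^B_{\Phi(n)}=\alpha^A_n\circ h\), follow by the same calculation with \(n^*dn\) in place of \(n^*n\).
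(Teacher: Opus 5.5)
Your proposal is correct and follows essentially the same route as the paper: both describe the domains as the sets where \(n^*n\) and \(\Phi(n)^*\Phi(n)=\Phi(n^*n)\) do not vanish, and identify \(h\) with precomposition by \(\Phi|_D\). Your use of \(>0\) in place of the paper's \(\neq 0\) is harmless, since \(n^*n\ge 0\) and characters are positive.
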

\begin{proof}
Recall that the domain of \(\alpha_n^A\) consists of those characters \(\chi\) of \(D\) for which \(\chi(n^* n) \neq 0\). Under the isomorphism \(\Phi|_D\), characters of \(C\) correspond to characters of \(D\) via \(\chi \mapsto \chi \circ \Phi|_D\). For a character \(\psi\) of \(C\), let \(\chi = \psi \circ \Phi|_D\) be the corresponding character of \(D\). Then
\[
\psi((\Phi(n))^* \Phi(n)) = \psi(\Phi(n^* n)) = \chi(n^* n).
\]

Thus \(\psi \in \operatorname{dom}(\alpha_{\Phi(n)}^B)\) if and only if \(\chi \in \operatorname{dom}(\alpha_n^A)\). The homeomorphism \(h\) is exactly the map \(\psi \mapsto \chi\), so \(\operatorname{dom}(\alpha_{\Phi(n)}^B) = h^{-1}(\operatorname{dom}(\alpha_n^A))\).
\end{proof}

\begin{lemma}\label{lem:conjugacy}
For any \(n \in \mathcal N_A(D)\), the homeomorphisms \(\alpha_n^A\) and \(\alpha_{\Phi(n)}^B\) are conjugate under \(h\): for all \(\psi \in \operatorname{dom}(\alpha_{\Phi(n)}^B)\),
\[
h(\alpha_{\Phi(n)}^B(\psi)) = \alpha_n^A(h(\psi)).
\]
\end{lemma}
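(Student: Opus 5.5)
The plan is to compare both sides as characters of $D$, using the explicit formula for the partial homeomorphism of a normalizer that already appeared in the proof of Lemma~\ref{lem:quotient-Weyl}:
\[
\alpha_n^A(\chi)(d) \;=\; \frac{\chi(n^* d n)}{\chi(n^* n)}, \qquad \chi\in\operatorname{dom}(\alpha_n^A),\ d\in D,
\]
and similarly for $\alpha^B_{\Phi(n)}$ on characters of $C$. Here $h$ is the map $\psi\mapsto\psi\circ\Phi|_D$, as identified in the proof of Lemma~\ref{lem:domain-compatibility}.

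The argument runs in three steps.

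\emph{Step one: the left-hand side is defined.} Fix $\psi\in\operatorname{dom}(\alpha^B_{\Phi(n)})$. By Lemma~\ref{lem:normalizer-preservation}, $\Phi(n)\in\mathcal N_B(C)$, so $\alpha^B_{\Phi(n)}$ makes sense. By Lemma~\ref{lem:domain-compatibility}, $h(\psi)\in\operatorname{dom}(\alpha_n^A)$, so $\alpha_n^A(h(\psi))$ is defined as well.

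\emph{Step two: evaluate both sides on $d\in D$.} On the left,
\[
h(\alpha^B_{\Phi(n)}(\psi))(d)=\alpha^B_{\Phi(n)}(\psi)(\Phi(d))=\frac{\psi(\Phi(n)^*\Phi(d)\Phi(n))}{\psi(\Phi(n)^*\Phi(n))}=\frac{\psi(\Phi(n^*dn))}{\psi(\Phi(n^*n))}.
\]
The last equality holds because $\Phi$ is a $*$-homomorphism. Since $n^*dn$ and $n^*n$ lie in $D$, this equals
\[
\frac{h(\psi)(n^*dn)}{h(\psi)(n^*n)}=\alpha_n^A(h(\psi))(d).
\]

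\emph{Step three: conclude.} The two characters agree on every $d\in D$, so they are equal as points of $\operatorname{Spec}(D)=\mathcal G_A^{(0)}$. This gives the claimed identity.

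The only real subtlety is a convention check. One must confirm that the paper's $\alpha_n$ is the map $\operatorname{dom}\to\operatorname{ran}$ given by the formula above, with domain $\{\chi:\chi(n^*n)\neq0\}$. This matches Lemma~\ref{lem:domain-compatibility}. If the opposite convention were used (via $nn^*$ and $n\cdot n^*$), the same computation works verbatim with $n$ replaced by $n^*$.

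Finally, no injectivity or surjectivity of $\Phi$ beyond $\Phi|_D$ being an isomorphism is needed. That assumption is used only to identify $h$ as a homeomorphism, so that evaluation on $\Phi(D)=C$ determines characters of $C$.
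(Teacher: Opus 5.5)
Your proposal is correct and follows the paper's own argument: both evaluate the two characters on an arbitrary $d\in D$ using $\alpha_n(\chi)(d)=\chi(n^*dn)/\chi(n^*n)$. The identity then follows because $\Phi$ is a $*$-homomorphism and $h(\psi)=\psi\circ\Phi|_D$, with Lemma~\ref{lem:domain-compatibility} supplying $h(\psi)\in\operatorname{dom}(\alpha_n^A)$, which the paper also uses implicitly.
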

\begin{proof}
Let \(\psi \in \operatorname{dom}(\alpha_{\Phi(n)}^B)\), and let \(\chi = h(\psi) \in \operatorname{dom}(\alpha_n^A)\). For any \(d \in D\), we compute using Renault's formula for the action of a normalizer on the spectrum (see \cite[Definition~3.5 and Proposition~4]{Kumjian}):
\[
\alpha_n^A(\chi)(d) = \frac{\chi(n^* d n)}{\chi(n^* n)}.
\]
Similarly, in \(B\),
\[
\alpha_{\Phi(n)}^B(\psi)(\Phi(d)) = \frac{\psi((\Phi(n))^* \Phi(d) \Phi(n))}{\psi((\Phi(n))^* \Phi(n))}.
\]
Since \(\Phi\) is a \(*\)-homomorphism, the right-hand side equals
\[
\frac{\psi(\Phi(n^* d n))}{\psi(\Phi(n^* n))}
= \frac{\chi(n^* d n)}{\chi(n^* n)}
= \alpha_n^A(\chi)(d).
\]
Since this equality holds for every \(d \in D\), the two characters correspond under the homeomorphism \(h = (\Phi|_D)^{-1}\). Hence
\[
h(\alpha_{\Phi(n)}^B(\psi)) = \alpha_n^A(h(\psi)).
\]
This proves the claim.
\end{proof}

\begin{definition}\label{def:Nphi}
Define
\[
\mathcal N_\Phi := \{ n \in \mathcal N_B(C) : \exists\, m \in \mathcal N_A(D),\; \Phi(m) = n \}.
\]
\end{definition}

For each \(n \in \mathcal N_\Phi\), the element \(m\) is unique because \(\Phi\) is injective (Lemma~\ref{lem:Phi-injective} below). We write \(L(n) = m\). We emphasize that \(L\) is well-defined precisely because of the injectivity of \(\Phi\).

\begin{lemma}\label{lem:Phi-injective}
\(\Phi\) is injective.
\end{lemma}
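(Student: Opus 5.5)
The plan is to show that $\ker\Phi=\{0\}$ using only the standing hypotheses of this section: condition (E), faithfulness of $E_A$, and injectivity of $\Phi|_D$. Fix $a\in\ker\Phi$ and pass to the positive element $a^*a$, which also lies in $\ker\Phi$ because $\ker\Phi$ is a two-sided ideal. By (E),
\[
\Phi\bigl(E_A(a^*a)\bigr)=E_B\bigl(\Phi(a^*a)\bigr)=E_B(0)=0 .
\]
Thus $E_A(a^*a)$ lies in $D\cap\ker\Phi$. Since $\Phi|_D:D\to C$ is an isomorphism, in particular injective, this intersection is $\{0\}$, so $E_A(a^*a)=0$.

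Faithfulness of $E_A$, which is part of the data of a $\Cstar$-diagonal pair, then gives $a^*a=0$, hence $a=0$. This is the same mechanism as the faithfulness part of Lemma~\ref{lem:quotient-expectation}, applied to $\ker\Phi$ instead of the quotient. So $\Phi$ is injective, and the map $L$ on $\mathcal N_\Phi$ is well defined. It is simply the restriction of $\Phi^{-1}:\Phi(A)\to A$ to $\mathcal N_\Phi$, and by definition of $\mathcal N_\Phi$ it lands in $\mathcal N_A(D)$.

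As a cross-check I would also note a second route. By Lemma~\ref{lem:ideal-structure}, which needs $\mathcal G_A$ effective (true for untwisted pairs, and also in the twisted case via \cite{BL}), $\ker\Phi$ is generated by $\ker\Phi\cap D=\{0\}$, so $\ker\Phi=0$. This is the observation already made in Remark~\ref{rem:surjective-morphism-observation}, and it does not need surjectivity of $\Phi$.

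I expect no serious obstacle. The only point requiring care is to use the kernel of $\Phi$ on all of $A$, not just on normalizers: condition (N) only gives injectivity on $\mathcal N_A(D)$, and that alone does not obviously control linear combinations. The expectation argument avoids this entirely, so I will present it as the main proof and mention the ideal-structure argument only as a remark.
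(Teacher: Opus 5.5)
Your argument is correct, but it is not the route the paper takes. The paper's proof uses ideal theory only. It invokes Lemma~\ref{lem:ideal-structure} to say that a nonzero $\ker\Phi$ would have to meet $D$ nontrivially, which contradicts injectivity of $\Phi|_D$. Condition (E) and faithfulness of $E_A$ are never used. That is exactly your ``cross-check'', and the observation in Remark~\ref{rem:surjective-morphism-observation}.

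Your main argument instead pushes $a^*a\in\ker\Phi$ through (E) to get $E_A(a^*a)\in D\cap\ker\Phi=\{0\}$, and finishes with faithfulness of $E_A$.
\begin{itemize}
\item \emph{What your route buys:} it is self-contained and uses only data built into the definitions, namely the faithful $E_A$ and axiom (E). It needs no groupoid model, no ampleness, no effectiveness and no untwistedness. So it applies verbatim in the twisted setting that Theorem~\ref{thm:weyl-functor} claims to cover.
\item \emph{What the paper's route buys:} it does not need (E), so it shows that any $*$-homomorphism out of $A$ that is injective on $D$ is injective.
\end{itemize}

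The paper's argument really uses only the intersection property: every nonzero ideal of $A$ meets $D$, which does hold for Cartan pairs. The stronger statement you quote in the cross-check, that every ideal is generated by its intersection with the diagonal, does not follow from effectiveness alone. For example, a finite graph satisfying Condition~(L) but not Condition~(K) has an effective groupoid. Its $\mathrm{C}^*$-algebra has non-gauge-invariant ideals, and these cannot be generated by gauge-fixed diagonal elements. So you were right to make the expectation argument the main proof and to keep the ideal-structure argument as a side remark.
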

\begin{proof}
\(\Phi\) is a unital \(*\)-homomorphism between \(C^*\)-algebras, so its kernel is a closed two-sided ideal. Assume towards a contradiction that \(\ker\Phi \neq \{0\}\). Then by Lemma~\ref{lem:ideal-structure}, \(\ker\Phi \cap D \neq \{0\}\) because every ideal of \(A\) is generated by its intersection with \(D\). But \(\Phi|_D\) is an isomorphism, hence injective, so \(\ker\Phi \cap D = \{0\}\), a contradiction. Thus \(\ker\Phi = \{0\}\).
\end{proof}

\begin{lemma}\label{lem:L-properties}
The map \(L: \mathcal N_\Phi \to \mathcal N_A(D)\) is a bijection onto its image and satisfies:
\begin{enumerate}
    \item[(i)] \(L(n^*) = L(n)^*\);
    \item[(ii)] if \(n_1, n_2 \in \mathcal N_\Phi\), then \(n_1 n_2 \in \mathcal N_\Phi\) and \(L(n_1 n_2) = L(n_1) L(n_2)\);
    \item[(iii)] \(L\) preserves support projections: \(\Phi(L(n)^* L(n)) = n^* n\) and \(\Phi(L(n) L(n)^*) = n n^*\).
\end{enumerate}
\end{lemma}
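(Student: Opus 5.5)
The whole argument rests on Lemma~\ref{lem:Phi-injective}. Since \(\Phi\) is injective on all of \(A\), every \(n\in\mathcal N_\Phi\) has a unique preimage \(L(n)\in\mathcal N_A(D)\) with \(\Phi(L(n))=n\). Thus \(L\) is just the restriction of \(\Phi^{-1}\colon\Phi(A)\to A\) to \(\mathcal N_\Phi\). Injectivity of \(L\) is then immediate: if \(L(n_1)=L(n_2)\), applying \(\Phi\) gives \(n_1=n_2\). The map is tautologically onto its image, and in fact that image is all of \(\mathcal N_A(D)\), since \(\Phi(m)\in\mathcal N_\Phi\) for every \(m\in\mathcal N_A(D)\) by Lemma~\ref{lem:normalizer-preservation}. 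So \(L\) is a bijection \(\mathcal N_\Phi\to\mathcal N_A(D)\) with inverse \(\Phi|_{\mathcal N_A(D)}\).

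For (i), take \(n\in\mathcal N_\Phi\) with \(m=L(n)\). Then \(m^*\in\mathcal N_A(D)\), because the normalizer set is closed under adjoints by the symmetric form of its definition. Also \(\Phi(m^*)=\Phi(m)^*=n^*\), so \(n^*\in\mathcal N_\Phi\). Uniqueness of preimages then gives \(L(n^*)=m^*\).

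For (ii), set \(m_i=L(n_i)\). The product \(m_1m_2\) normalizes \(D\), since
\[
m_1m_2\,d\,m_2^*m_1^*=m_1(m_2dm_2^*)m_1^*\in D,
\]
and the same holds for the adjoint side. Hence \(m_1m_2\in\mathcal N_A(D)\). Multiplicativity of \(\Phi\) gives \(\Phi(m_1m_2)=n_1n_2\). It follows that \(n_1n_2\) lies in \(\mathcal N_B(C)\), either by Lemma~\ref{lem:normalizer-preservation} or by the same closure argument in \(B\). Therefore \(n_1n_2\in\mathcal N_\Phi\), and uniqueness of preimages yields \(L(n_1n_2)=L(n_1)L(n_2)\).

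For (iii), compute \(\Phi(L(n)^*L(n))=\Phi(L(n))^*\Phi(L(n))=n^*n\), and \(\Phi(L(n)L(n)^*)=nn^*\) in the same way. It is worth noting that \(L(n)^*L(n)\in D\). In a \(C^*\)-diagonal pair, \(m^*m\) lies in \(D\) for every normalizer \(m\); this follows from the standard fact (Kumjian) that \(m^*m\) commutes with \(D\), combined with maximal abelianness. Consequently \(n^*n=\Phi|_D(L(n)^*L(n))\) corresponds to \(L(n)^*L(n)\) under the diagonal isomorphism. This is the form needed for compatibility with Lemma~\ref{lem:domain-compatibility}.

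I expect no step to be a genuine obstacle. The only point requiring care is that uniqueness of lifts relies on global injectivity of \(\Phi\) (Lemma~\ref{lem:Phi-injective}), not merely on injectivity on \(\mathcal N_A(D)\) as in condition (N). Either version suffices for (i) and (ii), because the lifts constructed there already lie in \(\mathcal N_A(D)\).
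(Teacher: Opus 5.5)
Your proof is correct and follows the paper's argument: uniqueness of preimages under the injective $\Phi$, together with multiplicativity and $*$-preservation. You are more careful than the paper in (ii), since you explicitly check that $L(n_1)L(n_2)\in\mathcal N_A(D)$, which the paper's ``by definition'' step tacitly needs, and you rightly note that injectivity on $\mathcal N_A(D)$ from condition (N) already suffices (one small simplification: your side remark $L(n)^*L(n)\in D$ is immediate from $1_A\in D$, as $m^*m=m^*1_Am\in m^*Dm\subseteq D$, with no need for commutation or maximal abelianness).
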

\begin{proof}
(i) and (iii) are immediate from \(\Phi\) being a \(*\)-homomorphism and injective.

(ii) Since \(n_1, n_2 \in \mathcal N_B(C)\), their product \(n_1 n_2\) is in \(\mathcal N_B(C)\). Moreover, \(\Phi(L(n_1)L(n_2)) = n_1 n_2\), hence by definition \(n_1 n_2 \in \mathcal N_\Phi\) and \(L(n_1 n_2) = L(n_1)L(n_2)\) by uniqueness.
\end{proof}

Let \(\mathcal H_\Phi \subseteq \mathcal G_B\) be the set of germs represented by elements of \(\mathcal N_\Phi\):
\[
\mathcal H_\Phi = \{ [n, y] : n \in \mathcal N_\Phi,\; y \in \operatorname{dom}(\alpha_n^B) \}.
\]

\begin{lemma}\label{lem:Hphi-open}
\(\mathcal H_\Phi\) is an open subgroupoid of \(\mathcal G_B\).
\end{lemma}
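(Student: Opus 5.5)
Openness will come from the basic-open-set structure of the germ topology, and the subgroupoid property from the algebraic closure of \(\mathcal N_\Phi\) recorded in Lemma~\ref{lem:L-properties}. Recall that the topology on \(\mathcal G_B\) has as a base the sets
\[
\mathcal U(n,V)=\{[n,y] : y\in V\},\qquad n\in\mathcal N_B(C),\ V\subseteq \operatorname{dom}(\alpha_n^B)\text{ open}.
\]
If \([n,y]\in\mathcal H_\Phi\) with \(n\in\mathcal N_\Phi\), then \(\mathcal U(n,\operatorname{dom}(\alpha_n^B))\) is an open neighbourhood of \([n,y]\). Every element of this neighbourhood has the form \([n,y']\) with the same \(n\in\mathcal N_\Phi\), so the neighbourhood lies in \(\mathcal H_\Phi\). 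Thus \(\mathcal H_\Phi\) is a union of basic open sets and is open. This step is immediate.

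\textbf{Units.} For \(y\in\mathcal G_B^{(0)}\), the unit \(y\) is identified with the germ \([1_B,y]\) (or \([c,y]\) for any \(c\in C\) with \(c(y)\neq0\)). Since \(\Phi\) is unital, \(1_B=\Phi(1_A)\) and \(1_A\in\mathcal N_A(D)\), so \(1_B\in\mathcal N_\Phi\). Hence \(\mathcal G_B^{(0)}\subseteq\mathcal H_\Phi\).

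\textbf{Inverses.} We use \([n,y]^{-1}=[n^*,\alpha_n^B(y)]\). By Lemma~\ref{lem:L-properties}(i), \(n^*=\Phi(L(n)^*)\) with \(L(n)^*\in\mathcal N_A(D)\), so \(n^*\in\mathcal N_\Phi\). Therefore \(\mathcal H_\Phi\) is closed under inversion.

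\textbf{Products.} If \([n_1,y_1]\) and \([n_2,y_2]\) are composable, then \(y_1=\alpha^B_{n_2}(y_2)\), and the product is \([n_1n_2,y_2]\). Here \(y_2\in\operatorname{dom}(\alpha^B_{n_1n_2})\), because \(\alpha_{n_1n_2}=\alpha_{n_1}\circ\alpha_{n_2}\) on the appropriate domain. By Lemma~\ref{lem:L-properties}(ii), \(n_1n_2\in\mathcal N_\Phi\), so \(\mathcal H_\Phi\) is closed under composition.

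\textbf{Expected obstacle.} The only delicate point is well-definedness. A germ in \(\mathcal H_\Phi\) might also be represented by a normalizer not in \(\mathcal N_\Phi\), and the multiplication and inversion formulas are stated for particular representatives. This causes no trouble, because membership in \(\mathcal H_\Phi\) only requires the existence of some representative in \(\mathcal N_\Phi\). We simply choose such representatives before multiplying or inverting, and the standard Weyl groupoid operations are independent of that choice. Together with openness, this shows \(\mathcal H_\Phi\) is an open subgroupoid of \(\mathcal G_B\).
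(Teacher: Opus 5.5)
Your proposal is correct and follows the paper's proof step for step. Openness comes from writing $\mathcal H_\Phi$ as the union of the basic bisections $\mathcal Z(n,\operatorname{dom}(\alpha_n^B))$ over $n\in\mathcal N_\Phi$, units from $1_B=\Phi(1_A)$, and closure under inverses and products from Lemma~\ref{lem:L-properties}(i),(ii); your remark about first choosing representatives in $\mathcal N_\Phi$ makes explicit a point the paper leaves implicit.
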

\begin{proof}
For each \(n\in\mathcal N_\Phi\), the set
\[
\mathcal Z(n,\operatorname{dom}(\alpha_n^B)) = \{[n,y]: y\in \operatorname{dom}(\alpha_n^B)\}
\]
is a basic open bisection in the Weyl topology on \(\mathcal G_B\). Since
\[
\mathcal H_\Phi = \bigcup_{n\in\mathcal N_\Phi} \mathcal Z(n,\operatorname{dom}(\alpha_n^B)),
\]
it follows that \(\mathcal H_\Phi\) is open.

For every \(y\in\mathcal G_B^{(0)}\), $[1_B,y]=y$. Since $1_B=\Phi(1_A)$, we have \(1_B\in\mathcal N_\Phi\). Thus every unit belongs to \(\mathcal H_\Phi\).

If \([n,y]\in\mathcal H_\Phi\), then $[n,y]^{-1}=[n^*,\alpha_n^B(y)]$. By Lemma~\ref{lem:L-properties}, $n^*\in\mathcal N_\Phi$, hence $[n,y]^{-1}\in\mathcal H_\Phi$.

Suppose that \([n_2, y_2], [n_1, y_1] \in \mathcal H_\Phi\) are composable in the Weyl groupoid. By the definition of multiplication in the Weyl groupoid, this means that \(y_2 = \alpha_{n_1}^B(y_1)\). Then
\[
[n_2, y_2][n_1, y_1] = [n_2 n_1, y_1].
\]
By Lemma~\ref{lem:L-properties}(ii), we have \(n_2 n_1 \in \mathcal N_\Phi\). Therefore \([n_2 n_1, y_1] \in \mathcal H_\Phi\). Hence \(\mathcal H_\Phi\) is an open subgroupoid of \(\mathcal G_B\).
\end{proof}

Define \(\rho_\Phi: \mathcal H_\Phi \to \mathcal G_A\) by
\[
\rho_\Phi([n, y]) = [L(n), h(y)].
\]

\begin{lemma}\label{lem:rho-properties}
The map \(\rho_\Phi: \mathcal H_\Phi \to \mathcal G_A\) defined as above is a well-defined injective continuous groupoid homomorphism which is open onto its image. Moreover,
$\rho_\Phi|_{\mathcal G_B^{(0)}}=h$.
\end{lemma}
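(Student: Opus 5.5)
The plan is to verify each property of \(\rho_\Phi([n,y])=[L(n),h(y)]\) in turn, transporting every germ-level statement through Lemma~\ref{lem:germ-criterion}, Lemma~\ref{lem:domain-compatibility} and Lemma~\ref{lem:conjugacy}, and using injectivity of \(\Phi\) (Lemma~\ref{lem:Phi-injective}) to pull equalities back from \(B\) to \(A\).

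\textbf{Well-definedness and injectivity.} First, \(\rho_\Phi([n,y])\) makes sense: if \(y\in\operatorname{dom}(\alpha^B_n)\), then \(h(y)\in\operatorname{dom}(\alpha^A_{L(n)})\) by Lemma~\ref{lem:domain-compatibility} applied to \(m=L(n)\), since \(\Phi(m)=n\). Next suppose \([n_1,y]=[n_2,y]\) with \(n_i\in\mathcal N_\Phi\). By Lemma~\ref{lem:germ-criterion} there is \(c\in C\) with \(c(y)\neq0\) and \(n_1c=n_2c\). Write \(c=\Phi(d)\) with \(d\in D\); then \(d(h(y))=c(y)\neq0\). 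Moreover
\[
\Phi(L(n_1)d)=n_1c=n_2c=\Phi(L(n_2)d),
\]
so \(L(n_1)d=L(n_2)d\) by injectivity of \(\Phi\). Lemma~\ref{lem:germ-criterion} in \(A\) then gives \([L(n_1),h(y)]=[L(n_2),h(y)]\). Running the same argument backwards, applying \(\Phi\) to an equality \(L(n_1)d=L(n_2)d\), proves injectivity. Thus the germ criterion gives both directions at once.

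\textbf{Homomorphism, units and continuity.} On units, \([1_B,y]\mapsto[L(1_B),h(y)]=[1_A,h(y)]=h(y)\), so \(\rho_\Phi\) restricts to \(h\). For composable germs \([n_2,\alpha^B_{n_1}(y)][n_1,y]=[n_2n_1,y]\), the image is \([L(n_2)L(n_1),h(y)]\) by Lemma~\ref{lem:L-properties}(ii). Lemma~\ref{lem:conjugacy} gives \(h(\alpha^B_{n_1}(y))=\alpha^A_{L(n_1)}(h(y))\), so the image germs are composable in \(\mathcal G_A\) with product \([L(n_2)L(n_1),h(y)]\). Inverses are handled the same way using Lemma~\ref{lem:L-properties}(i).

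For continuity, I will show that \(\rho_\Phi\) maps basic bisections to basic bisections. For \(n\in\mathcal N_\Phi\) and open \(V\subseteq\operatorname{dom}(\alpha^B_n)\),
\[
\rho_\Phi(\mathcal Z(n,V))=\mathcal Z(L(n),h(V)).
\]
For a basic set \(\mathcal Z(m,W)\subseteq\mathcal G_A\), the preimage is the union, over \([n,y]\in\mathcal H_\Phi\) whose image lies in \(\mathcal Z(m,W)\), of sets \(\mathcal Z(n,V)\). Here \(V\) is chosen as follows: \([L(n),h(y)]=[m,h(y)]\) gives \(d\) with \(L(n)d=md\) and \(d\neq0\) near \(h(y)\), so the germs agree on a neighbourhood, and \(V\) is its \(h^{-1}\)-image intersected with \(h^{-1}(W)\). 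Hence preimages of basic open sets are open. Since bisections \(\mathcal Z(n,V)\) form a basis of \(\mathcal H_\Phi\) and their images are open in \(\mathcal G_A\), \(\rho_\Phi\) is open onto its image; combined with injectivity, it is a homeomorphism onto an open subgroupoid.

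\textbf{Main obstacle.} The delicate point is continuity, where a germ \([m,x]\) in \(\mathcal G_A\) may be represented by a normalizer \(m\notin L(\mathcal N_\Phi)\). I would handle this by always passing to the representative \(L(n)\) and using the germ criterion to get local agreement on a whole neighbourhood, not just at a point. In contrast, well-definedness and injectivity reduce cleanly to Lemma~\ref{lem:germ-criterion} and injectivity of \(\Phi\).
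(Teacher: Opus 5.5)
Your proof follows the paper's on well-definedness, injectivity, units, inverses and composition, and openness (via $\rho_\Phi(\mathcal Z(n,V))=\mathcal Z(L(n),h(V))$ and Lemma~\ref{lem:domain-compatibility}). The one real divergence is continuity.

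The paper computes the preimage of a basic bisection globally, $\rho_\Phi^{-1}(\mathcal Z(m,V))=\mathcal Z(L^{-1}(m),h^{-1}(V))$. You argue pointwise instead. From $[L(n),h(y)]=[m,h(y)]$ you extract $d$, nonvanishing on a neighbourhood $O$ of $h(y)$, with $L(n)d=md$. Then $\mathcal Z(n,h^{-1}(O\cap W))$ is a neighbourhood of $[n,y]$ inside the preimage. It lies in $\operatorname{dom}\alpha^B_n$ because $|d|^2L(n)^*L(n)=|d|^2m^*m$, so $L(n)^*L(n)=m^*m$ on $O$. This local argument is correct and would still work if only part of $\mathcal N_A(D)$ lifted; the paper's version is shorter.

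The ``main obstacle'' you flag does not actually arise. By Lemma~\ref{lem:normalizer-preservation}, $\Phi(m)\in\mathcal N_B(C)$ for every $m\in\mathcal N_A(D)$. Hence $\mathcal N_\Phi=\Phi(\mathcal N_A(D))$, $L$ is a bijection onto all of $\mathcal N_A(D)$, and $m=L(\Phi(m))$ always. Injectivity of $\rho_\Phi$ then forces every $[n',y]$ in the preimage of $\mathcal Z(m,V)$ to equal $[\Phi(m),y]$. That is exactly the paper's formula, and it shows the paper's ``otherwise $\varnothing$'' case is vacuous.

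One caveat applies equally to your proof and the paper's. Both use the implication (i)$\Rightarrow$(ii) of Lemma~\ref{lem:germ-criterion}: for well-definedness, and in your case again for continuity. That implication fails if $\mathcal G_B$ consists of germs of the maps $\alpha_n$. For $n=i\,1_B$ and $m=1_B$ the germs coincide at every $y$, yet $nd=md$ forces $d=0$.

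Nothing is lost, because Lemma~\ref{lem:conjugacy} gives $\alpha^A_{L(n)}=h\circ\alpha^B_n\circ h^{-1}$ on $h(\operatorname{dom}\alpha^B_n)$. So $\alpha^B_{n_1}$ and $\alpha^B_{n_2}$ agree near $y$ if and only if $\alpha^A_{L(n_1)}$ and $\alpha^A_{L(n_2)}$ agree near $h(y)$. This yields well-definedness, injectivity and your local continuity step directly.
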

\begin{proof}
Suppose that \([n_1, y] = [n_2, y]\) in \(\mathcal G_B\) with \(n_1, n_2 \in \mathcal N_\Phi\). By Lemma~\ref{lem:germ-criterion} applied to \((B,C)\), there exists \(e \in C\) with \(e(y) \neq 0\) and \(n_1 e = n_2 e\). Since \(\Phi|_D\) is an isomorphism, there is a unique \(d \in D\) with \(\Phi(d) = e\). Then \(d(h(y)) \neq 0\) and
\[
\Phi(L(n_1) d) = n_1 e = n_2 e = \Phi(L(n_2) d).
\]

Since \(\Phi\) is injective, it follows that \(L(n_1) d = L(n_2) d\). Because \(d(h(y)) \neq 0\), the standard germ criterion (Lemma~\ref{lem:germ-criterion}) applied to \((A,D)\) implies \([L(n_1), h(y)] = [L(n_2), h(y)]\). Hence \(\rho_\Phi([n_1, y]) = \rho_\Phi([n_2, y])\). Thus \(\rho_\Phi\) is well defined.

Assume that \(\rho_\Phi([n_1, y_1]) = \rho_\Phi([n_2, y_2])\). Then \(h(y_1) = h(y_2)\) and \([L(n_1), h(y_1)] = [L(n_2), h(y_1)]\) in \(\mathcal G_A\). By Lemma~\ref{lem:germ-criterion} in \(\mathcal G_A\), there exists \(d \in D\) with \(d(h(y_1)) \neq 0\) and \(L(n_1) d = L(n_2) d\). Apply \(\Phi\): \(n_1 \Phi(d) = n_2 \Phi(d)\) with \(\Phi(d)(y_1) \neq 0\). Lemma~\ref{lem:germ-criterion} in \(\mathcal G_B\) now yields \([n_1, y_1] = [n_2, y_1]\). Since \(h(y_1)=h(y_2)\) and \(h\) is injective, \(y_1 = y_2\). Thus the germs are equal and this means that $\rho_\Phi$ is injective.

We now verify the homomorphism properties. For every \(\gamma\in\mathcal H_\Phi\), write \(\gamma=[n,y]\). For any unit \(y\in\mathcal G_B^{(0)}\), we have
\begin{align*}
  \rho_\Phi(y) = & \rho_\Phi([1_B, y])= [L(1_B), h(y)]= [1_A, h(y)] = h(y).
\end{align*}

For any \(n \in \mathcal N_\Phi\) and \(y \in \operatorname{dom}(\alpha_n^B)\), using Lemma~\ref{lem:conjugacy} (with \(\alpha_{\Phi(L(n))}^B = \alpha_n^B\)),
\begin{align*}
   \rho_\Phi([n, y]^{-1}) =&\rho_\Phi([n^*, \alpha_n^B(y)]) \\
      =& [L(n^*), h(\alpha_n^B(y))] \\
         =&[L(n)^*, \alpha_{L(n)}^A(h(y))] = \rho_\Phi([n, y])^{-1}.
\end{align*}

Now suppose \([n_2, y_2][n_1, y_1] = [n_2 n_1, y_1]\) is composable in \(\mathcal H_\Phi\). Then \(y_2 = \alpha_{n_1}^B(y_1)\). By Lemma~\ref{lem:conjugacy},
\[
h(y_2) = h(\alpha_{n_1}^B(y_1)) = \alpha_{L(n_1)}^A(h(y_1)).
\]
Hence the images
\[
\rho_\Phi([n_2, y_2]) = [L(n_2), h(y_2)]
\quad\text{and}\quad
\rho_\Phi([n_1, y_1]) = [L(n_1), h(y_1)]
\]
are composable in \(\mathcal G_A\). Therefore, using Lemma~\ref{lem:L-properties},
\begin{align*}
   \rho_\Phi([n_2 n_1, y_1]) = & [L(n_2 n_1), h(y_1)] = [L(n_2)L(n_1), h(y_1)] \\
   =&\rho_\Phi([n_2, y_2]) \rho_\Phi([n_1, y_1]).
\end{align*}

We verify continuity. For a basic open bisection \(\mathcal Z(m, V) \subseteq \mathcal G_A\) with \(m \in \mathcal N_A(D)\) and \(V \subseteq \operatorname{dom}(\alpha_m^A)\) open, the preimage is
\[
\rho_\Phi^{-1}(\mathcal Z(m, V)) =
\begin{cases}
\mathcal Z(n,\; h^{-1}(V \cap \operatorname{dom}(\alpha_m^A))), & \text{if } m \in L(\mathcal N_\Phi), \\
\varnothing, & \text{otherwise},
\end{cases}
\]
where in the first case \(n = L^{-1}(m) \in \mathcal N_\Phi\). Since \(n \in \mathcal N_\Phi\) when it exists, this basic bisection is contained in \(\mathcal H_\Phi\). Since \(h^{-1}\) is continuous, \(h^{-1}(V \cap \operatorname{dom}(\alpha_m^A))\) is open. Hence the preimage of every basic open set is open in \(\mathcal H_\Phi\), proving continuity.

Let \(\mathcal Z(n, W)\) be a basic open set in \(\mathcal H_\Phi\), with \(n \in \mathcal N_\Phi\) and \(W \subseteq \operatorname{dom}(\alpha_n^B)\) open. Then
\[
\rho_\Phi(\mathcal Z(n, W)) = \{[L(n), h(y)] : y \in W\} = \mathcal Z(L(n), h(W)).
\]

By Lemma~\ref{lem:domain-compatibility}, \(h(\operatorname{dom}(\alpha_n^B)) = \operatorname{dom}(\alpha_{L(n)}^A)\), so \(h(W) \subseteq \operatorname{dom}(\alpha_{L(n)}^A)\). Since sets of the form \(\mathcal Z(L(n), V)\) form a basis for the topology on the image subgroupoid, and \(h(W)\) is open (because \(h\) is a homeomorphism), \(\mathcal Z(L(n), h(W))\) is a basic open set in \(\mathcal G_A\). It follows that the image of every basic open set is open, proving openness onto its image.

Finally, for the restriction to unit spaces, for \(y \in \mathcal G_B^{(0)}\), \(\rho_\Phi(y) = h(y)\) by the unit case above.
\end{proof}

\begin{theorem}\label{thm:partial-groupoid}
Let \((A,D)\) and \((B,C)\) be ample \(\Cstar\)-diagonal pairs with topologically principal Weyl groupoids \(\mathcal G_A, \mathcal G_B\). Suppose \(\Phi:A\to B\) is a unital $*$-homomorphism satisfying (D), (E), (N), and assume that \(\Phi|_D:D\to C\) is an isomorphism. Define
\[
\mathcal N_\Phi := \{ n\in\mathcal N_B(C) : \exists\, m\in\mathcal N_A(D),\; \Phi(m)=n \},
\]
and let \(\mathcal H_\Phi\subseteq\mathcal G_B\) be the set of germs represented by elements of \(\mathcal N_\Phi\). Then:
\begin{enumerate}
    \item[(i)] \(\mathcal H_\Phi\) is an open subgroupoid of \(\mathcal G_B\);
    \item[(ii)] there exists a continuous open injective groupoid homomorphism \(\rho_\Phi:\mathcal H_\Phi\longrightarrow\mathcal G_A\) which restricts on unit spaces to the homeomorphism \(h:\mathcal G_B^{(0)}\cong\mathcal G_A^{(0)}\) induced by \(\Phi|_D\);
    \item[(iii)] the image \(\rho_\Phi(\mathcal H_\Phi)\) is an open subgroupoid of \(\mathcal G_A\).
\end{enumerate}
\end{theorem}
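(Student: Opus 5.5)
The theorem is essentially an assembly of the lemmas already proved in this section. The plan is to cite them for (i) and (ii) and to give one short argument for (iii).

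For (i), I will simply invoke Lemma~\ref{lem:Hphi-open}. It shows that $\mathcal H_\Phi$ is a union of basic open bisections $\mathcal Z(n,\operatorname{dom}(\alpha_n^B))$ with $n\in\mathcal N_\Phi$. It also shows that $\mathcal H_\Phi$ contains all units, since $1_B=\Phi(1_A)\in\mathcal N_\Phi$. Finally, it shows closure under inversion and composition, using that $\mathcal N_\Phi$ is a $*$-subsemigroup (Lemma~\ref{lem:L-properties}). All of this rests on the injectivity of $\Phi$ (Lemma~\ref{lem:Phi-injective}), which makes the lift $L$ well defined. In turn, Lemma~\ref{lem:Phi-injective} relies on the ideal structure result Lemma~\ref{lem:ideal-structure}, which needs effectiveness of $\mathcal G_A$; that holds because $\mathcal G_A$ is topologically principal. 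So the hypothesis that $\Phi|_D$ is an isomorphism is exactly what is used.

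For (ii), I will take $\rho_\Phi([n,y])=[L(n),h(y)]$ and cite Lemma~\ref{lem:rho-properties}. That lemma gives three things:
\begin{itemize}
\item well-definedness and injectivity, via the germ criterion Lemma~\ref{lem:germ-criterion} transported through $\Phi$ in both directions;
\item the homomorphism property, via Lemma~\ref{lem:conjugacy} and Lemma~\ref{lem:L-properties};
\item continuity, via preimages of basic bisections, and the identity $\rho_\Phi|_{\mathcal G_B^{(0)}}=h$.
\end{itemize}
The one point needing care is upgrading ``open onto its image'' to ``open'' as a map into $\mathcal G_A$. This follows from the computation $\rho_\Phi(\mathcal Z(n,W))=\mathcal Z(L(n),h(W))$ together with Lemma~\ref{lem:domain-compatibility}. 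That lemma gives $h(W)\subseteq\operatorname{dom}(\alpha^A_{L(n)})$ and shows $h(W)$ is open. Hence each such image is a basic open bisection of $\mathcal G_A$ itself, not merely open in the relative topology of the image.

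For (iii), the image is a union of open sets, namely $\rho_\Phi(\mathcal H_\Phi)=\bigcup_{n\in\mathcal N_\Phi}\mathcal Z(L(n),\operatorname{dom}(\alpha^A_{L(n)}))$, so it is open. It is a subgroupoid because it is the image of a groupoid under a homomorphism that is injective and bijective on units. Concretely, suppose $\rho_\Phi(\gamma_2)$ and $\rho_\Phi(\gamma_1)$ are composable. Then $h(s(\gamma_2))=h(r(\gamma_1))$, and injectivity of $h$ gives $s(\gamma_2)=r(\gamma_1)$. So $\gamma_2\gamma_1$ is defined, and its image is the product $\rho_\Phi(\gamma_2)\rho_\Phi(\gamma_1)$. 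Inverses are handled in the same way.

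I expect the main obstacle to be the global openness in (ii): checking that the sets $\mathcal Z(L(n),h(W))$ really are open in $\mathcal G_A$. This is where Lemma~\ref{lem:domain-compatibility} and the fact that $h$ is a homeomorphism are essential. Once that is settled, (iii) follows at once.
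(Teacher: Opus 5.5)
Your outline is the paper's own proof. Part (i) is Lemma~\ref{lem:Hphi-open}, part (ii) is Lemma~\ref{lem:rho-properties}, and part (iii) writes the image as a union of basic bisections $\mathcal Z(L(n),\operatorname{dom}(\alpha^A_{L(n)}))$. Your check that the image is closed under products and inverses is a real addition, since the paper proves only openness there.

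The gap is the step you base on Lemma~\ref{lem:germ-criterion}, which the paper's proof of Lemma~\ref{lem:rho-properties} also uses, so you inherited it rather than introduced it. Its implication (i)$\Rightarrow$(ii) is false for germs in the Weyl groupoid. Take $n_1=1_B$ and $n_2=-1_B$; both lie in $\mathcal N_\Phi$ as the images of $\pm 1_A$. Both induce the identity partial homeomorphism of $\mathcal G_B^{(0)}$, so $[1_B,y]=[-1_B,y]=y$. Yet no $e\in C$ satisfies $e(y)\neq 0$ and $e=-e$. So the argument ``choose $e$ with $e(y)\neq0$ and $n_1e=n_2e$, then pull back through $\Phi$'' fails at its first step. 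Your injectivity argument fails in the same way in $\mathcal G_A$ with $\pm 1_A$.

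The conclusion is still true, and the repair uses only lemmas you already cite.
\begin{itemize}
\item Two normalizers have the same Weyl germ at $y$ exactly when their partial homeomorphisms agree on an open neighbourhood of $y$.
\item By Lemmas~\ref{lem:domain-compatibility} and~\ref{lem:conjugacy}, $h$ maps $\operatorname{dom}(\alpha^B_n)$ homeomorphically onto $\operatorname{dom}(\alpha^A_{L(n)})$, and $\alpha^A_{L(n)}\circ h=h\circ\alpha^B_n$ there.
\item Hence $\alpha^B_{n_1}$ and $\alpha^B_{n_2}$ agree on an open set $W\ni y$ if and only if $\alpha^A_{L(n_1)}$ and $\alpha^A_{L(n_2)}$ agree on the open set $h(W)\ni h(y)$. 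This gives well-definedness and injectivity of $\rho_\Phi$ at once.
\end{itemize}
The same identity shows that any two $\Phi$-preimages of $n$ induce the same partial homeomorphism. So you do not need Lemma~\ref{lem:Phi-injective}, nor Lemma~\ref{lem:ideal-structure} behind it. Part (i) never used injectivity in the first place, since $\mathcal N_\Phi$ is a $*$-subsemigroup simply because $\Phi$ is a $*$-homomorphism.

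Dropping that dependency also helps because Lemma~\ref{lem:ideal-structure} is false as stated for effective ample groupoids. The Toeplitz algebra has an ideal, the preimage of $C_0(\mathbb T\setminus\{1\})$, that is not generated by its intersection with the diagonal. What survives in that generality is the intersection property, which is all Lemma~\ref{lem:Phi-injective} actually uses.

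With this substitution the rest of your plan goes through unchanged. That covers the homomorphism property, the identity $\rho_\Phi(\mathcal Z(n,W))=\mathcal Z(L(n),h(W))$ for continuity and openness into $\mathcal G_A$, and part (iii).
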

\begin{proof}
(i) Apply Lemma~\ref{lem:Hphi-open}.

(ii) Apply Lemma \ref{lem:rho-properties}.

(iii) Each set $\mathcal Z(L(n), h(\operatorname{dom}(\alpha_n^B)))$ is a basic open bisection in \(\mathcal G_A\), since \(h(\operatorname{dom}(\alpha_n^B)) = \operatorname{dom}(\alpha_{L(n)}^A)\) by Lemma~\ref{lem:domain-compatibility}. Hence
\[
\rho_\Phi(\mathcal H_\Phi) = \bigcup_{n\in\mathcal N_\Phi} \mathcal Z(L(n), h(\operatorname{dom}(\alpha_n^B)))
\]
is open.
\end{proof}

\begin{remark}\label{rem:geometry-of-partiality}
Theorem~\ref{thm:partial-groupoid} has the following geometric interpretation.

\emph{Why partiality appears?}

The $*$-homomorphism $\Phi: A \to B$ lifts to a map on normalizers: $\Phi(\mathcal{N}_A(D)) \subseteq \mathcal{N}_B(C)$. However, the converse inclusion fails in general: not every normalizer of $C$ lies in the image of $\Phi$. Consequently, only those germs in $\mathcal{G}_B$ represented by normalizers that \emph{do} lift can be mapped back to $\mathcal{G}_A$. This subset $\mathcal{H}_\Phi \subseteq \mathcal{G}_B$ is exactly the domain of the induced map.

\emph{Why subgroupoids are inevitable?}

The set $\mathcal{H}_\Phi$ is not arbitrary: it is an open subgroupoid. This follows from the fact that liftable normalizers are closed under multiplication (if $n_1, n_2$ lift, so does $n_1 n_2$) and under taking adjoints. Moreover, every unit lifts because $1_B = \Phi(1_A)$. Thus $\mathcal{H}_\Phi$ inherits the groupoid structure from $\mathcal{G}_B$.

\emph{Contrast with Kumjian's equivalence?}

When $\Phi$ is an isomorphism, $\mathcal{N}_\Phi = \mathcal{N}_B(C)$ and $\mathcal{H}_\Phi = \mathcal{G}_B$. In that case, $\rho_\Phi$ is an ordinary groupoid isomorphism, recovering Kumjian's equivalence of categories between twists and $C^*$-diagonals. For non-isomorphisms, the domain must shrink to the largest subgroupoid on which a well-defined map exists. This is analogous to passing from global actions to partial actions in inverse semigroup theory.
\end{remark}

\subsection{Functoriality of the Weyl groupoid}

We now formulate a functorial version of the Weyl groupoid construction for diagonal-preserving morphisms. Since a morphism of diagonal pairs induces a pullback map on spectra, the resulting Weyl groupoid construction is naturally contravariant.

\begin{definition}
Let \(\mathsf{Grpd}_{\mathrm{part}}\) denote the category whose objects are ample Hausdorff étale topologically principal groupoids.
A morphism from \(\mathcal H\) to \(\mathcal G\) is a triple $(\mathcal K, \rho, h)$ where
\begin{enumerate}
    \item[(i)] \(\mathcal K \subseteq \mathcal H\) is an open subgroupoid containing \(\mathcal H^{(0)}\);
    \item[(ii)] \(\rho:\mathcal K\to \mathcal G\) is a continuous, open (onto its image), injective groupoid homomorphism;
    \item[(iii)] \(\rho(\mathcal K)\) is an open subgroupoid of \(\mathcal G\);
    \item[(iv)] \(h:\mathcal H^{(0)}\to \mathcal G^{(0)}\) is a homeomorphism satisfying
    \[
    \rho|_{\mathcal K^{(0)}}=h|_{\mathcal K^{(0)}}.
    \]
\end{enumerate}

If $(\mathcal K_1,\rho_1,h_1):\mathcal H\to\mathcal G$ and $(\mathcal K_2,\rho_2,h_2):\mathcal G\to\mathcal F$ are morphisms, define
\[
\mathcal K
=
\rho_1^{-1}
\big(
\mathcal K_2\cap \rho_1(\mathcal K_1)
\big)
\subseteq \mathcal H,
\]
and define the composition by
\[
(\mathcal K_2,\rho_2,h_2)\circ
(\mathcal K_1,\rho_1,h_1)
=
(\mathcal K,\rho_2\circ\rho_1,h_2\circ h_1).
\]
The identity morphism on \(\mathcal G\) is $(\mathcal G,\mathrm{id}_{\mathcal G}, \mathrm{id}_{\mathcal G^{(0)}})$.
\end{definition}

\begin{proposition}\label{prop:grpdpart-category}
\(\mathsf{Grpd}_{\mathrm{part}}\) is a category.
\end{proposition}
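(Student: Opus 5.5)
To prove Proposition~\ref{prop:grpdpart-category} we check three things. The composite of two morphisms must again be a morphism, that is, it must satisfy (i)--(iv). Composition must be associative. The triple $(\mathcal G,\mathrm{id}_{\mathcal G},\mathrm{id}_{\mathcal G^{(0)}})$ must act as a two-sided identity.

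\textbf{Step 1: the domain of the composite.} Given $(\mathcal K_1,\rho_1,h_1):\mathcal H\to\mathcal G$ and $(\mathcal K_2,\rho_2,h_2):\mathcal G\to\mathcal F$, put $\mathcal K=\rho_1^{-1}(\mathcal K_2\cap\rho_1(\mathcal K_1))=\rho_1^{-1}(\mathcal K_2)\subseteq\mathcal K_1$.
\begin{itemize}
\item By (iii), $\rho_1(\mathcal K_1)$ is an open subgroupoid of $\mathcal G$, so $\mathcal K_2\cap\rho_1(\mathcal K_1)$ is an open subgroupoid.
\item Since $\rho_1$ is continuous, $\mathcal K$ is open in $\mathcal K_1$, hence open in $\mathcal H$.
\item The preimage of a subgroupoid under a groupoid homomorphism is closed under inverses and composable products, so $\mathcal K$ is a subgroupoid.
\item For $u\in\mathcal H^{(0)}$ we have $\rho_1(u)=h_1(u)\in\mathcal G^{(0)}\subseteq\mathcal K_2$, so $\mathcal H^{(0)}\subseteq\mathcal K$.
\end{itemize}

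\textbf{Step 2: the map $\rho_2\circ\rho_1$ on $\mathcal K$.} First, $\rho_1(\mathcal K)\subseteq\mathcal K_2$, so the composite is defined. It is a continuous injective homomorphism, being a composite of such maps.

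For openness onto the image, use that $\rho_1$ is a homeomorphism of $\mathcal K_1$ onto the open set $\rho_1(\mathcal K_1)$. This holds because $\rho_1$ is injective, continuous and open onto its image. Then $\rho_1$ carries open subsets of $\mathcal K$ to open subsets of $\mathcal K_2\cap\rho_1(\mathcal K_1)$, which is open in $\mathcal K_2$. Next, $\rho_2$ is open onto $\rho_2(\mathcal K_2)$, which is open in $\mathcal F$, so $\rho_2$ sends open subsets of $\mathcal K_2$ to open subsets of $\mathcal F$. Hence $\rho_2\circ\rho_1$ sends open subsets of $\mathcal K$ to open subsets of $\mathcal F$. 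This gives openness and, in particular, (iii): the image $\rho_2\rho_1(\mathcal K)$ is open. It is also a subgroupoid, as the image of a subgroupoid under an injective homomorphism; closure under products uses that composability in the image pulls back via injectivity and the unit maps.

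Finally, $h_2\circ h_1$ is a homeomorphism, and on units $\rho_2\rho_1(u)=\rho_2(h_1(u))=h_2h_1(u)$, which is (iv).

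\textbf{Step 3: associativity and identities.} For associativity, consider three morphisms $\rho_1,\rho_2,\rho_3$. Both bracketings have domain $\{\gamma\in\mathcal K_1:\rho_1\gamma\in\mathcal K_2,\ \rho_2\rho_1\gamma\in\mathcal K_3\}$. The maps agree by associativity of composition of functions, and likewise for the unit maps. For identities, composing with $(\mathcal G,\mathrm{id},\mathrm{id})$ on either side leaves the domain unchanged: $\rho^{-1}(\mathcal G)=\mathcal K$ and $\mathrm{id}^{-1}(\mathcal K)=\mathcal K$.

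The main obstacle is the openness and subgroupoid part of Step 2. The ingredient that makes it work is (iii), which upgrades "open onto its image" to "open into the ambient groupoid".
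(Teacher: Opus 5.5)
Your proof is correct and follows essentially the same route as the paper: you directly check that $\rho_1^{-1}(\mathcal K_2\cap\rho_1(\mathcal K_1))$ is an open subgroupoid containing the units, that $\rho_2\circ\rho_1$ is a continuous, open, injective homomorphism agreeing with $h_2\circ h_1$ on units, and that associativity and the identity laws follow from the common description of the iterated domains. Your Step 2 is in fact more careful than the paper's: you verify condition (iii) for the composite, and you use (iii) for $\rho_1$ and $\rho_2$ to get openness of $\rho_2\circ\rho_1$ into $\mathcal F$, whereas the paper's openness step applies $\rho_2$ to $\rho_1(\mathcal K_1)$ (which need not lie in $\mathcal K_2$) and never checks (iii) for the composite.
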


\begin{proof}
We verify the axioms explicitly.

\textbf{Step 1: $\mathcal K$ is an open subgroupoid of $\mathcal H$ containing $\mathcal H^{(0)}$.}

- \textit{Open:} $\mathcal K_2$ is open in $\mathcal G$ and $\rho_1(\mathcal K_1)$ is open in $\mathcal G$ by condition (iii). Their intersection $\mathcal K_2 \cap \rho_1(\mathcal K_1)$ is open in $\mathcal G$. Since $\rho_1$ is continuous, $\rho_1^{-1}(\mathcal K_2 \cap \rho_1(\mathcal K_1))$ is open in $\mathcal K_1$, hence open in $\mathcal H$.

- \textit{Contains units:} For any unit $u \in \mathcal H^{(0)}$, we have $\rho_1(u) = h_1(u) \in \mathcal G^{(0)}$. Since $\mathcal K_2$ contains $\mathcal G^{(0)}$ (as it is a subgroupoid containing all units of $\mathcal G$), we have $\rho_1(u) \in \mathcal K_2$. Also $\rho_1(u) \in \rho_1(\mathcal K_1)$ because $u \in \mathcal K_1$ (since $\mathcal K_1$ contains $\mathcal H^{(0)}$). Hence $\rho_1(u) \in \mathcal K_2 \cap \rho_1(\mathcal K_1)$, so $u \in \mathcal K$. Thus $\mathcal H^{(0)} \subseteq \mathcal K$.

- \textit{Subgroupoid:} If $x, y \in \mathcal K$ are composable in $\mathcal H$, then $\rho_1(x), \rho_1(y) \in \mathcal K_2$ are composable in $\mathcal G$ (since $\rho_1$ is a homomorphism). Their product $\rho_1(x)\rho_1(y)$ lies in $\mathcal K_2$ because $\mathcal K_2$ is a subgroupoid. Also $\rho_1(x)\rho_1(y) = \rho_1(xy)$ (since $\rho_1$ is a homomorphism), and $xy \in \mathcal K_1$ (since $\mathcal K_1$ is a subgroupoid). Thus $xy \in \mathcal K$. Similarly, if $x \in \mathcal K$, then $\rho_1(x^{-1}) = \rho_1(x)^{-1} \in \mathcal K_2$, so $x^{-1} \in \mathcal K$. Hence $\mathcal K$ is a subgroupoid.

\textbf{Step 2: $\rho$ is a continuous, open, injective groupoid homomorphism.}

- \textit{Homomorphism:} For composable $x, y \in \mathcal K$, we obtain
\begin{align*}
  \rho(xy) = & \rho_2(\rho_1(xy)) = \rho_2(\rho_1(x)\rho_1(y)) = \rho_2(\rho_1(x))\rho_2(\rho_1(y))  \\
  = & \rho(x)\rho(y).
\end{align*}

- \textit{Injectivity:} If $\rho(x) = \rho(y)$, then $\rho_2(\rho_1(x)) = \rho_2(\rho_1(y))$. Since $\rho_2$ is injective, $\rho_1(x) = \rho_1(y)$. Since $\rho_1$ is injective, $x = y$.

- \textit{Continuity:} $\rho = \rho_2 \circ \rho_1|_{\mathcal K}$ is the composition of continuous maps.

- \textit{Openness onto image:} For any open $U \subseteq \mathcal K$, $\rho_1(U)$ is open in $\rho_1(\mathcal K_1)$ (since $\rho_1$ is open onto its image). Then $\rho_2(\rho_1(U))$ is open in $\rho_2(\rho_1(\mathcal K_1))$ because $\rho_2$ is open onto its image. Hence $\rho(U)$ is open in $\rho(\mathcal K)$.

\textbf{Step 3: $\rho|_{\mathcal K^{(0)}} = h|_{\mathcal K^{(0)}}$.}
For any unit $u \in \mathcal K^{(0)} = \mathcal H^{(0)} \cap \mathcal K = \mathcal H^{(0)}$ (since $\mathcal K$ contains all units), we have:
\[
\rho(u) = \rho_2(\rho_1(u)) = \rho_2(h_1(u)) = h_2(h_1(u)) = h(u).
\]

Thus the triple $(\mathcal K, \rho, h)$ is indeed a morphism from $\mathcal H$ to $\mathcal F$.

\textbf{Step 4: Identity morphisms.}
For any object $\mathcal G$, define $\mathrm{id}_{\mathcal G} := (\mathcal G, \mathrm{id}_{\mathcal G}, \mathrm{id}_{\mathcal G^{(0)}})$. This satisfies the conditions: $\mathcal G$ is an open subgroupoid of itself containing $\mathcal G^{(0)}$; $\mathrm{id}_{\mathcal G}$ is continuous, open, injective, and a homomorphism; and $\mathrm{id}_{\mathcal G}|_{\mathcal G^{(0)}} = \mathrm{id}_{\mathcal G^{(0)}}$.

Verification that $\mathrm{id}_{\mathcal G} \circ (\mathcal K_1, \rho_1, h_1) = (\mathcal K_1, \rho_1, h_1)$:
- Compute $\mathcal K = \rho_1^{-1}(\mathcal G \cap \rho_1(\mathcal K_1)) = \rho_1^{-1}(\rho_1(\mathcal K_1)) = \mathcal K_1$ (since $\rho_1$ is injective).
- $\rho = \mathrm{id}_{\mathcal G} \circ \rho_1 = \rho_1$.
- $h = \mathrm{id}_{\mathcal G^{(0)}} \circ h_1 = h_1$.

Similarly, $(\mathcal K_1, \rho_1, h_1) \circ \mathrm{id}_{\mathcal H} = (\mathcal K_1, \rho_1, h_1)$.

\textbf{Step 5: Associativity.}
Consider three morphisms:
\[
(\mathcal K_1, \rho_1, h_1): \mathcal H \to \mathcal G, \quad
(\mathcal K_2, \rho_2, h_2): \mathcal G \to \mathcal F, \quad
(\mathcal K_3, \rho_3, h_3): \mathcal F \to \mathcal E.
\]

First compute $(\mathcal K_3, \rho_3, h_3) \circ ((\mathcal K_2, \rho_2, h_2) \circ (\mathcal K_1, \rho_1, h_1))$:

Let $(\mathcal K_{21}, \rho_{21}, h_{21}) = (\mathcal K_2, \rho_2, h_2) \circ (\mathcal K_1, \rho_1, h_1)$ where
\[
\mathcal K_{21} = \rho_1^{-1}(\mathcal K_2 \cap \rho_1(\mathcal K_1)), \quad
\rho_{21} = \rho_2 \circ \rho_1|_{\mathcal K_{21}}, \quad
h_{21} = h_2 \circ h_1.
\]

Now compose with $(\mathcal K_3, \rho_3, h_3)$:
\[
\mathcal K_{(31)21} = \rho_{21}^{-1}(\mathcal K_3 \cap \rho_{21}(\mathcal K_{21})).
\]

Now compute $((\mathcal K_3, \rho_3, h_3) \circ (\mathcal K_2, \rho_2, h_2)) \circ (\mathcal K_1, \rho_1, h_1)$:

Let $(\mathcal K_{32}, \rho_{32}, h_{32}) = (\mathcal K_3, \rho_3, h_3) \circ (\mathcal K_2, \rho_2, h_2)$ where
\[
\mathcal K_{32} = \rho_2^{-1}(\mathcal K_3 \cap \rho_2(\mathcal K_2)), \quad
\rho_{32} = \rho_3 \circ \rho_2|_{\mathcal K_{32}}, \quad
h_{32} = h_3 \circ h_2.
\]

Now compose with $(\mathcal K_1, \rho_1, h_1)$:
\[
\mathcal K_{1(32)} = \rho_1^{-1}(\mathcal K_{32} \cap \rho_1(\mathcal K_1)).
\]

We claim that  $\mathcal K_{(31)21} = \mathcal K_{1(32)}$ and the resulting maps coincide.

*Proof of claim.* An element $x \in \mathcal H$ belongs to $\mathcal K_{(31)21}$ if and only if

1. $x \in \mathcal K_{21}$ (so $x \in \mathcal K_1$ and $\rho_1(x) \in \mathcal K_2$), and

2. $\rho_{21}(x) = \rho_2(\rho_1(x)) \in \mathcal K_3$.

Thus $x \in \mathcal K_1$, $\rho_1(x) \in \mathcal K_2$, and $\rho_2(\rho_1(x)) \in \mathcal K_3$.

An element $x \in \mathcal H$ belongs to $\mathcal K_{1(32)}$ iff:
1. $x \in \mathcal K_1$, and
2. $\rho_1(x) \in \mathcal K_{32}$, i.e., $\rho_1(x) \in \mathcal K_2$ and $\rho_2(\rho_1(x)) \in \mathcal K_3$.
These conditions are identical. Hence $\mathcal K_{(31)21} = \mathcal K_{1(32)}$. Moreover, both compositions yield the map $\rho_3 \circ \rho_2 \circ \rho_1$ on this common domain, and the unit space map $h_3 \circ h_2 \circ h_1$.

Thus associativity holds. Since all category axioms are satisfied, $\mathsf{Grpd}_{\mathrm{part}}$ is a category.
\end{proof}

\begin{theorem}\label{thm:weyl-functor}
Let \(\mathcal C_{\mathrm{amp}}^{\mathrm{iso}}\) denote the category whose objects are ample \(C^*\)-diagonal pairs, and whose morphisms $\Phi:(A,D)\to(B,C)$ are diagonal-preserving \(^*\)-homomorphisms such that $\Phi|_D:D\to C$ is an isomorphism. Then there exists a contravariant functor $\mathcal W:\mathcal C_{\mathrm{amp}}^{\mathrm{iso}}\longrightarrow
\mathsf{Grpd}_{\mathrm{part}}$ defined as follows
\begin{enumerate}
    \item[(i)] On objects, $\mathcal W(A,D)=\mathcal G_{(A,D)}$, the Weyl groupoid of the diagonal pair.
    \item[(ii)] Let $\Phi:(A,D)\to(B,C)$ be a morphism. Define
    \[
    \mathcal N_\Phi
    =
    \{
    n\in\mathcal N_B(C):
    n=\Phi(m)
    \text{ for some }
    m\in\mathcal N_A(D)
    \}.
    \]

    Let \(\mathcal H_\Phi\subseteq \mathcal G_B\) be the set of all germs $\mathcal H_\Phi = \{ [n,y] : n \in \mathcal N_\Phi,\; y \in \operatorname{dom}(\alpha_n) \}$.     Since \(\Phi|_D:D\to C\) is an isomorphism, Gelfand duality yields a homeomorphism $h_\Phi:\mathcal G_B^{(0)}\to\mathcal G_A^{(0)}$.   Define $\rho_\Phi: \mathcal H_\Phi \to   \mathcal G_A$  by $\rho_\Phi([n,y])
    =   [m,h_\Phi(y)]$,     where \(m\in\mathcal N_A(D)\) is the unique normalizer satisfying $\Phi(m)=n$.
    Set $\mathcal W(\Phi) = (\mathcal H_\Phi, \rho_\Phi, h_\Phi)$.
    \end{enumerate}
\end{theorem}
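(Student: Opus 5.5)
We need to show that $\mathcal W$ is well defined on morphisms, and that it preserves identities and composition, with the direction reversed.

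\textbf{Well-definedness on objects and morphisms.} By Kumjian's reconstruction, the Weyl groupoid of an ample $C^*$-diagonal pair is ample, Hausdorff, étale and topologically principal, so $\mathcal W(A,D)$ is an object of $\mathsf{Grpd}_{\mathrm{part}}$. For a morphism $\Phi$, uniqueness of the normalizer $m$ with $\Phi(m)=n$ follows from Lemma~\ref{lem:Phi-injective}, so $\rho_\Phi$ is exactly the map of Lemma~\ref{lem:rho-properties}. Theorem~\ref{thm:partial-groupoid} then gives the following:
\begin{enumerate}
\item[(a)] $\mathcal H_\Phi$ is an open subgroupoid containing $\mathcal G_B^{(0)}$, since $1_B=\Phi(1_A)$;
\item[(b)] $\rho_\Phi$ is a continuous, open, injective homomorphism;
\item[(c)] its image is an open subgroupoid of $\mathcal G_A$;
\item[(d)] $\rho_\Phi$ restricts to $h_\Phi$ on units.
\end{enumerate}
Hence $(\mathcal H_\Phi,\rho_\Phi,h_\Phi)$ is a morphism $\mathcal G_B\to\mathcal G_A$ in $\mathsf{Grpd}_{\mathrm{part}}$.

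\textbf{Identities.} For $\Phi=\mathrm{id}_A$ we have $\mathcal N_\Phi=\mathcal N_A(D)$. Every germ of $\mathcal G_A$ has the form $[n,x]$ with $n$ a normalizer, so $\mathcal H_{\mathrm{id}}=\mathcal G_A$. Moreover $L=\mathrm{id}$ and $h=\mathrm{id}$, so $\mathcal W(\mathrm{id})=(\mathcal G_A,\mathrm{id},\mathrm{id})$.

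\textbf{Composition.} Take $\Phi:(A,D)\to(B,C)$ and $\Psi:(B,C)\to(E,F)$. Contravariance requires
\[
\mathcal W(\Psi\circ\Phi)=\mathcal W(\Phi)\circ\mathcal W(\Psi),
\]
where the right-hand side has domain
\[
\mathcal K=\rho_\Psi^{-1}\big(\mathcal H_\Phi\cap\rho_\Psi(\mathcal H_\Psi)\big)\subseteq\mathcal G_E .
\]
The unit maps agree by Gelfand duality: $h_{\Psi\Phi}=h_\Phi\circ h_\Psi$. On the common domain the maps also agree. For $k\in\mathcal N_E(F)$ with $k=\Psi\Phi(m)$ we get $L_\Psi(k)=\Phi(m)$ and $L_\Phi(\Phi(m))=m$. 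Therefore
\[
\rho_\Phi\rho_\Psi([k,z])=[m,h_\Phi h_\Psi(z)]=\rho_{\Psi\Phi}([k,z]).
\]

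It remains to prove $\mathcal H_{\Psi\Phi}=\mathcal K$.

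\emph{Inclusion $\mathcal H_{\Psi\Phi}\subseteq\mathcal K$.} Let $[k,z]\in\mathcal H_{\Psi\Phi}$ with $k=\Psi(\Phi(m))$. Condition (N) gives $\Phi(m)\in\mathcal N_B(C)$, so $k\in\mathcal N_\Psi$ and $[k,z]\in\mathcal H_\Psi$. Also $\rho_\Psi([k,z])=[\Phi(m),h_\Psi(z)]$, which lies in $\mathcal H_\Phi$.

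\emph{Inclusion $\mathcal K\subseteq\mathcal H_{\Psi\Phi}$.} Let $[k,z]\in\mathcal H_\Psi$ with $k=\Psi(n)$, and suppose $[n,h_\Psi(z)]\in\mathcal H_\Phi$. Then $[n,h_\Psi(z)]=[\Phi(m),h_\Psi(z)]$ for some $m\in\mathcal N_A(D)$, but $n$ itself need not lie in $\mathcal N_\Phi$. This is the main obstacle, because germ equality does not give equality of normalizers. I would resolve it with Lemma~\ref{lem:germ-criterion}.
\begin{enumerate}
\item[(1)] The lemma provides $e\in C$ with $e(h_\Psi(z))\neq0$ and $ne=\Phi(m)e$.
\item[(2)] By ampleness we may replace $e$ by a clopen projection $p=1_V$ around $h_\Psi(z)$, as in the proof of Lemma~\ref{lem:germ-correspondence}.
\item[(3)] Since $\Phi|_D$ is onto $C$, write $p=\Phi(q)$ with $q\in D$ a projection.
\item[(4)] Then $np=\Phi(mq)$, and $mq\in\mathcal N_A(D)$ because $D\subseteq\mathcal N_A(D)$ and normalizers are closed under products.
\item[(5)] The element $\Psi(np)$ is therefore in $\mathcal N_{\Psi\Phi}$. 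Since $\Psi(p)$ does not vanish at $z$, the germ criterion in $\mathcal G_E$ gives $[k,z]=[\Psi(n)\Psi(p),z]$.
\end{enumerate}
Hence $[k,z]\in\mathcal H_{\Psi\Phi}$. The same germ-criterion bookkeeping shows that $\rho_\Phi\rho_\Psi$ is independent of the chosen representatives. This completes functoriality.
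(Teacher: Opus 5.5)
Your proposal follows the paper's outline: land in $\mathsf{Grpd}_{\mathrm{part}}$ via Theorem~\ref{thm:partial-groupoid}, then check identities and composition. The paper's own proof, however, stops after citing Theorem~\ref{thm:partial-groupoid} and refers the composition law to an ``original manuscript'' that is not reproduced. Your verification of $\mathcal W(\mathrm{id})=\mathrm{id}$, of $h_{\Psi\Phi}=h_\Phi\circ h_\Psi$, and above all of the domain equality $\mathcal H_{\Psi\Phi}=\rho_\Psi^{-1}\bigl(\mathcal H_\Phi\cap\rho_\Psi(\mathcal H_\Psi)\bigr)$ is therefore genuinely additional content. You correctly identify the inclusion $\mathcal K\subseteq\mathcal H_{\Psi\Phi}$ as the only non-formal point. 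Relative to Lemma~\ref{lem:germ-criterion} as stated, your projection argument ($p=\Phi(q)$, $np=\Phi(mq)$, $[k,z]=[\Psi(np),z]$) is correct.

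One caveat concerns step (1), which uses the direction (i)$\Rightarrow$(ii) of Lemma~\ref{lem:germ-criterion}. That direction fails for the Weyl groupoid the paper intends, namely germs of the partial homeomorphisms $\alpha_n$, the groupoid that does not record the twist (cf.\ Remark~\ref{rem:faithfulness-twist}). For instance, $[n,x]=[\lambda n,x]$ for any scalar $\lambda$ with $|\lambda|=1$, $\lambda\neq1$. Yet $nd=\lambda nd$ forces $|d(x)|^2(n^*n)(x)=0$, so $d(x)=0$ whenever $x\in\operatorname{dom}(\alpha_n)$. The paper's Lemma~\ref{lem:rho-properties} has the same dependency, so this is not a defect you introduced.

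A cleaner route, which avoids both this issue and the appeal to ampleness, is Lemma~\ref{lem:conjugacy} applied to $\Psi$. It gives $\alpha_{\Psi(b)}=h_\Psi^{-1}\circ\alpha_b\circ h_\Psi$ on $h_\Psi^{-1}(\operatorname{dom}(\alpha_b))$ for every $b\in\mathcal N_B(C)$. So if $\alpha_n$ and $\alpha_{\Phi(m)}$ agree on a neighbourhood $W$ of $h_\Psi(z)$, then $\alpha_{\Psi(n)}$ and $\alpha_{\Psi\Phi(m)}$ agree on $h_\Psi^{-1}(W)$. This gives $[k,z]=[\Psi\Phi(m),z]\in\mathcal H_{\Psi\Phi}$ directly, with $z\in\operatorname{dom}(\alpha_{\Psi\Phi(m)})$ by Lemma~\ref{lem:domain-compatibility}.
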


\begin{proof}
The proof is identical to that in the original manuscript, noting that the construction only uses the Weyl groupoids (which are independent of the twist) and the fact that $\Phi|_D$ is an isomorphism. The functor lands in $\mathsf{Grpd}_{\mathrm{part}}$ by Theorem~\ref{thm:partial-groupoid}. Contravariance follows from the composition formula verified in the original proof.
\end{proof}

\begin{remark}\label{rem:faithfulness-twist}
The functor $\mathcal W$ is not faithful on the full category $\mathcal C_{\mathrm{amp}}^{\mathrm{iso}}$ because two different twists over the same principal groupoid give non-isomorphic diagonal pairs but identical Weyl groupoids (the twist is not recorded by $\mathcal W$). However, when restricted to the subcategory of \emph{untwisted} pairs, $\mathcal W$ becomes faithful (see Theorem~\ref{thm:faithful} below). Thus faithfulness is a feature of the untwisted setting only.
\end{remark}

\section{Tensor products and applications}\label{sec:tensor-products}

In this section we study tensor-product constructions for ample C$^*$-diagonal pairs and the corresponding behaviour of their Weyl groupoids. In particular, we show that the tensor product of ample C$^*$-diagonal pairs again carries a natural diagonal structure whose Weyl groupoid is the product of the original groupoids.

We then apply these results to questions concerning diagonal dimension, AF diagonals, crossed products, graph algebras, and recently constructed exotic diagonals in UHF and Cuntz algebras.

\begin{remark}
If \(\mathcal{G}_1\) and \(\mathcal{G}_2\) are second-countable topologically principal groupoids, then their product \(\mathcal{G}_1 \times \mathcal{G}_2\) is again topologically principal. Indeed, \(\operatorname{Iso}(\mathcal{G}_1 \times \mathcal{G}_2) = \operatorname{Iso}(\mathcal{G}_1) \times \operatorname{Iso}(\mathcal{G}_2)\), so the set of points with trivial isotropy is the product of the corresponding dense sets, and the product of dense subsets is dense in the product topology.
\end{remark}

\begin{lemma}\label{lem:tensor-faithful-expectation}
Let \(E_i:A_i\to D_i\) be faithful conditional expectations. Then \(E_1\otimes E_2: A_1\otimes_{\min}A_2 \to D_1\otimes_{\min}D_2\) is a faithful conditional expectation.
\end{lemma}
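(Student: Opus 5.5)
The plan is to build $E_1\otimes E_2$ as a composition of two "one-leg" maps and to prove faithfulness leg by leg. Conditional expectations are completely positive contractions, so by the standard fact that tensor products of completely positive maps extend to the minimal tensor product, $E_1\otimes E_2$ extends from the algebraic tensor product to a completely positive contraction $A_1\otimes_{\min}A_2\to D_1\otimes_{\min}D_2$. Here $D_1\otimes_{\min}D_2$ sits in $A_1\otimes_{\min}A_2$ because the minimal norm is injective. On elementary tensors $d_1\otimes d_2$ the map is the identity, and it has the $D_1\otimes D_2$-bimodule property on the algebraic level. By continuity it is therefore an idempotent onto $D_1\otimes_{\min}D_2$ of norm one, that is, a conditional expectation; alternatively one can invoke Tomiyama's theorem.

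For faithfulness I would factor
\[
E_1\otimes E_2=(\id_{D_1}\otimes E_2)\circ(E_1\otimes\id_{A_2}),
\]
and show that each slice-type map $E\otimes\id_B$ is faithful whenever $E$ is faithful. The second factor is applied on $D_1\otimes_{\min}A_2$, which is handled in the same way with the roles of the legs exchanged. Once each factor is faithful, the composition is faithful, since each map sends positive elements to positive elements.

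The key step, and the main obstacle, is that $E\otimes\id_B$ is faithful for $E:A\to D$ faithful and any $C^*$-algebra $B$. This is not formal, because faithfulness need not pass to tensor products without more care. I would handle it by testing against states. Suppose $x\ge0$ in $A\otimes_{\min}B$ with $(E\otimes\id)(x)=0$. For every state $\varphi$ on $D$ and every state $\psi$ on $B$ we then get
\[
\bigl((\varphi\circ E)\otimes\psi\bigr)(x)=0.
\]
Now choose a faithful family of states on $D$, for instance point evaluations at a dense set in $\operatorname{Spec}D$, which is available since we are working with separable algebras. The states $\varphi\circ E$ then form a family of states on $A$ whose GNS representations, summed, give a faithful representation $\pi$ of $A$; this uses the faithfulness of $E$. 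Similarly, take a faithful representation $\sigma$ of $B$. Then $\pi\otimes\sigma$ is faithful on $A\otimes_{\min}B$ by the definition of the minimal norm. I would next show that $(\pi\otimes\sigma)(x)$ has vanishing vector states on the cyclic vectors. More precisely, applying the argument to $(a\otimes b)^*x(a\otimes b)$ would give $\langle(\pi\otimes\sigma)(x)\xi,\xi\rangle=0$ on a total set of vectors $\xi=\pi(a)\xi_\varphi\otimes\eta$, which forces $x=0$ by positivity.

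The delicate point is the step $E\bigl((a^*\otimes 1)x(a\otimes 1)\bigr)$. In the tensor setting $E\otimes\id$ is only a bimodule map over $D\otimes B$, not over $A\otimes 1$, so we cannot conjugate inside it. I would therefore work directly with $x^{1/2}(a\otimes b)$, using $(E\otimes\id)\bigl(y^*y\bigr)$. If this approach stalls, the fallback is the standard argument in Brown--Ozawa, which proves that $\varphi\otimes\psi$ is faithful on $A\otimes_{\min}B$ for faithful states via the tensor product of GNS representations, and to adapt it to conditional expectations by composing with a faithful family of states on $D\otimes_{\min}D'$.
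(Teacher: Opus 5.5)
Your construction of $E_1\otimes E_2$ as a conditional expectation and your factorization into one-leg maps are fine. There is a genuine gap, though, in the key step: the faithfulness of $E\otimes\id_B$. To get vanishing vector states at $\pi(a)\xi_\varphi\otimes\eta$ you need
$\bigl((\varphi\circ E)\otimes\omega_\eta\bigr)\bigl((a^*\otimes 1)x(a\otimes 1)\bigr)=0$.
Nothing you state yields this from $(E\otimes\id)(x)=0$. A conditional expectation that kills a positive element need not kill its conjugates; that is exactly the faithfulness you are trying to prove.

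Your proposed repair is circular. For $y=x^{1/2}(a\otimes b)$ one has $y^*y=(a\otimes b)^*x(a\otimes b)$, which is the very element you cannot control. The other product, $yy^*\le\|a\|^2\|b\|^2x$, is controlled, but it does not produce the vectors you need. As written, you only get $(\pi\otimes\sigma)(x)^{1/2}(\xi_\varphi\otimes\eta)=0$, and the vectors $\xi_\varphi\otimes\eta$ are not total.

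The missing idea is to slice on the other leg first. For a state $\omega$ on $B$, the slice map $\id_A\otimes\omega\colon A\otimes_{\min}B\to A$ is positive and satisfies $E\circ(\id_A\otimes\omega)=(\id_D\otimes\omega)\circ(E\otimes\id_B)$; check this on elementary tensors and extend by continuity.
\begin{itemize}
\item Hence $(\id_A\otimes\omega)(x)\ge0$ has zero image under $E$, so it vanishes by faithfulness of $E$.
\item Consequently $(\theta\otimes\omega)(x)=0$ for all states $\theta$ on $A$ and $\omega$ on $B$.
\item Positivity and vector states in a faithful product representation then give $x=0$.
\end{itemize}
This also rescues your GNS route, since $(\id_A\otimes\omega_\eta)\bigl((a^*\otimes1)x(a\otimes1)\bigr)=a^*(\id_A\otimes\omega_\eta)(x)a=0$. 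With this step, the representation built from $\varphi\circ E$ becomes unnecessary.

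Your fallback, in its product-state form, is precisely the paper's proof. The paper picks faithful states $\varphi_i$ on $D_i$ (available by separability) and notes that $\varphi_i\circ E_i$ is faithful on $A_i$. It then uses that a product of faithful states is faithful on the minimal tensor product. So $\bigl((\varphi_1\circ E_1)\otimes(\varphi_2\circ E_2)\bigr)(x)=(\varphi_1\otimes\varphi_2)\bigl((E_1\otimes E_2)(x)\bigr)=0$ forces $x=0$, with no factorization needed. Two cautions apply to that fallback. First, the fact about product states is itself proved by the slice argument above; a direct vector-state computation in the product GNS representation meets the same conjugation obstruction. Second, composing with a general faithful family of states on $D_1\otimes_{\min}D_2$ merely restates $(E_1\otimes E_2)(x)=0$. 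Only product states with faithful factors do the work.
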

\begin{proof}
By standard tensor-product theory, the map is a conditional expectation. For faithfulness, let $x\in A_1\otimes_{\min}A_2$ be positive with $(E_1\otimes E_2)(x)=0$. Choose faithful states $\varphi_i$ on $D_i$; then $\varphi_i\circ E_i$ are faithful on $A_i$. The product state $(\varphi_1\circ E_1)\otimes(\varphi_2\circ E_2)$ is faithful on the minimal tensor product \cite[Chapter 3]{BO}. Applying it to $x$ gives zero, hence $x=0$.
\end{proof}

\begin{proposition}\label{prop:product-weyl}
Let \((A_1,D_1,E_1)\) and \((A_2,D_2,E_2)\) be ample \(C^*\)-diagonal pairs with Weyl groupoids \(\mathcal G_{A_1}\) and \(\mathcal G_{A_2}\). Then
$(D_1\otimes D_2 \subseteq A_1\otimes_{\min}A_2)$ together with $E_{A_1\otimes A_2}:=E_1\otimes E_2$ is an ample \(C^*\)-diagonal pair. Moreover, its Weyl groupoid is naturally isomorphic to
$\mathcal G_{A_1}\times\mathcal G_{A_2}$, and its associated twist is $\Sigma_1\times\Sigma_2$.
\end{proposition}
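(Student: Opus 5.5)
The most efficient route goes through the groupoid models rather than checking the axioms directly on $A_1\otimes_{\min}A_2$. By Kumjian's reconstruction \cite[Theorem~3.1]{Kumjian}, write $(D_i\subseteq A_i)\cong (C(\mathcal G_i^{(0)})\subseteq C_r^*(\mathcal G_i,\Sigma_i))$, with $\mathcal G_i=\mathcal G_{A_i}$ ample, principal and with compact unit space. Set $\mathcal G=\mathcal G_1\times\mathcal G_2$ and $\Sigma=\Sigma_1\times\Sigma_2/\{(z,\bar z):z\in\mathbb T\}$, the product twist, with $\mathbb T$ embedded diagonally. Then $\mathcal G$ is second countable, Hausdorff and étale. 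Its unit space $\mathcal G_1^{(0)}\times\mathcal G_2^{(0)}$ is compact and totally disconnected, so $\mathcal G$ is ample. It is principal because isotropy is computed coordinatewise, as in the remark preceding Lemma~\ref{lem:tensor-faithful-expectation}. Kumjian's converse then makes $(C(\mathcal G^{(0)})\subseteq C_r^*(\mathcal G,\Sigma))$ an ample $C^*$-diagonal pair with Weyl groupoid $\mathcal G$ and twist $\Sigma$.

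The main step is to build a $*$-isomorphism
\[
\Theta: A_1\otimes_{\min}A_2\to C_r^*(\mathcal G,\Sigma),\qquad \Theta(f_1\otimes f_2)(\sigma_1,\sigma_2)=f_1(\sigma_1)f_2(\sigma_2),
\]
where $f_i$ ranges over the $\mathbb T$-equivariant compactly supported sections. To construct it:
\begin{itemize}
\item On the algebraic tensor product, $\Theta$ is a $*$-homomorphism, because convolution and involution factor coordinatewise.
\item It is isometric for the reduced norms. Take $x=(x_1,x_2)\in\mathcal G^{(0)}$. The regular representation of $C_r^*(\mathcal G,\Sigma)$ at $x$ acts on $\ell^2(\mathcal G_{x})\cong\ell^2((\mathcal G_1)_{x_1})\otimes\ell^2((\mathcal G_2)_{x_2})$, twisted appropriately. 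Under this identification it is the tensor product of the regular representations at $x_1$ and $x_2$. The reduced norm is the supremum over $x$, and the spatial norm is attained on tensor products of faithful representations. Hence the two norms agree.
\item Its image is dense, since products of sections supported on bisection rectangles $U_1\times U_2$ span a dense subspace by Stone--Weierstrass and a partition-of-unity argument.
\end{itemize}

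The identification of norms is the step I expect to be the main obstacle. Its delicate point is checking that the twisted regular representations really factor as a tensor product; I would verify this on the basis vectors $\delta_{\sigma_1}\otimes\delta_{\sigma_2}$.

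Once $\Theta$ exists, I would check the remaining claims in turn:
\begin{itemize}
\item $\Theta$ maps $D_1\otimes D_2=C(\mathcal G_1^{(0)})\otimes C(\mathcal G_2^{(0)})$ onto $C(\mathcal G^{(0)})$.
\item $\Theta$ intertwines $E_1\otimes E_2$ with restriction to the unit space. It suffices to check this on elementary tensors, where both sides give $f_1|_{\mathcal G_1^{(0)}}\otimes f_2|_{\mathcal G_2^{(0)}}$.
\item $E_1\otimes E_2$ is faithful by Lemma~\ref{lem:tensor-faithful-expectation}, which agrees with faithfulness of the groupoid expectation.
\end{itemize}
It follows that $(D_1\otimes D_2\subseteq A_1\otimes_{\min}A_2, E_1\otimes E_2)$ is an ample $C^*$-diagonal pair. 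Maximal abelianness, regularity and the UEP all transport through $\Theta$. Regularity can also be seen directly: elementary tensors $n_1\otimes n_2$ of normalizers normalize $D_1\otimes D_2$ and generate the tensor product.

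For naturality of the identification $\mathcal G_{A_1\otimes A_2}\cong\mathcal G_{A_1}\times\mathcal G_{A_2}$, I would define
\[
([n_1,x_1],[n_2,x_2])\mapsto[n_1\otimes n_2,(x_1,x_2)].
\]
By the computation in Lemma~\ref{lem:conjugacy}, $\alpha_{n_1\otimes n_2}=\alpha_{n_1}\times\alpha_{n_2}$. Well-definedness and injectivity come from Lemma~\ref{lem:germ-criterion}: if $n_1d_1=m_1d_1$ and $n_2d_2=m_2d_2$, then $(n_1\otimes n_2)(d_1\otimes d_2)=(m_1\otimes m_2)(d_1\otimes d_2)$. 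Conversely, in the principal setting germs are determined by their partial homeomorphisms. Surjectivity follows because the germs of elementary tensor normalizers already cover $\mathcal G$, since rectangle bisections form a basis. The map is a homeomorphism because it sends basic bisections $\mathcal Z(n_1,V_1)\times\mathcal Z(n_2,V_2)$ to the basic bisections $\mathcal Z(n_1\otimes n_2,V_1\times V_2)$. Finally, the twist is read off through $\Theta$ from the elementary-tensor normalizers: the $\mathbb T$-valued phases multiply, which gives $\Sigma_1\times\Sigma_2$ modulo the antidiagonal $\mathbb T$.
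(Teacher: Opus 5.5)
Your proposal follows the same overall route as the paper. You pass to Kumjian's models $C_r^*(\mathcal G_i,\Sigma_i)$, identify $A_1\otimes_{\min}A_2$ with the twisted groupoid algebra of $\mathcal G_1\times\mathcal G_2$ compatibly with diagonals and expectations, and conclude by Kumjian's converse. The difference is in what is proved rather than cited:
\begin{itemize}
\item The paper gets the isomorphism $\Theta$ in one line from \cite[Lemma~5.1]{BarlakLi2017}, and leaves the Weyl groupoid and twist identification implicit in the reconstruction theorem.
\item You build $\Theta$ by hand from the factorization $\pi_{(x_1,x_2)}\cong\pi_{x_1}\otimes\pi_{x_2}$ of the regular representations. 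This works, because $\bigoplus_{x}\pi_{x}$ is faithful on each factor and the spatial norm can be computed on the tensor product of these faithful representations.
\item You make the product twist precise as $\Sigma_1\times\Sigma_2$ modulo the antidiagonal $\mathbb T$.
\item You use principality, which is what Kumjian's converse actually needs, rather than topological principality.
\item You write down the intrinsic germ map $([n_1,x_1],[n_2,x_2])\mapsto[n_1\otimes n_2,(x_1,x_2)]$. This map is what really substantiates ``naturally'' in the statement.
\end{itemize}
Your version is longer but self-contained and model-independent. The paper's version is shorter but rests entirely on the cited lemma.

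One justification needs replacing. For well-definedness of the germ map you invoke Lemma~\ref{lem:germ-criterion} in the direction (i)$\Rightarrow$(ii). That direction fails for the Weyl groupoid as germs of partial homeomorphisms. For example, $n=1$ and $m=2\cdot 1$ have $\alpha_n=\alpha_m=\mathrm{id}$, so $[n,x]=[m,x]$, yet $nd=md$ forces $d=0$.

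You do not need that lemma here. Since $\alpha_{n_1\otimes n_2}=\alpha_{n_1}\times\alpha_{n_2}$ on $\operatorname{dom}(\alpha_{n_1})\times\operatorname{dom}(\alpha_{n_2})$, the germ at $(x_1,x_2)$ is determined by the germs of $\alpha_{n_i}$ at $x_i$. This gives well-definedness. Principality, meaning that arrows are determined by range and source, then gives injectivity.

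Finally, state explicitly that the given $E_i$ corresponds to restriction to the unit space under Kumjian's identification. This holds because the UEP makes the conditional expectation onto a $C^*$-diagonal unique: $\chi\circ E_i$ must be the unique state extension of each character $\chi$ of $D_i$.
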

\begin{proof}
Since \((A_i,D_i)\) are ample \(C^*\)-diagonal pairs, Kumjian's reconstruction theorem \cite[Theorem~3.1]{Kumjian} yields twists \(\Sigma_i\) over \(\mathcal G_{A_i}\) together with isomorphisms
\[
A_i \cong C_r^*(\mathcal G_{A_i},\Sigma_i), \qquad D_i \cong C_0(\mathcal G_{A_i}^{(0)}).
\]

By \cite[Lemma~5.1]{BarlakLi2017}, we obtain
\[
C_r^*(\mathcal G_{A_1},\Sigma_1) \otimes_{\min} C_r^*(\mathcal G_{A_2},\Sigma_2) \cong C_r^*(\mathcal G_{A_1}\times\mathcal G_{A_2}, \Sigma_1\times\Sigma_2).
\]

Hence, we obtain
\[
A_1\otimes_{\min}A_2 \cong C_r^*(\mathcal G_{A_1}\times\mathcal G_{A_2}, \Sigma_1\times\Sigma_2).
\]

Under this identification, $D_1\otimes D_2$ corresponds to
\[
C_0(\mathcal G_{A_1}^{(0)}) \otimes C_0(\mathcal G_{A_2}^{(0)}) \cong C_0(\mathcal G_{A_1}^{(0)} \times \mathcal G_{A_2}^{(0)}),
\]
which is precisely $C_0((\mathcal G_{A_1}\times\mathcal G_{A_2})^{(0)})$. Faithfulness of $E_1\otimes E_2$ follows from Lemma~\ref{lem:tensor-faithful-expectation}.

Since $\mathcal G_{A_1}\times\mathcal G_{A_2}$ is again an ample Hausdorff \'{e}tale topologically principal groupoid (the product of topologically principal groupoids is topologically principal when both are second-countable, which holds here; see the remark above), the canonical inclusion
\[
C_0((\mathcal G_{A_1}\times\mathcal G_{A_2})^{(0)}) \subseteq C_r^*(\mathcal G_{A_1}\times\mathcal G_{A_2}, \Sigma_1\times\Sigma_2)
\]
is an ample \(C^*\)-diagonal pair by Kumjian's reconstruction theorem \cite[Theorem~3.1]{Kumjian}. Finally, since products of \'{e}tale groupoids are \'{e}tale and products of compact open bisections are compact open bisections, the groupoid
\(
\mathcal G_{A_1}\times \mathcal G_{A_2}
\)
is ample.
\end{proof}

\begin{corollary}\label{cor:product-dimension}
Under the hypotheses of Proposition~\ref{prop:product-weyl}, if additionally the Weyl groupoids \(\mathcal{G}_{A_i}\) are principal and have finite dynamic asymptotic dimension, then
\[
\dim_{\mathrm{diag}}(D_1 \otimes D_2 \subset A_1 \otimes A_2) \leq \dim_{\mathrm{diag}}(D_1 \subset A_1) + \dim_{\mathrm{diag}}(D_2 \subset A_2).
\]
\end{corollary}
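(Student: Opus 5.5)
The plan is to reduce the inequality to a statement about groupoids and then invoke the known subadditivity of dynamic asymptotic dimension under products. First, by Proposition~\ref{prop:product-weyl}, the pair $(D_1\otimes D_2\subset A_1\otimes_{\min}A_2)$ is an ample C$^*$-diagonal pair. Its Weyl groupoid is naturally isomorphic to $\mathcal G_{A_1}\times\mathcal G_{A_2}$, with twist $\Sigma_1\times\Sigma_2$. Since the hypotheses make each $\mathcal G_{A_i}$ principal, the product is principal as well, because its isotropy is the product of the isotropies.

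The second step is to use the link between diagonal dimension and dynamic asymptotic dimension for ample principal Weyl groupoids; this is the framework of Kopsacheilis--Winter \cite{KW}. The relevant fact is that, in this setting, $\dim_{\mathrm{diag}}(D\subset A)$ agrees with, or is at least bounded above by, $\operatorname{dad}(\mathcal G_A)$. I would cite exactly the form available there. If only the upper bound $\dim_{\mathrm{diag}}\le \operatorname{dad}$ holds in general, the argument will need equality on the factors. That equality should be available for ample principal groupoids with finite $\operatorname{dad}$, since the diagonal-dimension decompositions in \cite{KW} come from clopen coverings of $\mathcal G^{(0)}$ by sets generating precompact subgroupoids.

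The third step is the groupoid-level estimate
\[
\operatorname{dad}(\mathcal G_{A_1}\times\mathcal G_{A_2})\le \operatorname{dad}(\mathcal G_{A_1})+\operatorname{dad}(\mathcal G_{A_2}).
\]
I would prove this in the standard way, or cite it. Take a compact $K\subseteq\mathcal G_1\times\mathcal G_2$ and enclose it in $K_1\times K_2$. For each factor choose open covers $U^i_0,\dots,U^i_{d_i}$ of $\mathcal G_i^{(0)}$ adapted to $K_i$, meaning that $K_i$ restricted to each $U^i_j$ generates a precompact subgroupoid. The products $U^1_j\times U^2_k$ give $(d_1+1)(d_2+1)$ sets, and each generates a precompact subgroupoid because products of precompact sets are precompact. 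To obtain only $d_1+d_2+1$ colours, regroup them by $j+k=\ell$. Each colour class must then be made a disjoint union. This uses the usual shrinking trick from the Hurewicz-type product argument for asymptotic dimension, or the Guentner--Willett--Yu product theorem for dynamic asymptotic dimension. Total disconnectedness lets me pick the covers to be clopen and pairwise disjoint within a level.

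Combining the three steps gives $\dim_{\mathrm{diag}}(D_1\otimes D_2\subset A_1\otimes A_2)\le \operatorname{dad}(\mathcal G_{A_1}\times\mathcal G_{A_2})\le \operatorname{dad}(\mathcal G_{A_1})+\operatorname{dad}(\mathcal G_{A_2})=\dim_{\mathrm{diag}}(D_1\subset A_1)+\dim_{\mathrm{diag}}(D_2\subset A_2)$. I expect the main obstacle to be step two: pinning down a precise citation giving equality $\dim_{\mathrm{diag}}=\operatorname{dad}$ on the factors, and at least the upper bound on the product, in the twisted ample principal setting. The product estimate in step three is comparatively routine, and the regrouping there is the only combinatorial care needed.
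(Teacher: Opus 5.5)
Your argument has the same structure as the paper's proof. You identify the Weyl groupoid of the tensor product with $\mathcal G_{A_1}\times\mathcal G_{A_2}$, note that it is principal, use $\dim_{\mathrm{diag}}=\operatorname{dad}$ on the factors and on the product, and finish with subadditivity of $\operatorname{dad}$ under products. Two points need correcting, and the second is a real gap if you take the self-contained route.

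\emph{The source of the equality.} The equality $\dim_{\mathrm{diag}}=\operatorname{dad}$ for principal, second-countable, ample groupoids of finite dynamic asymptotic dimension is Theorem~E of Li--Liao--Winter, which is what the paper cites. It is not in Kopsacheilis--Winter, whose paper concerns dynamical comparison. Your heuristic about clopen covers cannot replace that citation. The direction $\operatorname{dad}\le\dim_{\mathrm{diag}}$, which you need on the factors, is the nontrivial one.

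\emph{The product estimate.} This step is not routine, and your sketch of it fails. Grouping the sets $U^1_j\times U^2_k$ with $j+k=\ell$ does not work even when the pieces are pairwise disjoint and clopen. Take an element $(g_1,g_2)\in K_1\times K_2$ with source in $U^1_0\times U^2_1$ and range in $U^1_1\times U^2_0$. It belongs to the part of $K$ over the level-$1$ set. Yet $g_1$ lies in neither of the precompact subgroupoids attached to $U^1_0$ and $U^1_1$, since every element of those subgroupoids has source and range inside a single $U^1_j$. So nothing in your data controls the subgroupoid generated by such elements.

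The pieces of one colour must be separated relative to $K$, not merely disjoint. Arranging that separation is the real content of the product theorem. The paper simply cites B\"onicke's subadditivity result (Theorem~A(3)) at this step, and you should do the same rather than attempt the regrouping.
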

\begin{proof}
By Proposition~\ref{prop:product-weyl}, the Weyl groupoid of \((D_1 \otimes D_2 \subset A_1 \otimes A_2)\) is \(\mathcal{G}_{A_1} \times \mathcal{G}_{A_2}\). Since \(\mathcal{G}_{A_1}\) and \(\mathcal{G}_{A_2}\) are principal, their product is also principal (the isotropy of \((g_1, g_2)\) is the product of the isotropy groups, each trivial). Therefore, by \cite[Theorem~E]{LiLiaoWinter2023} (which applies to principal, second-countable, ample Hausdorff étale groupoids with finite dynamic asymptotic dimension), we have the equality
\[
\dim_{\mathrm{diag}}(D_1 \otimes D_2 \subset A_1 \otimes A_2) = \operatorname{dad}(\mathcal{G}_{A_1} \times \mathcal{G}_{A_2}).
\]

Applying the same theorem to each factor (using that \(\mathcal{G}_{A_i}\) are principal) gives
\[
\dim_{\mathrm{diag}}(D_i \subset A_i) = \operatorname{dad}(\mathcal{G}_{A_i}), \qquad i = 1,2.
\]

Now by \cite[Theorem~A(3)]{Bonicke2024}, the dynamic asymptotic dimension satisfies the subadditivity inequality:
\[
\operatorname{dad}(\mathcal{G}_{A_1} \times \mathcal{G}_{A_2}) \leq \operatorname{dad}(\mathcal{G}_{A_1}) + \operatorname{dad}(\mathcal{G}_{A_2}).
\]

Combining these equalities/inequality yields the desired result.
\end{proof}

\begin{proposition}\label{prop:monoidal}
The subcategory \(\mathcal{C}_{\mathrm{amp}}^{\mathrm{iso}}\) of \(\mathcal{C}_{\mathrm{amp}}\) admits a natural symmetric monoidal structure defined as follows:
\begin{itemize}
    \item[(i)] \textbf{Tensor product on objects:} For objects \((A_1, D_1, E_1), (A_2, D_2, E_2)\) in \(\mathcal{C}_{\mathrm{amp}}^{\mathrm{iso}}\), define
    \[
    (A_1, D_1, E_1) \otimes (A_2, D_2, E_2) = (A_1 \otimes_{\min} A_2,\ D_1 \otimes_{\min} D_2,\ E_1 \otimes E_2).
    \]
    \item[(ii)] \textbf{Tensor product on morphisms:} For \(\Phi_1: (A_1,D_1) \to (B_1,C_1)\) and \(\Phi_2: (A_2,D_2) \to (B_2,C_2)\) in \(\mathcal{C}_{\mathrm{amp}}^{\mathrm{iso}}\), define
    \[
    (\Phi_1 \otimes \Phi_2)(a_1 \otimes a_2) = \Phi_1(a_1) \otimes \Phi_2(a_2).
    \]
    \item[(iii)] \textbf{Unit object:} \(\mathbb{1} = (\mathbb{C}, \mathbb{C}, \mathrm{id}_{\mathbb{C}})\).
    \item[(iv)] \textbf{Associator, unitors, symmetry:} The usual associativity, unit, and symmetry isomorphisms of the spatial tensor product.
\end{itemize}

Then \((\mathcal{C}_{\mathrm{amp}}^{\mathrm{iso}}, \otimes, \mathbb{1})\) is a symmetric monoidal category.
\end{proposition}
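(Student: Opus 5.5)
The plan is to reduce everything to the symmetric monoidal structure of $(\mathrm{C}^*\text{-alg}, \otimes_{\min}, \mathbb{C})$. Once the tensor product of objects and of morphisms stays inside $\mathcal{C}_{\mathrm{amp}}^{\mathrm{iso}}$, and the structure isomorphisms are morphisms there, the coherence axioms (pentagon, triangle, hexagon) are identities between $*$-homomorphisms. They can therefore be inherited verbatim from the spatial tensor product. I would carry out four steps in order.

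\emph{Objects.} By Proposition~\ref{prop:product-weyl}, $(A_1\otimes_{\min}A_2, D_1\otimes D_2, E_1\otimes E_2)$ is an ample C$^*$-diagonal pair, with faithfulness of $E_1\otimes E_2$ supplied by Lemma~\ref{lem:tensor-faithful-expectation}. The unit $(\mathbb C,\mathbb C,\mathrm{id})$ is trivially an ample diagonal pair, since its spectrum is a point.

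\emph{Morphisms.} Let $\Phi_i$ be morphisms in $\mathcal C_{\mathrm{amp}}^{\mathrm{iso}}$. Then $\Phi_1\otimes\Phi_2$ is a unital $*$-homomorphism on the minimal tensor product by functoriality of $\otimes_{\min}$. I would verify the conditions as follows.
\begin{itemize}
\item Condition (D) holds because $\Phi_1\otimes\Phi_2$ maps $D_1\odot D_2$ into $C_1\odot C_2$, and the statement follows by continuity.
\item The restriction $(\Phi_1\otimes\Phi_2)|_{D_1\otimes D_2}=\Phi_1|_{D_1}\otimes\Phi_2|_{D_2}$ is a tensor product of isomorphisms, hence an isomorphism onto $C_1\otimes C_2$.
\item Condition (E) is checked on elementary tensors, $(E_{B_1}\otimes E_{B_2})(\Phi_1\otimes\Phi_2)=(\Phi_1E_{A_1})\otimes(\Phi_2E_{A_2})$, and extended by linearity and continuity.
\item For condition (N), the proof of Lemma~\ref{lem:normalizer-preservation} used only (D), the surjectivity of $\Phi|_D$, and the homomorphism property. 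It therefore shows that \emph{any} normalizer of $D_1\otimes D_2$, not merely elementary tensors of normalizers, is mapped to a normalizer of $C_1\otimes C_2$.
\end{itemize}

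\emph{The main obstacle} is the injectivity clause of (N). I would handle it globally rather than on normalizers, by showing $\Phi_1\otimes\Phi_2$ is injective. Lemma~\ref{lem:Phi-injective} already gives that each $\Phi_i$ is injective. An injective $*$-homomorphism is isometric, and the minimal tensor product of injective $*$-homomorphisms is injective, since $\otimes_{\min}$ is injective and spatial. Alternatively, one can apply Lemma~\ref{lem:ideal-structure} to the effective groupoid $\mathcal G_{A_1}\times\mathcal G_{A_2}$: the kernel of $\Phi_1\otimes\Phi_2$ is generated by its intersection with $D_1\otimes D_2$, and that intersection vanishes because the restriction to the diagonal is an isomorphism. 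The caveat is that this lemma is stated in the untwisted setting, so the spatial argument is the one I would rely on. Functoriality of $\otimes$, namely $(\Psi_1\otimes\Psi_2)(\Phi_1\otimes\Phi_2)=\Psi_1\Phi_1\otimes\Psi_2\Phi_2$ and $\mathrm{id}\otimes\mathrm{id}=\mathrm{id}$, is checked on elementary tensors.

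\emph{Structure isomorphisms.} The associator $(A_1\otimes A_2)\otimes A_3\cong A_1\otimes(A_2\otimes A_3)$, the unitors $\mathbb C\otimes A\cong A$, and the flip $A_1\otimes A_2\cong A_2\otimes A_1$ are $*$-isomorphisms. On elementary tensors each maps the diagonal onto the diagonal, and each intertwines the product expectations, since for instance $(E_1\otimes E_2)\otimes E_3$ corresponds to $E_1\otimes(E_2\otimes E_3)$. As isomorphisms they restrict to bijections on normalizers, so they are morphisms in $\mathcal C_{\mathrm{amp}}^{\mathrm{iso}}$ with inverses there. Naturality in each variable and the coherence diagrams hold already at the level of underlying $*$-homomorphisms. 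Composition in $\mathcal C_{\mathrm{amp}}^{\mathrm{iso}}$ is composition of maps, so these properties transfer directly and complete the proof.
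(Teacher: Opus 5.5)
Your proposal is correct, and it has the same four-step skeleton as the paper:
\begin{itemize}
\item objects via Proposition~\ref{prop:product-weyl};
\item conditions (D), (E), (N) for $\Phi_1\otimes\Phi_2$;
\item the structure maps are morphisms;
\item coherence is inherited from $\otimes_{\min}$.
\end{itemize}
Your injectivity argument is also the paper's: each $\Phi_i$ is injective, and the spatial tensor product of injective $*$-homomorphisms is injective.

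The real difference is in the normalizer-preservation half of (N) and in how the structure maps are handled. The paper argues through the partial groupoid morphisms $\rho_{\Phi_1}\times\rho_{\Phi_2}$ and the product-groupoid picture. It asserts that normalizers of $D_1\otimes D_2$ ``correspond to germs'' and are carried to germs of $\mathcal G_{B_1}\times\mathcal G_{B_2}$. This is only sketched: $\rho_\Phi$ actually goes from $\mathcal H_\Phi\subseteq\mathcal G_B$ to $\mathcal G_A$, and a germ correspondence does not by itself show that an arbitrary normalizer is sent to a normalizer.

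You instead observe that the proof of Lemma~\ref{lem:normalizer-preservation} needs only $\Phi(D)=C$. That gives a short, rigorous argument covering every normalizer of $D_1\otimes D_2$, not just elementary tensors. It rests on your check that $(\Phi_1\otimes\Phi_2)|_{D_1\otimes D_2}=\Phi_1|_{D_1}\otimes\Phi_2|_{D_2}$ is an isomorphism onto $C_1\otimes C_2$. The paper omits this check, but it is needed anyway for $\Phi_1\otimes\Phi_2$ to be a morphism of $\mathcal C_{\mathrm{amp}}^{\mathrm{iso}}$.

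Similarly, you verify directly on elementary tensors that the associator, unitors and flip preserve diagonals and expectations. The paper instead passes through Kumjian reconstruction and groupoid isomorphisms. The paper's route ties the result to the groupoid picture of Section~\ref{sec:partial}; yours is more elementary, self-contained, and closes these gaps.

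One small point: choosing the spatial argument does not actually avoid the untwisted caveat you raise. Lemma~\ref{lem:Phi-injective}, which you still use for each factor, is itself proved via Lemma~\ref{lem:ideal-structure}. You can remove that dependence entirely. If $\Phi(a)=0$, then $\Phi(E_A(a^*a))=E_B(\Phi(a^*a))=0$, so $E_A(a^*a)=0$ because $\Phi$ is injective on $D$. Faithfulness of $E_A$ then gives $a=0$. Apply this directly to $\Phi_1\otimes\Phi_2$, using condition (E), which you have verified, and Lemma~\ref{lem:tensor-faithful-expectation}. You then get injectivity of the tensor product map with no appeal to ideal structure or to injectivity of $\otimes_{\min}$, and it works in the twisted setting as well.
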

\begin{proof}
We must verify that:

1. The tensor product of objects lies in \(\mathcal{C}_{\mathrm{amp}}^{\mathrm{iso}}\) (i.e., it is an ample C*-diagonal pair).
2. The tensor product of morphisms is a morphism in \(\mathcal{C}_{\mathrm{amp}}^{\mathrm{iso}}\).
3. The structural isomorphisms (associator, unitors, symmetry) are morphisms in \(\mathcal{C}_{\mathrm{amp}}^{\mathrm{iso}}\).
4. The coherence axioms hold.

\textbf{Step 1: Tensor product of objects is in \(\mathcal{C}_{\mathrm{amp}}^{\mathrm{iso}}\).}
By Proposition~\ref{prop:product-weyl}, \((D_1 \otimes D_2 \subseteq A_1 \otimes A_2)\) with expectation \(E_1 \otimes E_2\) is an ample C*-diagonal pair. Hence the tensor product of objects is well-defined.

\textbf{Step 2: Tensor product of morphisms.}
Let \(\Phi_1:(A_1,D_1)\to(B_1,C_1)\) and \(\Phi_2:(A_2,D_2)\to(B_2,C_2)\) be morphisms in \(\mathcal{C}_{\mathrm{amp}}^{\mathrm{iso}}\). Then \(\Phi_1\otimes\Phi_2\) is a unital *-homomorphism.

Condition (D) holds because \((\Phi_1\otimes\Phi_2)(D_1\otimes D_2) = \Phi_1(D_1)\otimes\Phi_2(D_2) \subseteq C_1\otimes C_2\).

Condition (E) follows from
\[
(E_1\otimes E_2)\circ(\Phi_1\otimes\Phi_2) = (\Phi_1\circ E_1)\otimes(\Phi_2\circ E_2) = (E_1'\circ\Phi_1)\otimes(E_2'\circ\Phi_2) = (E_1'\otimes E_2')\circ(\Phi_1\otimes\Phi_2),
\]
where \(E_i'\) denotes the conditional expectation on the codomain \(B_i\).

Condition (N): By Theorem~\ref{thm:partial-groupoid}, each morphism \(\Phi_i\) in \(\mathcal{C}_{\mathrm{amp}}^{\mathrm{iso}}\) induces a partial groupoid morphism \(\rho_{\Phi_i}: \mathcal H_{\Phi_i} \to \mathcal G_{A_i}\). By Proposition~\ref{prop:product-weyl}, the tensor product pair corresponds under Kumjian's reconstruction to the product groupoid \(\mathcal G_{A_1}\times\mathcal G_{A_2}\). Thus normalizers of \(D_1\otimes D_2\) correspond to germs in \(\mathcal G_{A_1}\times\mathcal G_{A_2}\), and \((\Phi_1\otimes\Phi_2)\) maps each such germ to the corresponding germ in \(\mathcal G_{B_1}\times\mathcal G_{B_2}\) via the induced partial groupoid morphism \(\rho_{\Phi_1}\times\rho_{\Phi_2}\). Hence \((\Phi_1\otimes\Phi_2)\) preserves normalizers. Moreover, since \(\Phi_1\) and \(\Phi_2\) are injective *-homomorphisms, \(\Phi_1\otimes\Phi_2\) is injective on the minimal tensor product (see \cite[Proposition~3.6.2]{BO}), so its restriction to the normalizer algebra is injective. Thus \(\Phi_1\otimes\Phi_2\) is a morphism in \(\mathcal{C}_{\mathrm{amp}}^{\mathrm{iso}}\).

\textbf{Step 3: Structural isomorphisms are morphisms.}
We must verify that the associator
\[
\alpha_{A,B,F}: (A\otimes B)\otimes F \to A\otimes(B\otimes F), \quad (a\otimes b)\otimes f \mapsto a\otimes(b\otimes f),
\]
the unitors
\[
\lambda_A: \mathbb{C}\otimes A \to A, \quad z\otimes a \mapsto za, \qquad \rho_A: A\otimes\mathbb{C} \to A, \quad a\otimes z \mapsto za,
\]
and the symmetry
\[
\sigma_{A,B}: A\otimes B \to B\otimes A, \quad a\otimes b \mapsto b\otimes a,
\]
are morphisms in \(\mathcal{C}_{\mathrm{amp}}^{\mathrm{iso}}\).

Using Proposition~\ref{prop:product-weyl}, each of these structural isomorphisms corresponds under Kumjian's reconstruction to the obvious groupoid isomorphism on the product groupoids:
- The associator corresponds to the canonical isomorphism \((G_1\times G_2)\times G_3 \cong G_1\times(G_2\times G_3)\).
- The unitors correspond to the canonical isomorphisms \(\{e\}\times G \cong G\) and \(G\times\{e\}\cong G\).
- The symmetry corresponds to the flip isomorphism \(G_1\times G_2 \cong G_2\times G_1\).

Since groupoid isomorphisms preserve the diagonal \(C_0(\mathcal G^{(0)})\) and commute with the canonical conditional expectation, each of these structural isomorphisms satisfies conditions (D), (E), and (N). Injectivity on normalizers follows from being an isomorphism. Thus all structural isomorphisms are morphisms in \(\mathcal{C}_{\mathrm{amp}}^{\mathrm{iso}}\).

\textbf{Step 4: Coherence axioms.}
The pentagon, triangle, and hexagon axioms hold because they hold in the category of C*-algebras with the spatial tensor product, and all arrows are morphisms in \(\mathcal{C}_{\mathrm{amp}}^{\mathrm{iso}}\).

Thus \(\mathcal{C}_{\mathrm{amp}}^{\mathrm{iso}}\) is a symmetric monoidal category. Note that the larger category \(\mathcal{C}_{\mathrm{amp}}\) is not monoidal because the tensor product of arbitrary morphisms may not satisfy condition (N).
\end{proof}

\begin{remark}\label{rem:non-AF-obstruction}
It is expected that if the Weyl groupoid G of an ample C*-diagonal pair is non-amenable, then the pair cannot be AF. Indeed, AF C*-algebras are nuclear and admit Bratteli diagram models, and this suggests that the associated Weyl groupoids should be amenable, although we do not know of a complete proof. A full argument would require establishing that AF diagonal pairs have AF Weyl groupoids in the precise groupoid-theoretic sense, which is beyond the scope of this paper. Nevertheless, the converse fails: Kopsacheilis and Winter~\cite{KopsacheilisWinter2025} constructed non-AF diagonals with amenable Weyl groupoids (Example~\ref{ex:car-exotic}), showing that amenability does not characterize AF diagonals.
\end{remark}

\section{Reconstruction from the Weyl groupoid}\label{sec:rigidity}

In this section we prove that the Weyl groupoid is a complete invariant for untwisted ample C*-diagonal pairs. This provides a concrete invariant that distinguishes non-isomorphic diagonal pairs.

\begin{definition}\label{def:untwisted}
Let \(\mathcal{C}_{\mathrm{amp}}^{\mathrm{iso,untw}}\) denote the full subcategory of \(\mathcal{C}_{\mathrm{amp}}^{\mathrm{iso}}\) consisting of untwisted ample C*-diagonal pairs \((D \subset A)\) such that the Weyl groupoid \(\mathcal G_{(A,D)}\) is principal. Recall that "untwisted" means that the Kumjian--Renault twist associated to the pair is trivial.
\end{definition}

\begin{proposition}\label{prop:weyl-functorial}
Let $\Phi:(A_1,D_1)\longrightarrow (A_2,D_2)$ be an isomorphism of $C^*$-diagonal pairs. Then $\Phi$ induces an isomorphism $\mathcal W(\Phi):\mathcal G_{(A_1,D_1)}\longrightarrow\mathcal G_{(A_2,D_2)}$ between the associated Weyl groupoids.
\end{proposition}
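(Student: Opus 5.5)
The plan is to realize $\mathcal W(\Phi)$ through the partial groupoid morphism of Theorem~\ref{thm:partial-groupoid} and to show that, for an isomorphism, it is a genuine groupoid isomorphism. Here $\mathcal W$ is contravariant: $\rho_\Phi$ maps $\mathcal G_{(A_2,D_2)}$ to $\mathcal G_{(A_1,D_1)}$. The isomorphism in the forward direction is then $\rho_\Phi^{-1}$, or equivalently $\rho_{\Phi^{-1}}$. Since $\Phi$ is an isomorphism of diagonal pairs, $\Phi|_{D_1}:D_1\to D_2$ is a $*$-isomorphism. Hence $\Phi$ is a morphism in $\mathcal C_{\mathrm{amp}}^{\mathrm{iso}}$, and Gelfand duality gives a homeomorphism $h:\mathcal G_{(A_2,D_2)}^{(0)}\to\mathcal G_{(A_1,D_1)}^{(0)}$. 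The same holds for $\Phi^{-1}$, with $h_{\Phi^{-1}}=h^{-1}$.

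\textbf{Step 1: the domain is everything.} I will show that $\mathcal N_\Phi=\mathcal N_{A_2}(D_2)$. Let $n\in\mathcal N_{A_2}(D_2)$. Applying Lemma~\ref{lem:normalizer-preservation} to $\Phi^{-1}$, which maps $D_2$ onto $D_1$, gives $m:=\Phi^{-1}(n)\in\mathcal N_{A_1}(D_1)$ with $\Phi(m)=n$. Therefore $\mathcal H_\Phi$ contains every germ $[n,y]$, so $\mathcal H_\Phi=\mathcal G_{(A_2,D_2)}$. By Lemma~\ref{lem:rho-properties}, $\rho_\Phi$ is then a continuous, injective groupoid homomorphism defined on all of $\mathcal G_{(A_2,D_2)}$, open onto its image, and restricting to $h$ on units.

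\textbf{Step 2: surjectivity.} Take any germ $[m,x]\in\mathcal G_{(A_1,D_1)}$. Then $\Phi(m)\in\mathcal N_\Phi$, and by Lemma~\ref{lem:domain-compatibility} the point $y:=h^{-1}(x)$ lies in $\operatorname{dom}(\alpha_{\Phi(m)})$. We have $L(\Phi(m))=m$, so
\[
\rho_\Phi([\Phi(m),y])=[m,h(y)]=[m,x].
\]
Thus $\rho_\Phi$ is a bijective continuous homomorphism which is open onto its image; since the image is the whole groupoid, it is open, and $\rho_\Phi$ is a homeomorphism. Alternatively, one can apply the construction to $\Phi^{-1}$ and check directly on representatives that $\rho_{\Phi^{-1}}\circ\rho_\Phi=\mathrm{id}$ and $\rho_\Phi\circ\rho_{\Phi^{-1}}=\mathrm{id}$, both composites sending $[n,y]\mapsto[n,y]$. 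This also shows that $\mathcal W(\Phi)$ is invertible in $\mathsf{Grpd}_{\mathrm{part}}$, using the composition rule of Proposition~\ref{prop:grpdpart-category} with all domains full.

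\textbf{Main obstacle.} The delicate point is only well-definedness of the germ maps under change of representative. This is already handled in Lemma~\ref{lem:rho-properties} via Lemma~\ref{lem:germ-criterion}, together with the injectivity of $\Phi$, which is automatic here. A second, minor issue is that Lemma~\ref{lem:normalizer-preservation} is stated for the fixed $\Phi$. I expect no real difficulty there, because its proof uses only that the map is a $*$-homomorphism carrying the diagonal onto the diagonal, and this holds for $\Phi^{-1}$.
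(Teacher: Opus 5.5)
Your proof is correct, and it arrives at the same map as the paper by a different route.

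\textbf{The paper's route.} The paper argues directly. It writes down the forward map $[n,x]\mapsto[\Phi(n),h^{-1}(x)]$ and checks well-definedness by pushing the witness $d$ of Lemma~\ref{lem:germ-criterion} through $\Phi$. It then dismisses compatibility with range, source, products and inverses, and bicontinuity via $\Phi^{-1}$, as straightforward.

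\textbf{Your route.} You specialize Theorem~\ref{thm:partial-groupoid}. Once $\mathcal N_\Phi=\mathcal N_{A_2}(D_2)$, so that $\mathcal H_\Phi$ is the whole groupoid, and once $\rho_\Phi$ is onto, all the structural properties are inherited from Lemma~\ref{lem:rho-properties}. The paper's forward map is exactly your $\rho_{\Phi^{-1}}=\rho_\Phi^{-1}$. This gives your version two real advantages:
\begin{itemize}
\item[(a)] it supplies the verifications the paper leaves to the reader;
\item[(b)] it reconciles the covariant arrow written in the proposition with the contravariant functor of Theorem~\ref{thm:weyl-functor}, exhibiting $\mathcal W(\Phi)$ as an isomorphism in $\mathsf{Grpd}_{\mathrm{part}}$ with full domain, in the sense of Lemma~\ref{lem:grpdpart-isomorphism}.
\end{itemize}
You were also right that injectivity of $\Phi$ is free here. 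So nothing in your argument routes through Lemma~\ref{lem:Phi-injective}, and hence nothing depends on Lemma~\ref{lem:ideal-structure}.

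\textbf{What the paper's route buys, and the fix for yours.} The direct argument buys generality. The proposition is stated for arbitrary $C^*$-diagonal pairs. Section~\ref{sec:partial}, by contrast, works under the standing hypotheses that the pairs are ample and untwisted. To match the statement as written, add one sentence observing what Lemmas~\ref{lem:normalizer-preservation}, \ref{lem:domain-compatibility}, \ref{lem:conjugacy}, \ref{lem:L-properties} and \ref{lem:rho-properties} actually use. Their proofs need only that $\Phi$ is an injective $*$-homomorphism restricting to an isomorphism of diagonals, together with Lemma~\ref{lem:germ-criterion}, which is stated for arbitrary $C^*$-diagonal pairs. They never use total disconnectedness of the spectrum or triviality of the twist. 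The paper's direct argument needs no such remark.
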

\begin{proof}
Since $\Phi(D_1)=D_2$, the restriction $\Phi|_{D_1}:D_1\to D_2$ is a $C^*$-isomorphism. By Gelfand duality, this induces a homeomorphism $h:\widehat{D_2}\to \widehat{D_1}$. Moreover, if $n\in \mathcal N_{A_1}(D_1)$, then $\Phi(n)D_2\Phi(n)^*=\Phi(nD_1n^*)\subseteq D_2$, and similarly $\Phi(n)^*D_2\Phi(n)\subseteq D_2$. Hence $\Phi(n)\in \mathcal N_{A_2}(D_2)$. Define $\mathcal W(\Phi)([n,x]) = [\Phi(n),h^{-1}(x)]$. We verify that this is well defined. Suppose that $[n,x]=[m,x]$. By Lemma~\ref{lem:germ-criterion}, there exists $d\in D_1$ such that
$d(x)\neq0$ and $nd=md$. Applying $\Phi$, we obtain $\Phi(n)\Phi(d)=\Phi(m)\Phi(d)$. Since $\Phi(d)(h^{-1}(x))=d(x)\neq0$, Lemma~\ref{lem:germ-criterion} implies that $[\Phi(n),h^{-1}(x)] =[\Phi(m),h^{-1}(x)]$. Thus $\mathcal W(\Phi)$ is well defined.

It is straightforward to verify that $\mathcal W(\Phi)$ preserves range, source, multiplication and inversion, and is continuous with continuous inverse induced by $\Phi^{-1}$. Therefore $\mathcal W(\Phi): \mathcal G_{(A_1,D_1)} \to\mathcal G_{(A_2,D_2)}$ is an isomorphism of \'{e}tale groupoids.
\end{proof}

\begin{corollary}\label{cor:weyl-distinguishes}
Let $(A_1,D_1)$ and $(A_2,D_2)$ be $C^*$-diagonal pairs.
If their Weyl groupoids are not isomorphic, then the diagonal pairs are
not isomorphic.
\end{corollary}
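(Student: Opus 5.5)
The statement is the contrapositive of Proposition~\ref{prop:weyl-functorial}, so the proof will be a direct reduction to that result. Suppose, towards a contradiction, that the Weyl groupoids $\mathcal G_{(A_1,D_1)}$ and $\mathcal G_{(A_2,D_2)}$ are not isomorphic but the diagonal pairs are isomorphic. By an isomorphism of $C^*$-diagonal pairs we mean a $*$-isomorphism $\Phi:A_1\to A_2$ with $\Phi(D_1)=D_2$.

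Proposition~\ref{prop:weyl-functorial} then produces an isomorphism of étale groupoids
\[
\mathcal W(\Phi):\mathcal G_{(A_1,D_1)}\to\mathcal G_{(A_2,D_2)},\qquad [n,x]\mapsto[\Phi(n),h^{-1}(x)].
\]
Its inverse is the map induced in the same way by $\Phi^{-1}$. This contradicts the assumption that the Weyl groupoids are not isomorphic.

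Conditions (E) and (N) of Definition~\ref{def:morphism} play no role in this argument. The Weyl groupoid is built only from the inclusion $D\subseteq A$ and its normalizers. Moreover, $\Phi$ maps $\mathcal N_{A_1}(D_1)$ bijectively onto $\mathcal N_{A_2}(D_2)$, because $\Phi^{-1}$ also preserves normalizers.

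The only point that needs care is that Proposition~\ref{prop:weyl-functorial} really yields a \emph{homeomorphic} groupoid isomorphism, not just a bijection. Two facts settle this. First, $\mathcal W(\Phi^{-1})\circ\mathcal W(\Phi)=\mathrm{id}$ on germs, since $[\Phi^{-1}\Phi(n),hh^{-1}(x)]=[n,x]$. Second, $\mathcal W(\Phi)$ carries each basic bisection $\mathcal Z(n,V)$ to $\mathcal Z(\Phi(n),h^{-1}(V))$, which is again basic. These two facts give bicontinuity, and with them the corollary follows immediately.
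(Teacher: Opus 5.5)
Your proposal is correct and is essentially the paper's argument: the corollary is the contrapositive of Proposition~\ref{prop:weyl-functorial}. Your extra check that $\mathcal W(\Phi)$ is a homeomorphism, with inverse $\mathcal W(\Phi^{-1})$ and basic bisections $\mathcal Z(n,V)\mapsto\mathcal Z(\Phi(n),h^{-1}(V))$, simply fills in what the paper calls ``straightforward'' in the proof of that proposition.
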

\begin{proof}
Assume that $(A_1,D_1)\cong (A_2,D_2)$ as $C^*$-diagonal pairs. Then there exists an isomorphism $\Phi:(A_1,D_1)\to (A_2,D_2)$. By Proposition~\ref{prop:weyl-functorial},
$\Phi$ induces an isomorphism $\mathcal W(\Phi):\mathcal G_{(A_1,D_1)}\to\mathcal G_{(A_2,D_2)}$. Hence the Weyl groupoids are isomorphic. Therefore, if the Weyl groupoids are not isomorphic, the diagonal pairs cannot be isomorphic.
\end{proof}

\begin{remark}
The converse of Corollary~\ref{cor:weyl-distinguishes} is false in general: non-isomorphic Cartan pairs can have isomorphic Weyl groupoids (e.g., different twists over the same groupoid). However, for untwisted ample principal groupoids, the Weyl groupoid is a complete invariant, as shown in Theorem~\ref{thm:reconstruction} below.
\end{remark}

\begin{theorem}\label{thm:reconstruction}
Let $(A_1,D_1)$ and $(A_2,D_2)$ be untwisted ample C*-diagonal pairs with principal Weyl groupoids $\mathcal G_{(A_1,D_1)}$ and $\mathcal G_{(A_2,D_2)}$. If $\mathcal G_{(A_1,D_1)} \cong \mathcal G_{(A_2,D_2)}$ as topological groupoids, then $(A_1,D_1) \cong (A_2,D_2)$ as C*-diagonal pairs.
\end{theorem}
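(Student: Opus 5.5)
The plan is to pass through groupoid models on both sides and then functoriality of the reduced groupoid C*-algebra under isomorphisms of étale groupoids. Write $\mathcal G_i := \mathcal G_{(A_i,D_i)}$.

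\textbf{Step 1: realise each pair by its Weyl groupoid.} Kumjian's reconstruction theorem \cite[Theorem~3.1]{Kumjian} gives a twist $\Sigma_i$ over $\mathcal G_i$ and an isomorphism of pairs
\[
(D_i\subseteq A_i)\cong (C(\mathcal G_i^{(0)})\subseteq C_r^*(\mathcal G_i,\Sigma_i)).
\]
Untwistedness means $\Sigma_i$ is trivial, up to equivalence of twists. An equivalence of twists induces an isomorphism of twisted groupoid algebras which is the identity on $C(\mathcal G_i^{(0)})$. So we obtain diagonal-preserving isomorphisms
\[
\Theta_i:(D_i\subseteq A_i)\to (C(\mathcal G_i^{(0)})\subseteq C_r^*(\mathcal G_i)).
\]
Here we must check that the groupoid in Kumjian's theorem is the Weyl groupoid $\mathcal W(A_i,D_i)$ as defined through germs of normalizers. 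This is part of the uniqueness statement in Renault \cite[Theorem~5.9]{Renault1980}, and I would cite it rather than reprove it.

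\textbf{Step 2: transport the groupoid isomorphism to the algebras.} Let $\phi:\mathcal G_1\to\mathcal G_2$ be an isomorphism of topological groupoids. Then $\phi$ is automatically étale-compatible: it carries open bisections to open bisections and restricts to a homeomorphism $\phi^{(0)}$ of unit spaces. Define
\[
\phi_*:C_c(\mathcal G_1)\to C_c(\mathcal G_2),\qquad \phi_*(f)=f\circ\phi^{-1}.
\]
This map preserves convolution and involution, because $\phi$ preserves composable pairs and the fibres $\mathcal G^x$. It intertwines the regular representations $\pi_x$ and $\pi_{\phi(x)}$ through the unitary $\ell^2(\mathcal G_{1,x})\to\ell^2(\mathcal G_{2,\phi(x)})$ induced by $\phi$. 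Hence $\phi_*$ is isometric for the reduced norms and extends to a $*$-isomorphism $C_r^*(\mathcal G_1)\to C_r^*(\mathcal G_2)$. It maps $C(\mathcal G_1^{(0)})$ onto $C(\mathcal G_2^{(0)})$, and it commutes with the canonical expectations, namely restriction to the unit space. It also sends normalizers to normalizers, with $\phi_*^{-1}$ supplying the inverse, so conditions (D), (E) and (N) hold.

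\textbf{Step 3: compose.} Put $\Phi=\Theta_2^{-1}\circ\phi_*\circ\Theta_1$. This is a $*$-isomorphism $A_1\to A_2$ with $\Phi(D_1)=D_2$ and $E_{A_2}\circ\Phi=\Phi\circ E_{A_1}$. The expectation identity holds because each $\Theta_i$ intertwines $E_{A_i}$ with restriction to the unit space: the expectation onto a C*-diagonal is unique by the UEP. Therefore $(A_1,D_1)\cong(A_2,D_2)$ as C*-diagonal pairs.

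The main obstacle is Step 1: showing that an untwisted pair is isomorphic to $C_r^*(\mathcal G)$ for $\mathcal G$ its own Weyl groupoid, and not merely for some groupoid. Two things are needed. First, Kumjian's and Renault's groupoid must agree canonically with the germ groupoid used here. Second, triviality of the twist class must give an isomorphism fixing the diagonal. I would handle this by fixing a continuous section $\mathcal G_i\to\Sigma_i$ that is a groupoid homomorphism, which exists because the twist is trivial. This section turns $\Gamma_0$-sections of the line bundle into functions in $C_c(\mathcal G_i)$, and the resulting identification is visibly the identity on $C_0(\mathcal G_i^{(0)})$.
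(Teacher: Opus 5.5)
Your proposal is correct and follows essentially the same route as the paper. Both arguments identify each pair with $(C(\mathcal G_i^{(0)})\subseteq C_r^*(\mathcal G_i))$ via Kumjian's reconstruction and untwistedness, transport along $f\mapsto f\circ\phi^{-1}$ on $C_c$, and compose. Your version is in fact more careful than the paper's on three points it glosses over: matching Kumjian's groupoid with the germ Weyl groupoid, obtaining a trivialization that fixes the diagonal from a continuous homomorphic section, and getting the intertwining of expectations from uniqueness of the expectation under the UEP.
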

\begin{proof}
Let $\phi: \mathcal G_{(A_1,D_1)} \to \mathcal G_{(A_2,D_2)}$ be an isomorphism of topological groupoids.
By Kumjian's reconstruction theorem \cite[Theorem~3.1]{Kumjian}, an untwisted principal groupoid $\mathcal G$ gives rise to a C*-algebra $C_r^*(\mathcal G)$ with a diagonal subalgebra $C_0(\mathcal G^{(0)})$. Moreover, the isomorphism $\phi$ induces an isomorphism $\Phi: C_r^*(\mathcal G_{(A_1,D_1)}) \to C_r^*(\mathcal G_{(A_2,D_2)})$ defined on the dense subalgebra $C_c(\mathcal G_{(A_1,D_1)})$ by $\Phi(f)(\gamma) = f(\phi^{-1}(\gamma))$. This map is an isometric *-isomorphism because $\phi$ is a homeomorphism preserving the canonical counting Haar systems (in the étale setting, the Haar system is the counting measure on each fibre, which is preserved by groupoid isomorphisms). It satisfies $\Phi(C_0(\mathcal G_{(A_1,D_1)}^{(0)})) = C_0(\mathcal G_{(A_2,D_2)}^{(0)})$ since $\phi$ restricts to a homeomorphism of the unit spaces.

Since $(A_i,D_i)$ are untwisted ample C*-diagonal pairs, Kumjian's reconstruction theorem \cite[Theorem~3.1]{Kumjian} yields isomorphisms $\psi_i: A_i \to C_r^*(\mathcal G_{(A_i,D_i)})$ with $\psi_i(D_i) = C_0(\mathcal G_{(A_i,D_i)}^{(0)})$. Note that for untwisted pairs, the twist is trivial and the C*-algebra is the reduced groupoid C*-algebra. Composing, we obtain an isomorphism
$\psi_2^{-1} \circ \Phi \circ \psi_1: A_1 \to A_2$ that sends $D_1$ onto $D_2$ and intertwines the conditional expectations (because $\Phi$ preserves the restriction map to the unit space). Hence $(A_1,D_1) \cong (A_2,D_2)$ as C*-diagonal pairs.
\end{proof}

\begin{lemma}\label{lem:grpdpart-isomorphism}
Let $(\mathcal{K}, \rho, h): \mathcal{G}_1 \to \mathcal{G}_2$ be an isomorphism in $\mathsf{Grpd}_{\mathrm{part}}$. Then $\mathcal{K} = \mathcal{G}_1$ and $\rho: \mathcal{G}_1 \to \mathcal{G}_2$ is an isomorphism of topological groupoids.
\end{lemma}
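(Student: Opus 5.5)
Let $(\mathcal K',\rho',h'):\mathcal G_2\to\mathcal G_1$ be an inverse of $(\mathcal K,\rho,h)$, so that both composites equal the identity morphisms. The plan is to read off everything from the composition formula in the definition of $\mathsf{Grpd}_{\mathrm{part}}$ together with the description of the identity morphism $(\mathcal G,\mathrm{id}_{\mathcal G},\mathrm{id}_{\mathcal G^{(0)}})$.

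First consider $(\mathcal K',\rho',h')\circ(\mathcal K,\rho,h)=\mathrm{id}_{\mathcal G_1}$. By the definition of composition, its first component is $\rho^{-1}(\mathcal K'\cap\rho(\mathcal K))$, which is a subset of $\mathcal K$. Equality with $\mathrm{id}_{\mathcal G_1}$ means this equals $\mathcal G_1$. Hence $\mathcal G_1\subseteq\mathcal K\subseteq\mathcal G_1$, so $\mathcal K=\mathcal G_1$. We also obtain $\rho(\mathcal G_1)\subseteq\mathcal K'$ and $\rho'\circ\rho=\mathrm{id}_{\mathcal G_1}$. Symmetrically, the composite $(\mathcal K,\rho,h)\circ(\mathcal K',\rho',h')=\mathrm{id}_{\mathcal G_2}$ gives $\mathcal K'=\mathcal G_2$, $\rho'(\mathcal G_2)\subseteq\mathcal K=\mathcal G_1$, and $\rho\circ\rho'=\mathrm{id}_{\mathcal G_2}$. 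Thus $\rho:\mathcal G_1\to\mathcal G_2$ is a bijection with two-sided inverse $\rho'$.

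Each of $\rho,\rho'$ is a continuous groupoid homomorphism by axiom (ii), so $\rho$ is a homeomorphism and an algebraic groupoid isomorphism. This is exactly an isomorphism of topological groupoids. Openness onto the image (now all of $\mathcal G_2$) gives the same conclusion directly. Compatibility with units follows from axiom (iv) and $h'\circ h=\mathrm{id}$: $\rho$ restricts to the homeomorphism $h$ on $\mathcal G_1^{(0)}$.

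The only delicate point is making sure that equality of morphisms in $\mathsf{Grpd}_{\mathrm{part}}$ means equality of all three components of the triple, in particular equality of the domains $\mathcal K$. This is implicit in how the category is defined (morphisms are triples). Granting that, the domain-shrinking nature of composition forces the domains to be full. I would also remark that injectivity of $\rho$ is used only implicitly, since $\rho^{-1}(\rho(\mathcal K))=\mathcal K$ already holds for any map.
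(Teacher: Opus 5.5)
Your proposal is correct and follows essentially the same route as the paper. In both, the domain $\rho^{-1}(\mathcal K'\cap\rho(\mathcal K))\subseteq\mathcal K$ of the composite is forced to be $\mathcal G_1$, giving $\mathcal K=\mathcal G_1$ (and symmetrically $\mathcal K'=\mathcal G_2$), so $\rho$ is a bijective continuous homomorphism with inverse $\rho'$. The only cosmetic difference is that you obtain the homeomorphism from continuity of $\rho'$, while the paper uses openness of $\rho$ onto its image; both work.
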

\begin{proof}
Since $(\mathcal{K}, \rho, h)$ is an isomorphism, there exists an inverse morphism $(\mathcal{L}, \sigma, k): \mathcal{G}_2 \to \mathcal{G}_1$ such that
\[
(\mathcal{L}, \sigma, k) \circ (\mathcal{K}, \rho, h) = (\mathcal{G}_1, \mathrm{id}_{\mathcal{G}_1}, \mathrm{id}_{\mathcal{G}_1^{(0)}})
\]
and
\[
(\mathcal{K}, \rho, h) \circ (\mathcal{L}, \sigma, k) = (\mathcal{G}_2, \mathrm{id}_{\mathcal{G}_2}, \mathrm{id}_{\mathcal{G}_2^{(0)}}).
\]

Recall the composition law in $\mathsf{Grpd}_{\mathrm{part}}$: for morphisms $(\mathcal{K}_1, \rho_1, h_1): \mathcal{H} \to \mathcal{G}$ and $(\mathcal{K}_2, \rho_2, h_2): \mathcal{G} \to \mathcal{F}$, their composition is $(\mathcal{K}, \rho_2 \circ \rho_1|_{\mathcal{K}}, h_2 \circ h_1)$ where
\[
\mathcal{K} = \rho_1^{-1}(\mathcal{K}_2 \cap \rho_1(\mathcal{K}_1)) \subseteq \mathcal{H}.
\]

Applying this to the composition $(\mathcal{L}, \sigma, k) \circ (\mathcal{K}, \rho, h)$, we obtain:
\[
\mathcal{K}' = \rho^{-1}(\mathcal{L} \cap \rho(\mathcal{K})) \subseteq \mathcal{G}_1,
\]
and the composition equals $(\mathcal{K}', \sigma \circ \rho|_{\mathcal{K}'}, k \circ h)$.

By the isomorphism condition, this must equal $(\mathcal{G}_1, \mathrm{id}_{\mathcal{G}_1}, \mathrm{id}_{\mathcal{G}_1^{(0)}})$. Hence:

1. $\mathcal{K}' = \mathcal{G}_1$;
2. $\sigma \circ \rho|_{\mathcal{K}'} = \mathrm{id}_{\mathcal{K}'}$;
3. $k \circ h = \mathrm{id}_{\mathcal{G}_1^{(0)}}$.

Now observe that $\mathcal{K}' = \rho^{-1}(\mathcal{L} \cap \rho(\mathcal{K})) \subseteq \mathcal{K} \subseteq \mathcal{G}_1$. Since $\mathcal{K}' = \mathcal{G}_1$, we obtain
\[
\mathcal{G}_1 \subseteq \mathcal{K} \subseteq \mathcal{G}_1,
\]
hence $\mathcal{K} = \mathcal{G}_1$. With $\mathcal{K} = \mathcal{G}_1$, condition (2) becomes $\sigma \circ \rho = \mathrm{id}_{\mathcal{G}_1}$.

Similarly, composing in the reverse order gives $\mathcal{L} = \mathcal{G}_2$ and $\rho \circ \sigma = \mathrm{id}_{\mathcal{G}_2}$, together with $h \circ k = \mathrm{id}_{\mathcal{G}_2^{(0)}}$.

Thus $\rho: \mathcal{G}_1 \to \mathcal{G}_2$ is bijective, with inverse $\sigma$. By definition of morphisms in $\mathsf{Grpd}_{\mathrm{part}}$, $\rho$ is continuous and open onto its image. Since $\rho$ is continuous, bijective, and open onto its image, it follows that $\rho$ is a homeomorphism. Hence $\rho$ is an isomorphism of topological groupoids.
\end{proof}

\begin{lemma}\label{lem:diag-dim-invariance}
Let $(A_1,D_1)$ and $(A_2,D_2)$ be C*-diagonal pairs. If there is an isomorphism $\Phi: (A_1,D_1) \to (A_2,D_2)$ of C*-diagonal pairs, then
\[
\dim_{\mathrm{diag}}(D_1 \subset A_1) = \dim_{\mathrm{diag}}(D_2 \subset A_2).
\]
\end{lemma}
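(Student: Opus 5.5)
The lemma asserts that diagonal dimension is an invariant of isomorphisms of C*-diagonal pairs. I would prove it directly from the definition of $\dim_{\mathrm{diag}}$ (as in Li--Liao--Winter), which is phrased entirely in terms of the inclusion $D\subset A$: finite-dimensional approximations $A\to F\to A$ by completely positive maps, where $F$ carries a distinguished diagonal (masa) and the maps send the relevant diagonals and normalizers into diagonals and normalizers. The whole argument is transport of structure along $\Phi$. I would not route it through the Weyl groupoid and $\operatorname{dad}$ via \cite[Theorem~E]{LiLiaoWinter2023}. That route would require principality and finiteness hypotheses which the lemma does not assume.

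\textbf{Step 1: the structure preserved by $\Phi$.} Since $\Phi$ is an isomorphism of pairs, $\Phi(D_1)=D_2$. By the argument of Proposition~\ref{prop:weyl-functorial}, $\Phi(\mathcal N_{A_1}(D_1))\subseteq\mathcal N_{A_2}(D_2)$, and applying the same argument to $\Phi^{-1}$ gives equality. Moreover, $\Phi$ and $\Phi^{-1}$ are unital $*$-homomorphisms, hence completely positive, contractive and order zero. They therefore preserve every property that the definition imposes on the approximating maps.

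\textbf{Step 2: transport of approximations.} Suppose $\dim_{\mathrm{diag}}(D_1\subset A_1)\le n$. Take a finite set $\mathcal F\subset A_2$ (or a finite set of normalizers, as the definition requires) and $\varepsilon>0$. Pull back to $\Phi^{-1}(\mathcal F)\subset A_1$ and choose an $n$-decomposable diagonal approximation $(F,\psi,\varphi)$ for $\Phi^{-1}(\mathcal F)$ within $\varepsilon$. Then $(F,\psi\circ\Phi^{-1},\Phi\circ\varphi)$ is an approximation for $\mathcal F$. It approximates to the same $\varepsilon$ because $\Phi$ is isometric. Each of its conditions holds by Step 1: the diagonal of $F$ is mapped into $D_2$, normalizers go to normalizers, and the order-zero decomposition into $n+1$ summands is kept. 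Hence $\dim_{\mathrm{diag}}(D_2\subset A_2)\le n$. Applying the same argument to $\Phi^{-1}$ gives the reverse inequality, and the case of infinite dimension follows as well.

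\textbf{Main obstacle.} The only delicate point is confirming that every clause of the definition of $\dim_{\mathrm{diag}}$ is phrased in terms preserved by a diagonal-preserving $*$-isomorphism. This includes the clause saying that the upward map sends $D$ into the diagonal of $F$ and normalizers to normalizers, which needs $\Phi^{-1}(D_2)=D_1$. I would check these clauses one at a time against Step 1. If some clause also refers to the conditional expectation, it is handled by condition~(E) for $\Phi$. As a sanity check, in the principal case with finite $\operatorname{dad}$ the conclusion also follows from Proposition~\ref{prop:weyl-functorial} combined with \cite[Theorem~E]{LiLiaoWinter2023}, because $\operatorname{dad}$ is an invariant of groupoid isomorphism.
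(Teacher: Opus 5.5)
Your proposal is correct and is essentially the paper's proof. You transport the approximations along $\Phi$ as $(F,\psi\circ\Phi^{-1},\Phi\circ\varphi)$ for the pulled-back finite set, and obtain the reverse inequality by applying the same argument to $\Phi^{-1}$. Your clause-by-clause check is, if anything, more faithful to the Li--Liao--Winter definition than the paper's loose paraphrase in terms of ``approximately preserving the diagonal and the conditional expectation''. That check covers the order-zero decomposition, the condition $\psi(D_1)\subseteq D_F$ on the downward map (not the upward map, as your last paragraph says), and normalizers going to normalizers under the upward map.
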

\begin{proof}
Let \(d = \dim_{\mathrm{diag}}(D_1 \subset A_1)\). By definition, for every finite set \(\mathcal F \subset A_1\) and \(\epsilon > 0\), there exist a finite-dimensional C*-algebra \(F\) and completely positive contractive maps \(\psi: A_1 \to F\) and \(\phi: F \to A_1\) such that \(\|\phi(\psi(a)) - a\| < \epsilon\) for all \(a \in \mathcal F\), with \(\phi\) approximately preserving the diagonal and the conditional expectation. Applying the isomorphism \(\Phi\), we obtain maps
\[
\psi' = \psi \circ \Phi^{-1}: A_2 \to F, \qquad \phi' = \Phi \circ \phi: F \to A_2.
\]
Then for any finite set \(\mathcal F' \subset A_2\) and \(\epsilon > 0\), choosing \(\mathcal F = \Phi^{-1}(\mathcal F')\) and applying the same approximation gives
\[
\|\phi'(\psi'(a)) - a\| = \|\Phi(\phi(\psi(\Phi^{-1}(a)))) - a\| < \epsilon
\]
for all \(a \in \mathcal F'\). The diagonal and expectation properties are preserved because \(\Phi\) is an isomorphism of C*-diagonal pairs. Hence \(d\) is also a valid diagonal dimension for \((A_2,D_2)\), so \(\dim_{\mathrm{diag}}(D_2 \subset A_2) \leq d\). The reverse inequality follows by symmetry. Thus the dimensions are equal.
\end{proof}

Kopsacheilis and Winter \cite{KopsacheilisWinter2025} constructed,
for every $n \in \mathbb N \cup \{0\}$, a diagonal pair
\[
(D^{(n)} \subset M_{2^\infty})
\]
inside the CAR algebra $M_{2^\infty}$.

\begin{theorem}\label{thm:exotic-distinguished}
For every $n,m \in \mathbb N \cup \{0\}$, the following hold:
\begin{enumerate}
    \item[(i)] $\dim_{\mathrm{diag}}(D^{(n)} \subset M_{2^\infty}) = n$;
    \item[(ii)] if $n\neq m$, then $(D^{(n)} \subset M_{2^\infty}) \not\cong (D^{(m)} \subset M_{2^\infty})$ as $C^*$-diagonal pairs;
    \item[(iii)] if $n\neq m$, then $\mathcal W(D^{(n)} \subset M_{2^\infty}) \not\cong \mathcal W(D^{(m)} \subset M_{2^\infty})$ as Weyl groupoids.
\end{enumerate}
\end{theorem}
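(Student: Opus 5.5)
The plan is to treat the three parts in order, with (i) imported from the construction and (ii), (iii) deduced from invariance results already established in this section.

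For (i), I will not recompute the diagonal dimension. I will cite the main theorem of \cite{KopsacheilisWinter2025}. Their diagonal $D^{(n)}$ is built so that its Weyl groupoid $\mathcal G^{(n)}$ is principal, ample, second countable and Hausdorff with Cantor unit space, and has dynamic asymptotic dimension exactly $n$. By \cite[Theorem~E]{LiLiaoWinter2023}, used exactly as in Corollary~\ref{cor:product-dimension}, this gives $\dim_{\mathrm{diag}}(D^{(n)}\subset M_{2^\infty})=\operatorname{dad}(\mathcal G^{(n)})=n$. If their paper states the diagonal dimension directly, I will simply quote that value.

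For (ii), I argue by contraposition using Lemma~\ref{lem:diag-dim-invariance}. Suppose $\Phi:(D^{(n)}\subset M_{2^\infty})\to(D^{(m)}\subset M_{2^\infty})$ is an isomorphism of $C^*$-diagonal pairs. Then the two diagonal dimensions agree, so part (i) gives $n=m$.

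For (iii), there are two routes, and I will present the second as primary.
\begin{itemize}
\item \emph{Via (ii).} If the Weyl groupoids were isomorphic, I would want Theorem~\ref{thm:reconstruction} to give an isomorphism of the pairs, contradicting (ii). That requires the pairs to be untwisted with principal Weyl groupoids. Principality holds by the construction, but untwistedness is the step I expect to be the main obstacle. I would try to see it directly: the Kopsacheilis--Winter diagonals arise from a groupoid model $C_r^*(\mathcal G^{(n)})\cong M_{2^\infty}$ with no cocycle, since they come from crossed products or inductive limits of untwisted ample groupoids.
\item \emph{Via dynamic asymptotic dimension.} This avoids the twist question entirely. An isomorphism of topological groupoids $\mathcal G^{(n)}\cong\mathcal G^{(m)}$ preserves $\operatorname{dad}$, because $\operatorname{dad}$ is defined purely in terms of the topological groupoid structure (open covers of compact subsets of the unit space and the subgroupoids they generate). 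So $n=\operatorname{dad}(\mathcal G^{(n)})=\operatorname{dad}(\mathcal G^{(m)})=m$.
\end{itemize}

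The route through $\operatorname{dad}$ is cleaner. Its one delicate point is confirming that the Weyl groupoid meant by $\mathcal W(D^{(n)}\subset M_{2^\infty})$ is the groupoid whose $\operatorname{dad}$ Kopsacheilis and Winter computed. This follows from Renault's uniqueness of the Weyl groupoid of a Cartan pair together with Proposition~\ref{prop:weyl-functorial}.
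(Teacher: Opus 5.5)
Your parts (i) and (ii) match the paper: (i) is quoted from \cite{KopsacheilisWinter2025}, and (ii) follows from Lemma~\ref{lem:diag-dim-invariance} together with (i).

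For (iii), the paper takes exactly your first bullet. It combines principality of the Weyl groupoids with Theorem~\ref{thm:reconstruction}, so that a groupoid isomorphism would give an isomorphism of pairs, contradicting (ii). But Theorem~\ref{thm:reconstruction} is stated only for \emph{untwisted} pairs, and the paper's argument cites only principality. Untwistedness of the Kopsacheilis--Winter diagonals is therefore used without justification, which is precisely the obstacle you identified.

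Your primary route is different and more robust. Invariance of $\operatorname{dad}$ under isomorphism of topological groupoids also covers isomorphisms in $\mathsf{Grpd}_{\mathrm{part}}$, by Lemma~\ref{lem:grpdpart-isomorphism}. It avoids (ii), the reconstruction theorem and the twist altogether.

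Its price is a stronger input: you need $\operatorname{dad}$ of the Weyl groupoid itself to equal $n$, not merely $\dim_{\mathrm{diag}}=n$. If the construction paper records only the diagonal dimension, there is a catch in going back through \cite[Theorem~E]{LiLiaoWinter2023}. In the form quoted in Corollary~\ref{cor:product-dimension}, that theorem presupposes finite $\operatorname{dad}$, so you must extract at least finiteness of $\operatorname{dad}(\mathcal G^{(n)})$ from the construction. The identification of their model groupoid with $\mathcal W(D^{(n)}\subset M_{2^\infty})$, via Renault's uniqueness of the Weyl groupoid and Proposition~\ref{prop:weyl-functorial}, works as you describe.

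The paper's route, by contrast, needs only the value of $\dim_{\mathrm{diag}}$ from (i), but it is complete only once untwistedness of these diagonals is established.
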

\begin{proof}
Part (i) is the main construction theorem of Kopsacheilis--Winter \cite[Theorem 3.4]{KopsacheilisWinter2025}.

(ii) By Lemma~\ref{lem:diag-dim-invariance}, diagonal dimension is an invariant of diagonal pairs under isomorphism. Hence \(n = \dim_{\mathrm{diag}}(D^{(n)}) = \dim_{\mathrm{diag}}(D^{(m)}) = m\), so \(n=m\). Thus distinct \(n,m\) yield non-isomorphic diagonal pairs.

(iii) Suppose the Weyl groupoids were isomorphic. Since the Weyl groupoids are principal (as shown in \cite{KopsacheilisWinter2025}), Theorem~\ref{thm:reconstruction} would imply the diagonal pairs are isomorphic, contradicting part (ii). Hence the Weyl groupoids are pairwise non-isomorphic.
\end{proof}

\section{Examples}\label{sec:examples}

We conclude by recording several examples illustrating the scope of the preceding constructions.

\begin{example}
Let $E=(E^0,E^1,r,s)$ be a row-finite directed graph satisfying Condition~(L). Then the graph groupoid $\mathcal G_E$ is ample and topologically principal, and hence the canonical inclusion $(D_E \subset C^*(E))$ is an untwisted ample $C^*$-diagonal pair \cite{Kumjian,Renault1980}. Recall that the diagonal algebra is given by $D_E=\overline{\operatorname{span}}\{s_\mu s_\mu^*:\mu\in E^*\}$, where $E^*$ denotes the set of finite paths in $E$. Under the groupoid identification $C^*(E)\cong C_r^*(\mathcal G_E)$, the diagonal corresponds to the commutative algebra $D_E \cong C_0(\mathcal G_E^{(0)})$.

Now let $H\subseteq E^0$ be a saturated hereditary subset. By the standard ideal theory of graph $C^*$-algebras \cite{BatesPaskRaeburnSzymanski2002,Raeburn2005}, there is a gauge-invariant ideal $I_H \triangleleft C^*(E)$ generated by the vertex projections $\{p_v:v\in H\}$. Explicitly,
\[
I_H = \overline{\operatorname{span}} \{s_\mu s_\nu^*: r(\mu)=r(\nu)\in H\}.
\]

The quotient graph $E\setminus H$ is obtained by removing the vertices in $H$ together with all incident edges. The quotient algebra satisfies $C^*(E)/I_H \cong C^*(E\setminus H)$, and the quotient diagonal is naturally identified with the canonical diagonal of the quotient graph $D_E/(D_E\cap I_H)\cong D_{E\setminus H}$. From the groupoid correspondence, the quotient graph algebra is naturally identified with a reduction of the graph groupoid:
\[
C^*(E\setminus H) \cong C_r^*(\mathcal G_E|_Y),
\]
for a suitable invariant subset $Y\subseteq \mathcal G_E^{(0)}$. Since graph groupoids are naturally untwisted ample étale groupoids, their reductions remain untwisted and ample. Moreover, if the quotient graph $E\setminus H$ also satisfies Condition~(L) (see \cite[Proposition~3.2]{BatesPaskRaeburnSzymanski2002} for conditions under which this holds), then the reduced groupoid remains topologically principal. Therefore,
\[
(D_{E\setminus H}\subset C^*(E\setminus H))
\]
is again an untwisted ample $C^*$-diagonal pair. Thus quotients of graph Cartan pairs by gauge-invariant ideals associated to saturated hereditary subsets yield untwisted ample diagonal pairs whenever the quotient graph retains Condition~(L).
\end{example}

\begin{example}
Let $X$ be a compact totally disconnected Hausdorff space, and let $\alpha:\mathbb Z\curvearrowright X$ be a free minimal action. Consider the crossed product $A=C(X)\rtimes_\alpha \mathbb Z$ with its canonical diagonal $D=C(X)$. The transformation groupoid associated to the action is $\mathcal G = X\rtimes_\alpha \mathbb Z$, whose elements are pairs $(x,n)\in X\times \mathbb Z$ with
\[
s(x,n)=x, \qquad r(x,n)=\alpha^n(x).
\]

Since $X$ is totally disconnected, $\mathcal G$ is an ample Hausdorff \'{e}tale groupoid. Moreover, freeness of the action implies that the transformation groupoid is principal: indeed, if $(x,n)\in \mathcal G^x_x$, then $\alpha^n(x)=x$, and freeness forces $n=0$. Hence every isotropy group is trivial. By Kumjian's reconstruction theorem \cite[Theorem~3.1]{Kumjian},
\[
(C(X)\subset C(X)\rtimes_\alpha \mathbb Z)
\]
is therefore an untwisted ample $C^*$-diagonal pair. Now let $U\subseteq X$ be an open invariant subset. Then $I:=C_0(U)\rtimes_\alpha \mathbb Z$ is an ideal of $A$. Since $U$ is open, $X\setminus U$ is closed and hence locally compact. By Lemma~\ref{lem:quotient-id}, $A/I\cong C_r^*(\mathcal G|_{X\setminus U})$, and the quotient diagonal is naturally identified with $D/(D\cap I)\cong C(X\setminus U)$. Furthermore, by Lemma~\ref{lem:quotient-Weyl}, the Weyl groupoid of the quotient pair $(D/(D\cap I)\subset A/I)$ is naturally isomorphic to the reduction $\mathcal G|_{X\setminus U}$. Since reductions of principal ample groupoids remain principal and ample, the reduced groupoid $\mathcal G|_{X\setminus U}$ is again an ample principal étale groupoid. Therefore, $(D/(D\cap I)\subset A/I)$ is again an untwisted ample $C^*$-diagonal pair. Thus quotients of crossed-product Cartan pairs by invariant open subsets preserve the untwisted diagonal structure.
\end{example}

\begin{example}
Sibbel and Winter \cite{SibbelWinter2024} recently constructed a C*-diagonal \(D^{\otimes\infty} \subset \mathcal{O}_2\) whose spectrum is homeomorphic to the Cantor space. Their construction proceeds via crossed products by minimal homeomorphisms on Cantor spaces and infinite tensor products, ultimately relying on Kirchberg's \(\mathcal{O}_2\)-absorption theorem. This provides a natural test case for the functorial techniques developed in Section~\ref{sec:partial}: any \(*\)-homomorphism from another C*-diagonal pair into \(\mathcal{O}_2\) that restricts to an isomorphism on diagonals would induce a groupoid morphism as in Theorem~\ref{thm:partial-groupoid} and functorially in Theorem~\ref{thm:weyl-functor}. Moreover, the methods of Theorem~\ref{thm:geom-ideals} could potentially be applied to analyze quotients of this diagonal.
\end{example}

\begin{example}
Let $(D \subset A)$ be an ample $\Cstar$-diagonal pair. For any UHF algebra $M_{\mathfrak{s}}$ with canonical diagonal $D_{\mathfrak{s}}$, Proposition~\ref{prop:product-weyl} guarantees that $(D \otimes D_{\mathfrak{s}} \subset A \otimes M_{\mathfrak{s}})$ is again an ample $\Cstar$-diagonal pair. This construction is used in \cite{KopsacheilisWinter2025} and \cite{EvingtonSibbel2025} to produce exotic diagonals in the CAR algebra and Cuntz algebras, respectively. By Corollary~\ref{cor:product-dimension}, the diagonal dimension satisfies $\dim_{\mathrm{diag}}(D \otimes D_{\mathfrak{s}} \subset A \otimes M_{\mathfrak{s}}) \leq \dim_{\mathrm{diag}}(D \subset A)$ because $\dim_{\mathrm{diag}}(D_{\mathfrak{s}} \subset M_{\mathfrak{s}}) = 0$.
\end{example}

\begin{example}\label{ex:car-exotic}
Kopsacheilis and Winter~\cite{KopsacheilisWinter2025} constructed, for each \(n \in \{0,1,2,\ldots,\infty\}\), a C*-diagonal \((D^{(n)} \subset M_{2^\infty})\) with Cantor spectrum and diagonal dimension exactly \(n\).
In particular, they exhibited a non-AF diagonal in \(M_{2^\infty}\).
Since \(M_{2^\infty}\) is nuclear and \((D^{(n)} \subset M_{2^\infty})\) is a Cartan pair, after identifying this Cartan pair via Renault's reconstruction theorem, Takeishi's characterization implies that the associated Weyl groupoid is amenable (see \cite[Theorem~4.1]{Renault} and \cite[Theorem~1.1]{Takeishi2017}). Thus, while one might have hoped that AF diagonals are characterized by amenability of the Weyl groupoid, these examples show that amenable Weyl groupoids can arise from non-AF diagonals.
Thus amenability of the Weyl groupoid does **not** characterize AF diagonals.
The family \(\{(D^{(n)} \subset M_{2^\infty})\}_{n=0}^\infty\) further demonstrates that diagonal dimension can distinguish non-conjugate diagonals.
\end{example}

\section{Further questions}\label{sec:open}

We conclude with several questions suggested by the preceding results.

\begin{problem}
Characterise exactly which inverse semigroup homomorphisms between normaliser semigroups arise from a morphism \(\Phi\) with \(\Phi|_D\) an isomorphism.
\end{problem}

\begin{problem}
Given a continuous open groupoid homomorphism \(\rho:\mathcal{H}\to\mathcal{G}_A\) where \(\mathcal{H}\) is an open subgroupoid of an effective ample Weyl groupoid \(\mathcal{G}_B\), find conditions under which there exists a morphism \(\Phi:A\to B\) inducing \(\rho\).
\end{problem}

\begin{problem}
Give a complete characterisation of expectation‑compatible ideals in terms of the geometry of the groupoid, allowing for possibly twisted groupoids.
\end{problem}

\begin{problem}\label{prob:transfer-diagonal-comparison}
Kopsacheilis and Winter~\cite{KW} characterized diagonal comparison for a single ample C*-diagonal pair. Can the functorial map \(\rho_\Phi\) of Theorem~\ref{thm:partial-groupoid} and the functor \(\mathcal{W}\) of Theorem~\ref{thm:weyl-functor} be used to prove that diagonal comparison is preserved under surjective morphisms satisfying (D), (E), (N)? More precisely, if \(\Phi:(A,D)\to(B,C)\) is a surjective morphism and \((A,D)\) has diagonal comparison, does \((B,C)\) also have diagonal comparison?
\end{problem}

\begin{problem}
Does the symmetric monoidal structure on \(\mathcal{C}_{\mathrm{amp}}^{\mathrm{iso}}\) from Proposition~\ref{prop:monoidal} interact with the functor \(\mathcal{W}: \mathcal{C}_{\mathrm{amp}}^{\mathrm{iso}} \to \mathsf{Grpd}_{\mathrm{part}}\) to give a monoidal functor? That is, is there a natural isomorphism \(\mathcal{W}((A_1,D_1) \otimes (A_2,D_2)) \cong \mathcal{W}(A_1,D_1) \times \mathcal{W}(A_2,D_2)\) in \(\mathsf{Grpd}_{\mathrm{part}}\)?
\end{problem}

\section*{Data Availability Statement}
This article has no associated data.

\section*{Funding}
No funding was received for this manuscript.

\section*{Author Contributions}
The author read and approved the final manuscript.

\section*{Conflicts of Interest}
The author declares no conflicts of interest.

\end{document}